\theoremstyle{plain}
\newtheorem{lemma}{Lemma}[section]
\newtheorem{thm}[lemma]{Theorem}
\newtheorem{coro}[lemma]{Corollary}
\newtheorem{remark}[lemma]{Remark}
\newtheorem{prop}[lemma]{Proposition}
\newtheorem{assumption}{Assumption}
\newcommand{\rmd}{\mathrm d}
\newcommand{\bbE}{\mathbb E}
\newcommand{\bbH}{\mathbb H}
\newcommand{\bfi}{\mathbf i}
\newcommand{\OO}{\mathcal O}
\newcommand{\HH}{\mathcal H}
\newcommand{\<}{\langle}
\renewcommand{\>}{\rangle}
\begin{document}
\title[An adaptive time-stepping fully discrete scheme]{
An adaptive time-stepping fully discrete scheme for stochastic NLS  equation: Strong convergence and numerical asymptotics
}
\author{Chuchu Chen, Tonghe Dang, Jialin Hong}
\address{LSEC, ICMSEC,  Academy of Mathematics and Systems Science, Chinese Academy of Sciences, Beijing 100190, China,
\and 
School of Mathematical Sciences, University of Chinese Academy of Sciences, Beijing 100049, China}
\email{chenchuchu@lsec.cc.ac.cn; dangth@lsec.cc.ac.cn; hjl@lsec.cc.ac.cn}
\thanks{This work is funded by the National key R\&D Program of China under Grant (No. 2020YFA0713701), National Natural Science Foundation of China (No. 11971470,
No. 11871068, No. 12031020, No. 12022118), and by Youth Innovation Promotion Association CAS}
\begin{abstract}
In this paper, we 
propose and analyze an  adaptive time-stepping fully discrete scheme which possesses  the optimal  strong convergence order 
for the stochastic nonlinear Schr\"odinger equation with multiplicative noise. Based on the splitting skill and the adaptive strategy, the $\bbH^1$-exponential integrability of the numerical solution is obtained, which is a key ingredient to derive the  strong convergence order. We show that the proposed  scheme   converges strongly with orders $\frac12$
in time and $2$ in space.  
To investigate the numerical asymptotic behavior, we establish the large deviation principle for the numerical solution. 
This is the first result on  the study of the large deviation principle for the numerical scheme of stochastic partial differential equations with superlinearly growing drift. 
And as a byproduct, the error of the masses between the numerical and exact solutions is finally obtained. 

\end{abstract}
\keywords {Stochastic nonlinear Schr\"odinger equation $\cdot$ Adaptive time-stepping fully discrete scheme $\cdot$ Strong convergence $\cdot$ Large deviation}
\maketitle
\section{Introduction} 
 
The 
 stochastic nonlinear Schr\"odinger (NLS)  equation 
is widely used to model the propagation of nonlinear dispersive waves in non-homogeneous or random media, and has important  
applications in various fields such as quantum physics, plasma physics, optical fiber communications  and nonlinear optics (see e.g. \cite{NLSapp, CCCH16, Millet} and references therein). 
In this paper, we focus on the  numerical study 
of the following one-dimensional stochastic NLS equation with multiplicative noise of Stratonovich type
\begin{equation}\label{schrod}
\mathrm{d}u=(\mathbf{i}\Delta u+\mathbf{i}\lambda |u|^{2}u)\mathrm{d}t-\mathbf{i}\sqrt{\epsilon}u\circ \mathrm{d}W(t),\quad \text{in } (0,T]\times \mathcal{O}
\end{equation}
with the initial datum $u(0)=u_0\in L^2(\mathcal{O};\mathbb{C})=:\bbH$ and the homogenous Dirichlet boundary condition,
where $T>0,\;\mathcal{O}=(0,1),$ $\epsilon>0$ denotes the intensity of the noise,  
and $\lambda=1$ or $-1$ corresponds to the focusing or defocusing case, respectively. Here, $\{W(t):t\in[0,T]\}$ is a 
real-valued $Q$-Wiener process on a filtered probability space $(\Omega,\mathcal{F},\{\mathcal{F}_t\}_{t\in[0,T]},\mathbb{P}).$
 There exists an orthonormal basis $\{e_k\}_{k\in\mathbb{N}_+}$ of $L^2(\mathcal{O};\mathbb{R})$ and a sequence of mutually independent, real-valued Brownian motions $\{\beta_k\}_{k\in\mathbb{N}_+}$ such that $W(t)=\sum_{k=1}^{\infty}Q^{\frac{1}{2}}e_k\beta_k(t),\; t\in[0,T]$. 
 
Numerical analysis of stochastic NLS equation \eqref{schrod} has been studied in recent decades, 
 for instance, we refer to  \cite{CCC16} for the $\theta$-scheme, \cite{Debussche} for the Crank--Nicolson scheme, 
 \cite{Cui_2019} for the splitting Crank--Nicolson scheme,  
 \cite{CHW17} for  the modified implicit Euler scheme, and \cite{hong2017} for the multi-symplectic scheme.  These works are  drift-implicit type schemes, while their implementation requires solving an algebraic equation at each iteration step, which needs additional computational effort.  In this regard, it is worth investigating  explicit schemes, which are simple to implement and have lower complexity.  However, the explicit, the exponential and the linear-implicit Euler type schemes with a uniform timestep fail to converge for a stochastic partial differential equation (SPDE) with superlinearly growing drift; see \cite{NLSexp} for the stochastic NLS equation and \cite{beccari} 
 for the parabolic SPDE. To our knowledge, there are only a few works on the convergence analysis of the explicit scheme of the stochastic NLS equation.
  For instance, 
 in \cite{Liujie13}, the author constructs an explicit  splitting scheme in the temporal direction and obtains the convergence order in the probability sense.  
  The author in \cite{NLSexp} proposes a new  kind of explicit splitting  scheme, whose strong convergence order   is $\frac12-$ and $s-$ in the temporal and spatial direction with $Q^{\frac12}\in\mathcal{L}_2^s$ (in this case the solution has $\bbH^s$-regularity),  respectively. 
In order to construct a drift-explicit scheme, whose  strong convergence order is optimal, 
we apply the adaptive timestep skill to adapt the timestep size at each iteration. 
We refer to e.g.  \cite{lord,adapAC} for adaptive  schemes for 
 parabolic SPDEs with non-globally Lipschitz drift.  
To our knowledge, there has been no work on the study of the adaptive time-stepping scheme for the stochastic NLS equation. 
The main purpose of this paper is twofold: \begin{itemize}
\item[(\romannumeral1)] Propose 
a drift-explicit,  adaptive time-stepping fully discrete scheme  for \eqref{schrod}, whose strong convergence order is optimal. 
\item[(\romannumeral2)] Investigate the numerical asymptotic behavior   of the proposed scheme as $\epsilon\to0$ via the large deviation principle (LDP).
\end{itemize}

To be specific, in this work we propose an adaptive time-stepping fully discrete scheme, whose spatial direction is using the spectral Galerkin method, and temporal direction is based on the adaptive  splitting exponential Euler  scheme.
A key ingredient to derive the strong convergence order is the $\bbH^1$-exponential integrability of both the exact and numerical solutions. 
It is studied in \cite{Cui_2019} that the exact solution and the drift-implicit type scheme of the stochastic NLS equation can have this exponential integrability due to the preservation  of the mass of the solutions.
 The author in \cite{NLSexp} 
uses the splitting skill to split the stochastic NLS equation into a Hamiltonian subsystem and a mass-decaying linear  subsystem, so that the exponential integrability of the numerical solution is still possessed. We  remark that this type of exponential integrability also has important applications in  other problems, for instance the large deviation-type result (see e.g \cite[Corollary 3.2]{Cui_2016}).    
 To obtain the  exponential integrability of the drift-explicit,  adaptive time-stepping fully discrete scheme, we combine the splitting skill and the adaptive strategy for the proposed scheme to derive the a.s.-uniform boundedness of the mass of the numerical solution. Based on this $\bbH^1$-exponential integrability and the $\bbH^j\,(j=1,2)$-regularity estimates, it is shown that this fully discrete scheme is convergent with  strong orders 
$\frac{1}{2}$ in time and $2$ in space, which  are optimal in the sense that the orders coincide with the optimal temporal H\"older regularity and spatial Sobolev regularity, respectively.

To further study the asymptotic behavior of the proposed adaptive time-stepping fully discrete scheme, we establish the LDP for the numerical solution. The LDP for the SPDE with small noise is also called the Freidlin--Wentzell LDP, which characterizes the exponential decay probabilities that sample paths of the SPDE deviate from that of the corresponding deterministic equation as the intensity of the noise tends to zero, and has received much attention in recent years (see e.g. \cite{FQ, gautier2005, Gautier_2005}). 
A well-known approach proposed in \cite{Weakbook} to establish the LDP is the weak convergence method, which is by means of the equivalence to the Laplace principle.  
 To apply this
approach, the main difficulty lies in proving the compactness of solutions of the skeleton equation and the stochastic  controlled equation in the infinite-dimensional Banach space $\mathcal{C}([0,T];\bbH_N)$. In this regard, by analyzing the  conditional moment estimation of the solution of the 
stochastic  controlled equation, we prove that the  solution of the proposed fully discrete scheme  satisfies 
the LDP on $\mathcal{C}([0,T];\bbH_N)$ with the rate function given by the corresponding skeleton equation.
To our knowledge, this is the first work on the study of the LDP for the numerical scheme of SPDEs with superlinearly growing drift.
As a byproduct, the error of the masses between the numerical  and exact solutions of \eqref{schrod} is finally obtained.   

The outline of this paper is as follows. In the next section, we propose  the  adaptive time-stepping fully discrete scheme,  and prove the a.s.-uniform boundedness of the mass, the $\bbH^{j}\,(j=1,2)$-regularity estimates and the $\bbH^1$-exponential integrability of the numerical solution. In Section $3$, we derive the  optimal strong convergence order of the fully discrete scheme. Section $4$ is devoted to establishing  the LDP for the solution of the  fully discrete scheme.

 To close this section, we introduce some frequently used notations. The norm and the inner product of $\bbH=L^2(\mathcal{O};\mathbb{C})$ are denoted by $\|\cdot\|$ and $\<u,v\>:=\mathrm{Re}\big[\int_{\mathcal{O}}u(x)\bar{v}(x)\rmd x\big]$, respectively. Denote $L^p(\OO):=L^p(\OO;\mathbb{C}),\,1\leq p\leq \infty,$ $H:=L^2(\mathcal{O};\mathbb{R}).$ Let $H^s:=H^s(\mathcal{O})$ and $\bbH^s:=\bbH^s(\mathcal{O}), \,s\in \mathbb{R}$ denote the real-valued and complex-valued Sobolev spaces, respectively. Then the domain of the Dirichlet  Laplacian operator is $\bbH^1_0\cap\bbH^2.$ We denote the interpolation space of the Dirichlet negative Laplacian operator by $\dot{\bbH}^s,\,s\in\mathbb{R}$. It is known that $\bbH^s$ and $\dot{\bbH}^s$ are equivalent for $s=1,2.$  
 Throughout the paper, we assume that the initial datum $u_0\in\bbH^1_0\cap\bbH^2$ is a deterministic function, and that the operator $Q^{\frac{1}{2}}\in\mathcal{L}^2_2:=\mathcal{L}_2(H;H^2),$ i.e.,
$\|Q^{\frac{1}{2}}\|^2_{\mathcal{L}^2_2}:=\sum_{k=1}^{\infty}\|Q^{\frac{1}{2}}e_k\|^2_{H^2}<\infty.$ 
And hence $\|Q^{\frac{1}{2}}\|_{\mathcal{L}(H;H^2)}\leq \|Q^{\frac{1}{2}}\|_{\mathcal{L}^2_2}<\infty.$
In sequel, $C$ is a constant which may change from one line to another, and sometimes we write $C(a,b,c\ldots)$ to emphasize the dependence on the parameters $a,b,c,\ldots$

 \section{The adaptive time-stepping fully discrete scheme}\label{section2}
 In this section, we first introduce the adaptive time-stepping fully discrete scheme  of \eqref{schrod}. Then we prove the a.s.-uniform boundedness of the mass, the $\bbH^j\,(j=1,2)$-regularity estimates and the $\bbH^1$-exponential integrability of the numerical solution, which are important  in  the estimate of the strong convergence order of this fully discrete scheme. We remark that $\epsilon$ is a fixed positive parameter in this section and the next section, and we do not emphasize the dependence on $\epsilon$ of  solutions of the stochastic NLS equation and its discretizations.
 
 It is known that \eqref{schrod} has the following equivalent It\^o formulation 
\begin{align}\label{Itosense}
\mathrm{d}u=\big(\mathbf{i}\Delta u+\mathbf{i}\lambda |u|^{2}u-\frac{\epsilon}{2}F_Q u\big)\mathrm{d}t-\mathbf{i}\sqrt{\epsilon}u\mathrm{d}W(t),\quad \text{in }(0,T]\times \mathcal{O},
\end{align}
 where $F_Q:=\sum_{k=1}^{\infty}(Q^{\frac{1}{2}}e_k)^2.$ The well-posedness and $\bbH^j\,(j=1,2)$-regularity estimates for \eqref{Itosense} have been studied; see e.g. \cite{CCCH16, Cui_2016,  Debussche2, Debussche, Feng22}. 

It is known that the splitting skill can be used to construct convergent explicit numerical schemes for stochastic NLS equation; see e.g. \cite{Brehier, Liujie13, NLSexp}. 
 Introduce a partition $0=t_0<t_1<\cdots<t_{m}<\cdots<t_{M}=T$ with some $M\in\mathbb{N}_+.$ As is shown in \cite{Cui_2019}, one can split \eqref{Itosense} in the time interval $T_m:=[t_m,t_{m+1})$ into a deterministic NLS equation with random initial datum and a linear SPDE. Precisely, 
 for $t\in T_m,$
\begin{subequations}\label{u^D_tau_m(t)}
\begin{numcases}{}
\rmd u^D_{m}(t)=\bfi \Delta u^D_{m}(t)\rmd t+\bfi \lambda |u^D_{m}(t)|^2u^D_{m}(t)\rmd t,\quad u^D_{m}(t_m)=u^S_{m-1}(t_m),\vspace{1mm}\label{u^D_tau_m(t)_1}\\
\rmd u^S_{m}(t)=-\frac{\epsilon}{2}F_Qu^S_{m}(t)\rmd t-\bfi \sqrt \epsilon u^S_{m}(t)\rmd W(t),\quad u^S_{m}(t_m)=u^D_{m}(t_{m+1}),\label{u^D_tau_m(t)_2}
\end{numcases}
\end{subequations}
especially,  for $t\in T_0,$ the initial datum of \eqref{u^D_tau_m(t)_1} is $u^D_0(0)=u_0.$

Let $N\in\mathbb{N}_+,$ and let $\bbH_N$ be the subspace of $\bbH$ consisting of the first $N$ eigenvectors of the Dirichlet Laplacian operator. Denote by $P^N:\bbH\to \bbH_N$ the spectral Galerkin projection, which is defined by $\<P^Nu,v\>:=\<u,v\>$ for $u\in \bbH,v\in\bbH_N.$ 
Applying the spectral Galerkin method to \eqref{u^D_tau_m(t)} in  the spatial direction, we derive the semi-discrete scheme: For $t\in T_m,$
\begin{subequations}\label{semidiscrete}
\begin{numcases}{}
 \rmd u^{D,N}_{m}(t)=\bfi \Delta u^{D,N}_{m}(t)\rmd t+\bfi \lambda P^N|u^{D,N}_m(t)|^2u^{D,N}_m(t)\rmd t,\quad u^{D,N}_{m}(t_m)=u^{S,N}_{m-1}(t_m), \vspace{1mm}\label{semidiscrete1}\\
\rmd u^{S,N}_{m}(t)=-\frac{\epsilon}{2}P^NF_Qu^{S,N}_{m}(t)\rmd t-\bfi \sqrt \epsilon P^Nu^{S,N}_{m}(t)\rmd W(t),\quad u^{S,N}_{m}(t_m)=u^{D,N}_{m}(t_{m+1}),\label{semidiscrete2}
\end{numcases}
\end{subequations}
where the initial datum is $u^{D,N}_0=P^Nu_0.$ 

To present the adaptive time-stepping scheme, the timestep at each iteration must be adapted with some adaptive timestep function $\tau: \bbH\to\mathbb{R}_+$ to control the numerical solution from divergence. Thus the partition $\{t_m:m=0,\ldots,M\}$ of the split equation \eqref{u^D_tau_m(t)} and the semi-discrete scheme \eqref{semidiscrete} is chosen the same as the one will be used in the fully discrete scheme   \eqref{Itoform}. In this case,  to emphasize the dependence on $T$, we use $M_T$ instead of $M$ in the sequel.
 By further  applying the  adaptive exponential Euler scheme in the temporal direction of \eqref{semidiscrete1}, we obtain the fully discrete scheme, whose differential form reads as: 
\begin{subequations}\label{Itoform}
\begin{numcases}{}
\rmd u^{D,N}_{t,m}=\bfi \Delta u^{D,N}_{t,m}\rmd t+\bfi \lambda S^N(t-t_m)P^N|u^N_m|^2u^N_m\rmd t,\quad u^{D,N}_{t_m,m}=u^N_m,\vspace{1mm}\label{Itoform1}\\
\rmd u^{S,N}_{t,m}=-\frac{\epsilon}{2}P^NF_Qu^{S,N}_{t,m}\rmd t-\bfi \sqrt\epsilon P^Nu^{S,N}_{t,m}\rmd W(t),\quad u^{S,N}_{t_m,m}=u^{D,N}_{t_{m+1},m},\label{Itoform2}
\end{numcases}
\end{subequations}
where $t\in T_m,$  and $t_{m+1}=t_m+\tau_m$ with $\tau_m:=\tau(u^N_m)$. Here,  $S^N(t):=P^Ne^{\bfi t\Delta },$ $ u^N_{m+1}:=u^{S,N}_{t_{m+1},m},$ and the initial datum is $u^N_0=P^Nu_0.$   By \eqref{Itoform1}, we have the explicit one-step scheme for the deterministic part:
\begin{align}\label{onestep}u^{D,N}_{t_{m+1},m}=S^N(\tau_m)(u^N_m+\bfi \lambda |u^N_m|^2u^N_m\tau_m).\end{align} If we denote the flows of $u^{D,N}_{t,m}$ and $u^{S,N}_{t,m}$ by ${\Phi}^{D,N}_{m,t-t_m}$ and ${\Phi}^{S,N}_{m,t-t_m}$, respectively for  $t\in T_m$,   then the solution of the fully discrete scheme \eqref{Itoform} can be expressed as
\begin{align*}
u^N_{m}=u^{S,N}_{t_{m},m-1}=\prod_{j=0}^{m-1}({\Phi}^{S,N}_{j,\tau_j}{\Phi}^{D,N}_{j,\tau_j})u^N_0.
\end{align*}
We remark that if the existing time span is longer than $T$ after adding the last timestep, then we take a smaller timestep such that the existing time span just attains $T$ after adding it. Namely, if $t_{M_T-1}+\tau_{M_T-1}>T$, then we enforce the last timestep $\tau_{M_T-1}:=T-t_{M_T-1}.$ In the sequel, we will give some assumptions on the timestep function so that the numerical solution can attain $T$ with finite many timesteps (see Remark \ref{rmktau_min}). Without loss of generality, we take $1/0=\infty.$

\begin{assumption}\label{assump_step1}
Let $\tau_m$ satisfy 
\begin{align}
&\tau_m\leq \min\big\{L_1\|u^N_m\|^2\|u^N_m\|^{-6}_{L^6(\OO)},T\delta \big\}\qquad a.s.,\label{condition1}\\
&\tau_m\ge (\zeta \|u^N_m\|^{\beta}+\xi)^{-1}\delta\qquad a.s.\label{condition2}
\end{align}
with constants $L_1,\zeta,\beta,\xi>0$ and a small constant $\delta\in(0,1)$.
\end{assumption}

Below, we give the estimate of the mass $\|u^N_m\|^2$ of the solution of \eqref{Itoform}.  
Hereafter, we also use the notation 
$\underline{t}:=\max\{m:t_m\leq t\}$ to represent the maximal timestep number not exceeding $t$. 
\begin{lemma}\label{Hnorm}
Under Assumption \ref{assump_step1} \eqref{condition1},  it holds that
\begin{align*}
\sup_{t\in[0,T]}(\|u^{D,N}_{t,\underline t}\|^2\vee\|u^{S,N}_{t,\underline t}\|^2)\leq e^{L_1T}\|u^N_0\|^2\qquad a.s.
\end{align*}
\end{lemma}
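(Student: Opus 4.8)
The plan is to control the mass across one deterministic substep \eqref{Itoform1} and one stochastic substep \eqref{Itoform2}, and then iterate in $m$. Concretely, I would establish that for every $m$ and every $t\in T_m$,
\begin{equation*}
\|u^{D,N}_{t,m}\|^2\le e^{L_1\tau_m}\|u^N_m\|^2
\qquad\text{and}\qquad
\|u^{S,N}_{t,m}\|^2\le \|u^{S,N}_{t_m,m}\|^2=\|u^{D,N}_{t_{m+1},m}\|^2
\quad a.s.
\end{equation*}
Granting these, $u^N_{m+1}=u^{S,N}_{t_{m+1},m}$ gives $\|u^N_{m+1}\|^2\le e^{L_1\tau_m}\|u^N_m\|^2$, so by induction and $\sum_{j=0}^{m-1}\tau_j=t_m\le T$ one gets $\|u^N_m\|^2\le e^{L_1 t_m}\|u^N_0\|^2$ for every $m$. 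Then for arbitrary $t\in[0,T]$ one has $t\in T_{\underline t}$, whence $\|u^{D,N}_{t,\underline t}\|^2\le e^{L_1(t-t_{\underline t})}\|u^N_{\underline t}\|^2\le e^{L_1 t}\|u^N_0\|^2\le e^{L_1 T}\|u^N_0\|^2$ and, by the second inequality, $\|u^{S,N}_{t,\underline t}\|^2\le \|u^{D,N}_{t_{\underline t+1},\underline t}\|^2\le e^{L_1 T}\|u^N_0\|^2$, which is the assertion.

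For the first substep I would solve \eqref{Itoform1} on $T_m$ explicitly: by Duhamel's formula and the semigroup law $S^N(a)S^N(b)=S^N(a+b)$ on $\bbH_N$,
\begin{equation*}
u^{D,N}_{t,m}=S^N(t-t_m)\big(u^N_m+\bfi\lambda(t-t_m)P^N|u^N_m|^2u^N_m\big),\qquad t\in T_m,
\end{equation*}
which at $t=t_{m+1}$ is \eqref{onestep}. Since $e^{\bfi s\Delta}$ is unitary on $\bbH$ and commutes with $P^N$, the operator $S^N(t-t_m)$ is an isometry of $(\bbH_N,\|\cdot\|)$; expanding $\|u^N_m+\bfi\lambda(t-t_m)P^N|u^N_m|^2u^N_m\|^2$, the cross term $2\lambda(t-t_m)\langle u^N_m,\bfi P^N|u^N_m|^2u^N_m\rangle$ vanishes because $P^N$ is an orthogonal projection, $u^N_m\in\bbH_N$, and $\langle v,\bfi|v|^2v\rangle=\mathrm{Re}\big(-\bfi\int_{\OO}|v|^4\rmd x\big)=0$; while the quadratic term obeys $(t-t_m)^2\|P^N|u^N_m|^2u^N_m\|^2\le(t-t_m)^2\|u^N_m\|^6_{L^6(\OO)}$. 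Hence, using $t-t_m\le\tau_m$ and Assumption \ref{assump_step1} \eqref{condition1} in the form $\tau_m^2\|u^N_m\|^6_{L^6(\OO)}\le L_1\tau_m\|u^N_m\|^2$, one concludes $\|u^{D,N}_{t,m}\|^2\le(1+L_1\tau_m)\|u^N_m\|^2\le e^{L_1\tau_m}\|u^N_m\|^2$.

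For the second substep I would apply It\^o's formula to $t\mapsto\|u^{S,N}_{t,m}\|^2$ along \eqref{Itoform2}. The martingale part drops out: its coefficient against $\beta_k$ is $-2\sqrt\epsilon\langle u^{S,N}_{t,m},\bfi P^N(u^{S,N}_{t,m}Q^{\frac12}e_k)\rangle=-2\sqrt\epsilon\,\mathrm{Re}\big(-\bfi\int_{\OO}|u^{S,N}_{t,m}|^2 Q^{\frac12}e_k\,\rmd x\big)=0$, using that $P^N$ is self-adjoint, $u^{S,N}_{t,m}\in\bbH_N$ and $Q^{\frac12}e_k$ is real-valued. The drift, obtained from the $-\tfrac\epsilon2 F_Q$ term together with the It\^o correction, equals $-\epsilon\langle F_Q u^{S,N}_{t,m},u^{S,N}_{t,m}\rangle+\epsilon\sum_k\|P^N(u^{S,N}_{t,m}Q^{\frac12}e_k)\|^2$; since $\sum_k\|v Q^{\frac12}e_k\|^2=\langle F_Q v,v\rangle$ and $\|P^N\cdot\|\le\|\cdot\|$, it is $\le0$. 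Thus $t\mapsto\|u^{S,N}_{t,m}\|^2$ is non-increasing on $T_m$, which gives the second inequality. I expect this second substep to be the only genuinely structural point: it is exactly there that the Galerkin truncation must not spoil the Stratonovich-type (near-)conservation of mass, and it does not because inserting $P^N$ can only decrease $\sum_k\|P^N(u^{S,N}_{t,m}Q^{\frac12}e_k)\|^2$, so the $-\tfrac\epsilon2 F_Q$ dissipation still dominates it. Everything else is routine: the Duhamel representation, the pointwise cancellations $\langle v,\bfi|v|^2v\rangle=0$ and $\langle v,\bfi(vg)\rangle=0$ for real-valued $g$, and the harmless bookkeeping for the adapted partition $t_{m+1}=t_m+\tau(u^N_m)$ and the possibly shortened final step $\tau_{M_T-1}=T-t_{M_T-1}$, all of whose bounds are pathwise and monotone in the step size.
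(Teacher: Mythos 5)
Your proposal is correct and follows essentially the same route as the paper: the explicit Duhamel/one-step representation with the vanishing cross term $\langle v,\bfi|v|^2v\rangle=0$ and Assumption \ref{assump_step1} \eqref{condition1} give $\|u^{D,N}_{t,m}\|^2\le(1+L_1\tau_m)\|u^N_m\|^2$, the It\^o computation shows the mass is non-increasing on the stochastic substep because the drift reduces to $-\epsilon\sum_k\|(\mathrm{Id}-P^N)u^{S,N}_{r,m}Q^{\frac12}e_k\|^2\le 0$, and iteration yields the $e^{L_1T}$ bound. The only difference is that you spell out explicitly the vanishing of the martingale term and the role of $P^N$, which the paper leaves implicit.
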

\begin{proof}
By the property $\|S^N(t)\|_{\mathcal{L}(\bbH;\bbH)}=1$ and Assumption \ref{assump_step1} \eqref{condition1}, it follows from \eqref{onestep} that
\begin{align*}
\|u^{D,N}_{t_{m+1},m}\|^2=\|u^N_m+\bfi \lambda|u^N_m|^2u^N_m\tau_m\|^2=\|u^N_m\|^2+\tau^2_m\|u^N_m\|^6_{L^6(\OO)}\leq (1+L_1\tau_m)\|u^N_m\|^2\quad a.s.
\end{align*}
By the It\^o formula,  for $t\in T_m,$
\begin{align}\label{Ito_app_1}
\|u^{S,N}_{t,m}\|^2-\|u^{D,N}_{t_{m+1},m}\|^2&=2\int_{t_m}^t\big\<u^{S,N}_{r,m},-\frac{\epsilon}{2}P^NF_Qu^{S,N}_{r,m}\big\>\rmd r+\epsilon\int_{t_m}^t\sum_{k=1}^{\infty}\|P^Nu^{S,N}_{r,m}Q^{\frac{1}{2}}e_k\|^2\rmd r\notag\\
&=-\epsilon\int_{t_m}^t\sum_{k=1}^{\infty}\|(\mathrm{Id}-P^N)u^{S,N}_{r,m}Q^{\frac{1}{2}}e_k\|^2\rmd r\leq 0\quad a.s.
\end{align}
Hence, combining the above two inequalities gives that 
\begin{align*}
\|u^N_{m+1}\|^2&\leq \|u^{D,N}_{t_{m+1},m}\|^2\leq (1+L_1\tau_m)\|u^N_m\|^2
\leq \prod_{j=0}^m(1+L_1\tau_j)\|u^N_0\|^2\\
&\leq e^{L_1t_{m+1}}\|u^N_0\|^2\leq e^{L_1T}\|u^N_0\|^2\quad a.s.
\end{align*}
Moreover, we derive that for $t\in T_m,$
\begin{align*}
\|u^{D,N}_{t,m}\|^2=\|u^N_m\|^2+\|u^N_m\|^6_{L^6(\OO)}(t-t_m)^2\leq (1+L_1\tau_m)e^{L_1t_m}\|u^N_0\|^2\leq e^{L_1T}\|u^N_0\|^2\quad a.s.
\end{align*}
and
\begin{align*}
\|u^{S,N}_{t,m}\|^2\leq \|u^{D,N}_{t_{m+1},m}\|^2\leq e^{L_1T}\|u^N_0\|^2\quad a.s.
\end{align*}
The proof is finished.
\end{proof}

\begin{remark}\label{rmktau_min}
It follows from Lemma \ref{Hnorm} and Assumption \ref{assump_step1} \eqref{condition2} that
\begin{align*}
\tau_m\ge (\zeta\|u^N_m\|^{\beta}+\xi)^{-1}\delta\ge (\zeta e^{\frac{1}{2}\beta L_1T}\|u^N_0\|^{\beta}+\xi)^{-1}\delta=:\tau_{min}\delta,
\end{align*}
which implies that under Assumption \ref{assump_step1}, the final time $T$ is always attainable, i.e.,
\begin{align*}
M_{T}\leq T(\inf_{t_m\in [0,T]}\tau_m)^{-1}\leq T\tau_{min}^{-1}\delta^{-1}\quad \text{a.s.}
\end{align*}
\end{remark}

\subsection{Regularity analysis}
In this subsection, we give regularity analysis of the solution of the fully discrete scheme, including the $\bbH^j\,(j=1,2)$-regularity estimates and the $\bbH^1$-exponential integrability. 
To this end, we make the following assumption on  adaptive timesteps. Let the Hamiltonian be $\HH(u):=\frac{1}{2}\|\nabla u\|^2-\frac{\lambda}{4}\|u\|^4_{L^4(\OO)},\,u\in\bbH^1$.

\begin{assumption}\label{assump_step2}
Let $\tau_m$ satisfy 
\begin{align}
&\tau_m^{\frac{1}{2}-\gamma}\lambda_N\leq L_2\quad a.s.,\label{condition3}\\
&\tau_m^{\gamma}\HH(u^N_m)\leq L_3\quad a.s.\label{condition4}
\end{align}
for some $\gamma\in(0,\frac{1}{2})$ and constants $L_2,L_3>0,$ where $\lambda_N=N^2\pi^2$ is the $N$-th eigenvalue of the Dirichlet negative Laplacian.
\end{assumption}
\begin{remark}
Note that the Gagliardo--Nirenberg inequality $\|u\|^4_{L^4(\OO)}\leq 2\|u\|^3\|\nabla u\|$, the inverse inequality $\|P^Nu\|_{\dot{\bbH}^s}\leq \lambda_N^{\frac{s}{2}}\|P^Nu\|$  and Lemma \ref{Hnorm} give $\HH(u^N_m)\leq C(\|\nabla u^N_m\|^2+1)\leq C\lambda_N.$ If both $\tau^{\frac{1}{2}-\gamma}_m\lambda_N\leq L_2$ and $\tau^{\gamma}_m\lambda_N\leq L_3$ hold, then Assumption \ref{assump_step2} is satisfied. 
\end{remark}

\begin{prop}\label{H^1regularity}
Under Assumptions \ref{assump_step1} and \ref{assump_step2}, for $p\ge 2,$ there exists a constant $C:=C(p,\epsilon,T,\HH(u^N_0))>0$ such that \begin{align*}\bbE\Big[\sup_{t\in[0,T]}\|u^{D,N}_{t,\underline t}\|^p_{\bbH^1}\Big]+\bbE\Big[\sup_{t\in[0,T]}\|u^{S,N}_{t,\underline t}\|^p_{\bbH^1}\Big]\leq C.
\end{align*}
\end{prop}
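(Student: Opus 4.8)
The plan is to reduce the $\bbH^1$-estimate to a moment bound for the Hamiltonian $\HH$ and then to propagate it one step at a time, handling the deterministic and the stochastic sub-flows of \eqref{Itoform} separately. First, by Lemma~\ref{Hnorm} the masses of $u^N_m$, $u^{D,N}_{t,m}$, $u^{S,N}_{t,m}$ are a.s.\ bounded by $e^{L_1T}\|u^N_0\|^2$, and since $\bbH^1\simeq\dot{\bbH}^1$ it suffices to bound $\|\nabla\,\cdot\,\|$. In the defocusing case $\HH(u)=\tfrac12\|\nabla u\|^2+\tfrac14\|u\|^4_{L^4(\OO)}\ge\tfrac12\|\nabla u\|^2$; in the focusing case $\|u\|^4_{L^4(\OO)}\le 2\|u\|^3\|\nabla u\|$ and Young's inequality give $\|\nabla u\|^2\le 4\HH(u)+\|u\|^6$ and $\HH(u)\ge-\tfrac14\|u\|^6$. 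Writing $c_0:=\tfrac14 e^{3L_1T}\|u^N_0\|^6$ one then has, along the scheme, $\HH\ge-c_0$ and $\|u\|_{\bbH^1}^2\le C(1+c_0+\HH(u))$, so it is enough to estimate $\bbE\big[\sup_{t\in[0,T]}\big(1+c_0+\HH(u^{D,N}_{t,\underline t})\big)^p\big]$ and its $u^{S,N}$-analogue.

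For the deterministic sub-flow \eqref{Itoform1}, I would exploit that $\HH$ is conserved by the NLS flow and $\langle v,\bfi v\rangle=0$: differentiating along \eqref{Itoform1} gives, for $t\in T_m$,
\begin{equation}\label{eq:plan-H}
\begin{split}
\tfrac{\rmd}{\rmd r}\HH(u^{D,N}_{r,m})&=-\lambda\big\langle \Delta u^{D,N}_{r,m}+\lambda|u^{D,N}_{r,m}|^2u^{D,N}_{r,m},\ \bfi\,d_{r,m}\big\rangle,\\
d_{r,m}&:=S^N(r-t_m)P^N|u^N_m|^2u^N_m-|u^{D,N}_{r,m}|^2u^{D,N}_{r,m},
\end{split}
\end{equation}
a defect which vanishes at $r=t_m$ when $P^N=\mathrm{Id}$. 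Integrating \eqref{eq:plan-H} over $T_m$, integrating by parts once to move a derivative off $\Delta u^{D,N}_{r,m}$, and bounding the resulting bilinear terms by $\|\nabla\,\cdot\,\|$- and $L^\infty$-norms of $u^{D,N}_{r,m}$ and $u^N_m$ — using the explicit form \eqref{onestep}, the contraction $\|S^N(s)v\|_{\dot{\bbH}^1}\le\|v\|_{\dot{\bbH}^1}$, the one-dimensional embedding $\bbH^1\hookrightarrow L^\infty(\OO)$ (so $\|u\|_{L^\infty(\OO)}^2\le C(1+\|\nabla u\|)$ under the mass bound), the Gagliardo--Nirenberg inequality, and, crucially, the inverse inequality $\|P^Nv\|_{\dot{\bbH}^s}\le\lambda_N^{s/2}\|P^Nv\|$ to handle both the high-mode truncation and the term $\Delta u^{D,N}_{r,m}$ — I would then use Assumption~\ref{assump_step2}~\eqref{condition3} to trade powers of $\lambda_N$ for negative powers of $\tau_m$ and Assumption~\ref{assump_step2}~\eqref{condition4} to trade powers of $\HH(u^N_m)$ for negative powers of $\tau_m$, arriving (for $\delta$ small) at a one-step bound of the form
\begin{equation}\label{eq:plan-det}
\sup_{t\in T_m}\big(1+c_0+\HH(u^{D,N}_{t,m})\big)\le\big(1+C\tau_m^{\kappa}\big)\big(1+c_0+\HH(u^N_m)\big)+C\tau_m^{1+\kappa'}
\end{equation}
for some $\kappa,\kappa'>0$ and $C$ independent of $m,N$; the supremum over $T_m$ is a deterministic function of $u^N_m$ since \eqref{Itoform1} carries no noise.

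For the stochastic sub-flow \eqref{Itoform2}, which is linear, I would apply It\^o's formula to $\big(1+c_0+\HH(u^{S,N}_{\cdot,m})\big)^p$ on $T_m$. Using $Q^{1/2}\in\mathcal L_2(H;H^2)$ and the one-dimensional embeddings $H^2\hookrightarrow W^{1,\infty}(\OO)$, $H^1\hookrightarrow L^\infty(\OO)$ — whence $\sum_k\|Q^{1/2}e_k\|^2_{W^{1,\infty}(\OO)}\le C\|Q^{1/2}\|^2_{\mathcal L^2_2}$ and thus $\sum_k\|\nabla\big(P^N(u\,Q^{1/2}e_k)\big)\|^2\le C(1+\|\nabla u\|^2)$ — together with Lemma~\ref{Hnorm} and Gagliardo--Nirenberg, the drift and the It\^o-correction terms are bounded by $C\big(1+c_0+\HH(u^{S,N}_{t,m})\big)^p$, while the martingale term is handled by Burkholder--Davis--Gundy; Gronwall's lemma then gives
\begin{equation}\label{eq:plan-sto}
\bbE\Big[\sup_{t\in T_m}\big(1+c_0+\HH(u^{S,N}_{t,m})\big)^p\,\Big|\,\mathcal F_{t_m}\Big]\le e^{C\tau_m}\big(1+c_0+\HH(u^{D,N}_{t_{m+1},m})\big)^p.
\end{equation}

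Finally, I would concatenate \eqref{eq:plan-det} and \eqref{eq:plan-sto} over $m=0,\dots,M_T-1$ — legitimate since $M_T<\infty$ a.s.\ by Remark~\ref{rmktau_min} and $\HH(u^N_0)=\HH(P^Nu_0)\le\HH(u_0)<\infty$ — and, taking expectations, close the recursion by a discrete Gronwall argument using $\sum_m\tau_m\le T$, the summability of $\sum_m\tau_m^{\kappa}$ enforced by the adaptive conditions, and a Doob/BDG maximal estimate for the martingale contributions, obtaining $\bbE\big[\sup_m\big(1+c_0+\HH(u^N_m)\big)^p\big]\le C(p,\epsilon,T,\HH(u^N_0))$; re-inserting this into \eqref{eq:plan-det}--\eqref{eq:plan-sto} controls the suprema over the interiors of the $T_m$, and undoing the reduction of the first paragraph finishes the proof. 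The hard part is the deterministic step: a crude bound on the defect in \eqref{eq:plan-H} contains a term cubic in $\|\nabla u^N_m\|$, which would produce a feedback growing with the spatial resolution $\lambda_N$ and with $\tau_m^{-1}$ — exactly the obstruction that makes fixed-step explicit, exponential and linear-implicit Euler schemes diverge for superlinearly growing drift. Shrinking $\tau_m$ precisely when $\lambda_N$ (through \eqref{condition3}) or $\HH(u^N_m)$ (through \eqref{condition4}) is large is what makes this term summable and closes the estimate; a further point requiring care is keeping every constant independent of $N$ (ensured by the inverse inequality on $\bbH_N$ and by $\HH(P^Nu_0)\le\HH(u_0)$) and passing from the nodal values $u^N_m$ to the suprema over $[0,T]$.
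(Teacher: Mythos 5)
Your proposal follows essentially the same route as the paper's proof: control of $\|\cdot\|_{\bbH^1}$ through the Hamiltonian, a one-step estimate for the deterministic sub-flow using the defect of the exponential-Euler step together with Gagliardo--Nirenberg, the inverse inequality and Assumption \ref{assump_step2} to get an increment of order $\tau_m^{1+\gamma}(\HH(u^N_m)+1)$, then It\^o's formula, Gr\"onwall and Burkholder--Davis--Gundy for the stochastic sub-flow, and concatenation over the random partition. The only cosmetic differences (applying It\^o directly to the $p$-th power and phrasing the iteration via conditional estimates, where the paper does $p=2$ and remarks that $p>2$ is analogous; also note $\HH(P^Nu_0)\le\HH(u_0)$ need not hold in the focusing case, though a uniform-in-$N$ bound on $\HH(u^N_0)$ follows from $\|P^Nu_0\|_{\bbH^1}\le C\|u_0\|_{\bbH^1}$) do not change the argument.
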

\begin{proof}
Direct calculation leads to
\begin{align*}
&D\HH(u)(v)=\<\nabla u,\nabla v\>-\lambda\<|u|^2u,v\>,\\
&D^2\HH(u)(v,w)=\<\nabla w,\nabla v\>-\lambda \<|u|^2v,w\>-2\lambda\<u\mathrm{Re}(\bar{u}v),w\>.
\end{align*}
It follows from the chain rule that
\begin{align*}
&\quad \HH(u^{D,N}_{t_{m+1},m})-\HH(u^N_m)=\int_{t_m}^{t_{m+1}}D\HH(u^{D,N}_{t,m})\rmd u^{D,N}_{t,m}\\&=\int_{t_m}^{t_{m+1}}\Big\<\nabla u^{D,N}_{t,m},\bfi \lambda \nabla (S^N(t-t_m)|u^N_m|^2u^N_m)\Big\>\rmd t -\lambda\int_{t_m}^{t_{m+1}}\Big\<|u^{D,N}_{t,m}|^2u^{D,N}_{t,m},\bfi \Delta u^{D,N}_{t,m}\Big\>\rmd t\\
&\quad -\lambda \int_{t_m}^{t_{m+1}}\Big\<|u^{D,N}_{t,m}|^2u^{D,N}_{t,m},\bfi \lambda S^N(t-t_m)|u^N_m|^2u^N_m\Big\>\rmd t\\
&=\int_{t_m}^{t_{m+1}}\left\<-\Delta u^{D,N}_{t,m},\bfi \lambda (S^N(t-t_m)-\mathrm{Id})P^N|u^N_m|^2u^N_m+\bfi \lambda P^N(|u^N_m|^2u^N_m-|u^{D,N}_{t,m}|^2u^{D,N}_{t,m})\right\>\rmd t\\
&\quad -\int_{t_m}^{t_{m+1}}\Big\<|u^{D,N}_{t,m}|^2u^{D,N}_{t,m},\bfi\lambda^2(S^N(t-t_m)-\mathrm{Id})P^N|u^N_m|^2u^N_m+\bfi \lambda^2P^N(|u^N_m|^2u^N_m-|u^{D,N}_{t,m}|^2u^{D,N}_{t,m})\Big\>\rmd t.
\end{align*}
By properties $\|(S(t)-\mathrm{Id})u\|\leq Ct^{\frac{1}{2}}\|u\|_{\dot{\bbH}^1}$, $\|S(t)\|_{\mathcal{L}(\bbH;\bbH)}=1$ and the Gagliardo--Nirenberg inequality $\|u\|^3_{L^6(\OO)}\leq C\|\nabla u\| \|u\|^2,$ we have that for $t\in T_m,$
\begin{align}\label{holder}
\|u^{D,N}_{t,m}-u^N_m\|&\leq \|(S(t-t_m)-\mathrm{Id})u^N_m\|+\|S(t-t_m)\bfi \lambda |u^N_m|^2u^N_m(t-t_m)\|\notag\\
&\leq (t-t_m)^{\frac{1}{2}}\|u^N_m\|_{\dot{\bbH}^1}+\|u^N_m\|^3_{L^6(\OO)}(t-t_m)\notag\\
&\leq C(t-t_m)^{\frac{1}{2}}\|u^N_m\|_{\dot{\bbH}^1}.
\end{align}
Therefore, combining the cubic difference formula $|u|^2u-|v|^2v=(|u|^2+|v|^2)(u-v)+uv(\overline{u-v})$ and the inverse inequality $\|P^Nu\|_{\dot{\bbH}^s}\leq \lambda_N^{\frac{s}{2}}\|P^Nu\|,$ we obtain that
\begin{align*}
&\quad \HH(u^{D,N}_{t_{m+1},m})-\HH(u^N_m)\\
&\leq \int_{t_m}^{t_{m+1}}\Big[\|\nabla u^{D,N}_{t,m}\|\tau^{\frac{1}{2}}_m\| P^N|u^N_m|^2u^N_m\|_{\dot{\bbH}^2}+\|\nabla u^{D,N}_{t,m}\|\Big\|P^N(|u^N_m|^2+|u^{D,N}_{t,m}|^2)(u^N_m-u^{D,N}_{t,m})\\
&\quad +P^Nu^N_mu^{D,N}_{t,m}(\overline{u^N_m-u^{D,N}_{t,m}})\Big\|_{\dot{\bbH}^1}+\|u^{D,N}_{t,m}\|^3_{L^6(\OO)}\tau^{\frac{1}{2}}_m\|P^N|u^N_m|^2u^N_m\|_{\dot{\bbH}^1}\\
&\quad +\|u^{D,N}_{t,m}\|^3_{L^6(\OO)}\Big\|P^N(|u^N_m|^2+|u^{D,N}_{t,m}|^2)(u^N_m-u^{D,N}_{t,m})+P^Nu^N_mu^{D,N}_{t,m}(\overline{u^N_m-u^{D,N}_{t,m}})\Big\|\Big]\rmd t\\
&\leq C\int_{t_m}^{t_{m+1}}\Big[\|\nabla u^{D,N}_{t,m}\|\tau^{\frac{1}{2}}_m\lambda_N\|u^N_m\|^3_{L^6(\OO)}+\|\nabla u^{D,N}_{t,m}\|\lambda^{\frac{1}{2}}_N(\|u^N_m\|^2_{L^{\infty}(\OO)}+\|u^{D,N}_{t,m}\|^2_{L^{\infty}(\OO)})\|u^N_m-u^{D,N}_{t,m}\|\\
&\quad+\|u^{D,N}_{t,m}\|^3_{L^6(\OO)}\tau^{\frac{1}{2}}_m\lambda^{\frac{1}{2}}_N\|u^N_m\|^3_{L^6(\OO)}+\|u^{D,N}_{t,m}\|^3_{L^6(\OO)}(\|u^N_m\|^2_{L^{\infty}(\OO)}+\|u^{D,N}_{t,m}\|^2_{L^{\infty}(\OO)})\|u^N_m-u^{D,N}_{t,m}\|\Big]\rmd t.
\end{align*}
Applying the Gagliardo--Nirenberg inequalities $\|u\|^3_{L^6(\OO)}\leq C\|\nabla u\| \|u\|^2,\|u\|^2_{L^{\infty}(\OO)}\leq C\|\nabla u\|\|u\|$, the inverse inequality and Lemma \ref{Hnorm} yields \begin{align*}
&\quad \HH(u^{D,N}_{t_{m+1},m})-\HH(u^N_m)\\
&\leq C\int_{t_m}^{t_{m+1}}\Big[\|\nabla u^{D,N}_{t,m}\|\tau^{\frac{1}{2}}_m\lambda_N\|\nabla u^N_m\|\|u^N_m\|^2+\|\nabla u^{D,N}_{t,m}\|\lambda^{\frac{1}{2}}_N\big(\|\nabla u^N_m\|\|u^N_m\|+\|u^N_m-u^{D,N}_{t,m}\|_{\dot{\bbH}^1}\|u^N_m-u^{D,N}_{t,m}\|\big)\times\\
&\quad \|u^N_m-u^{D,N}_{t,m}\|+\|\nabla u^{D,N}_{t,m}\|\|u^{D,N}_{t,m}\|^2\tau^{\frac{1}{2}}_m\lambda^{\frac{1}{2}}_N\|\nabla u^N_m\|\|u^N_m\|^2+\|\nabla u^{D,N}_{t,m}\|\|u^{D,N}_{t,m}\|^2\times\\
&\quad \big(\|\nabla u^N_m\|\|u^N_m\|+\|u^{D,N}_{t,m}-u^N_m\|_{\dot{\bbH}^1}\|u^N_m-u^{D,N}_{t,m}\|\big)\|u^N_m-u^{D,N}_{t,m}\|\Big]\rmd t\\
&\leq C\int_{t_m}^{t_{m+1}}\Big[\|\nabla u^{D,N}_{t,m}\|\|\nabla u^N_m\|\big(\tau^{\frac12}_m\lambda_N+\tau^{\frac12}_m\lambda_N^{\frac12}\big)+\|\nabla u^{D,N}_{t,m}\|\|u^N_m-u^{D,N}_{t,m}\|\times\\
&\quad \Big(\lambda^{\frac12}_N\big(\|\nabla u^N_m\|+\lambda^{\frac12}_N\|u^N_m-u^{D,N}_{t,m}\|^2\big)\Big)\Big]\rmd t.
\end{align*}
Noticing that $\|\nabla u^{D,N}_{t,m}\|\leq \|\nabla u^N_m\|+\|\nabla (u^{D,N}_{t,m}-u^N_m)\|\leq (1+C\tau^{\frac{1}{2}}_m\lambda_N^{\frac{1}{2}})\|\nabla u^N_m\|$ due to \eqref{holder} and the inverse inequality,  we arrive at 
\begin{align*}
&\quad \HH(u^{D,N}_{t_{m+1},m})-\HH(u^N_m)\\
&\leq C\int_{t_m}^{t_{m+1}}\Big((1+\tau^{\frac12}_m\lambda^{\frac12}_N)\tau^{\frac12}_m\lambda_N+(1+\tau^{\frac12}_m\lambda_N^{\frac12})\tau_m^{\frac12}(\lambda_N+\tau_m\lambda_N^{2})\Big)\|\nabla u^N_m\|^2\rmd t\\
&\leq C\int_{t_m}^{t_{m+1}}(\tau^{\frac{1}{2}}_m\lambda_N+\tau_m\lambda_N^{\frac{3}{2}}+\tau_m^{\frac{3}{2}}\lambda^{2}_N+\tau_m^2\lambda_N^{\frac52})\|\nabla u^N_m\|^2\rmd t\\
&\leq CL_2\tau_m^{1+\gamma}\|\nabla u^N_m\|^2
\end{align*}
under the assumption \eqref{condition3}.
Since the Gagliardo--Nirenberg inequality $\|u\|^4_{L^4(\OO)}\leq 2\|\nabla u\|\|u\|^3$ and the Young inequality lead to
\begin{align*}
\HH(u^N_m)\ge \frac{1}{4}(\|\nabla u^N_m\|^2-\|u^N_m\|^6),
\end{align*}
which implies that $\|\nabla u^N_m\|^2\leq C(\HH(u^N_m)+\|u^N_0\|^6),$ we obtain
\begin{align}\label{iterH_D}
\HH(u^{D,N}_{t_{m+1},m})\leq \HH(u^N_m)+C\tau^{1+\gamma}_m(\HH(u^N_m)+1).
\end{align}
Applying the It\^o formula yields
\begin{align*}
&\quad\HH(u^{S,N}_{t,m})-\HH(u^{D,N}_{t_{m+1},m})\\
&=\int_{t_m}^t\big\<\nabla u^{S,N}_{r,m},-\frac{\epsilon}{2}\nabla (F_Qu^{S,N}_{r,m})\big\>\rmd r-\int_{t_m}^t\big\<\nabla u^{S,N}_{r,m},\bfi \sqrt\epsilon u^{S,N}_{r,m}\nabla (\rmd W(r))\big\>\\
&\quad -\lambda \int_{t_m}^t\big\<|u^{S,N}_{r,m}|^2u^{S,N}_{r,m},-\frac{\epsilon}{2}P^NF_Qu^{S,N}_{r,m}\rmd r-\bfi \sqrt \epsilon P^Nu^{S,N}_{r,m}\rmd W(r)\big\>\\
&\quad +\frac{\epsilon}{2}\int_{t_m}^t\sum_{k=1}^{\infty}\|\nabla P^N u^{S,N}_{r,m}Q^{\frac{1}{2}}e_k\|^2\rmd r-\frac{\lambda \epsilon}{2}\int_{t_m}^t\sum_{k=1}^{\infty}\big\<|u^{S,N}_{r,m}|^2(-\bfi P^Nu^{S,N}_{r,m}Q^{\frac{1}{2}}e_k),\\
&\quad -\bfi   P^Nu^{S,N}_{r,m}Q^{\frac{1}{2}}e_k\big\>\rmd r-\lambda \epsilon \int_{t_m}^t\sum_{k=1}^{\infty}\big\<u^{S,N}_{r,m}\mathrm{Re}(\overline{u^{S,N}_{r,m}}(-\bfi P^Nu^{S,N}_{r,m}Q^{\frac{1}{2}}e_k)),-\bfi P^Nu^{S,N}_{r,m}Q^{\frac{1}{2}}e_k\big\>\rmd r.
\end{align*}
Taking the expectation, using inequalities $\|u\|_{L^{\infty}}\leq C\|u\|_{\bbH^1},\;\|u\|^3_{L^6(\OO)}\leq C\|\nabla u\|\|u\|^2,u\in \bbH^1$, $(\|F_Q\|_{L^{\infty}(\mathcal{O})}\vee \|\nabla F_Q\|)\leq \|Q^{\frac{1}{2}}\|^2_{\mathcal{L}^1_2}$ and applying Lemma \ref{Hnorm} lead to 
\begin{align*}
\bbE[\HH(u^{S,N}_{t,m})]-\bbE[\HH(u^{D,N}_{t_{m+1},m})]&\leq C\int_{t_m}^t\|\nabla u^{S,N}_{r,m}\|(\|\nabla F_Q\|\|u^{S,N}_{r,m}\|_{L^{\infty}(\OO)}+\|F_Q\|_{L^{\infty}(\OO)}\|\nabla u^{S,N}_{r,m}\|)\rmd r\\
&\quad +C\int_{t_m}^t\|u^{S,N}_{r,m}\|^3_{L^6(\OO)}\|F_Q\|_{L^{\infty}(\OO)}\|u^{S,N}_{r,m}\|\rmd r+C\int_{t_m}^t\|\nabla u^{S,N}_{r,m}\|^2\|Q^{\frac{1}{2}}\|^2_{\mathcal{L}^1_2}\rmd r\\
&\quad +C\int_{t_m}^t\|u^{S,N}_{r,m}\|^2_{L^{\infty}}\|u^{S,N}_{r,m}\|^2\sum_{k=1}^{\infty}\|Q^{\frac{1}{2}}e_k\|^2_{L^{\infty}(\OO)}\rmd r\\
&\leq C\|Q^{\frac{1}{2}}\|^2_{\mathcal{L}^1_2}\bbE\int_{t_m}^t(\|\nabla u^{S,N}_{r,m}\|^2+1)\rmd r,
\end{align*}
which together with \eqref{iterH_D} and the assumption \eqref{condition4} gives that for $t\in T_m,$
\begin{align*}
\bbE[\HH(u^{S,N}_{t,m})]&\leq \bbE[\HH(u^N_m)]+C\bbE\int_{t_m}^{t_{m+1}}\tau_m^{\gamma}(\HH(u^N_m)+1)\rmd r+C\bbE\int_{t_m}^t(\HH(u^{S,N}_{r,m})+1)\rmd r\\
&\leq \bbE[\HH(u^N_m)]+C\bbE\int_{t_m}^t(\HH(u^{S,N}_{r,m})+1)\rmd r+C\bbE\tau_m.
\end{align*}
By iteration, we have
\begin{align*}
\bbE[\HH(u^{S,N}_{t,\underline{t}})]\leq \bbE[\HH(u^N_0)]+C\int_0^t\bbE[\HH(u^{S,N}_{r,\underline{r}})]\rmd r+CT,
\end{align*}
which implies  
\begin{align*}
\sup_{t\in[0,T]}\bbE[\HH(u^{S,N}_{t,\underline{t}})]\leq (\bbE[\HH(u^N_0)]+C)e^{CT}
\end{align*}
due to the Gr\"onwall inequality.
Hence, one derives $\sup_{t\in[0,T]}(\bbE[\|u^{D,N}_{t,\underline t}\|^2_{\bbH^1}]\vee \bbE[\|u^{S,N}_{t,\underline t}\|^2_{\bbH^1}])\leq C.$ 

Moreover, by utilizing the Burkholder--Davis--Gundy inequality,
we  can also obtain the following supremum type inequality
 \begin{align}\label{BDGsup}
 &\quad\;\bbE\Big[\Big|\sup_{t\in[0,T]}\int_0^t\Big\<\nabla u^{S,N}_{s,\underline{s}},\bfi  \sqrt\epsilon P^N u^{S,N}_{s,\underline{s}} \nabla(\rmd W(s))\Big\>\Big|^2\Big]\notag\\
 &+\bbE\Big[\Big|\sup_{t\in[0,T]}\int_{0}^t\big\<|u^{S,N}_{r,m}|^2u^{S,N}_{r,m},\bfi \sqrt \epsilon P^Nu^{S,N}_{r,m}\rmd W(r)\big\>\Big|^2\Big]\leq C\bbE\Big[\int_0^T\|\nabla u^{S,N}_{s,\underline{s}}\|^2\|Q^{\frac{1}{2}}\|^2_{\mathcal{L}^1_2}\rmd s\Big].
 \end{align} 
Applying the above inequalities, one can  finish the proof for the case of $p=2$. For the case of $p>2,$ it can be proved similarly by means of the It\^o formula, we omit the proof.
\end{proof}

 Below, we prove the $\bbH^1$-exponential integrability for the solution of the fully discrete scheme. To this end, 
  we first present a useful exponential integrability lemma, which is a variant of \cite[Lemma $3.1$]{Cui_2016} or \cite[Lemma $2.1$]{Cui_2019}, and we refer to them for the proofs and more details. 
\begin{lemma}\label{Lemma_exp}
Let $X$ be an $\bbH$-valued adapted stochastic process with continuous sample paths satisfying $\int_{\underline t}^t\|\mu(X_t)\|+\|\sigma(X_t)\|^2\rmd t<\infty\;a.s.\;\forall \,t\in[0,T],$ and $X_t=X_{\underline t}+\int_{\underline t}^t\mu (X_r)\rmd r+\int_{\underline t}^t\sigma(X_r)\mathrm{d}W(r).$ If there are two functionals $V$ and $\overline{V}\in\mathcal{C}^2(\bbH;\mathbb{R})$ and a constant $\alpha>0$ such that $DV(X_s)\mu(X_s)+\frac12 \mathrm{Tr}\big(D^2V(X_s)\sigma(X_s)\sigma(X_s)^*\big)+\frac1{2e^{\alpha(s-\underline t)}}\|\sigma(X_s)^*DV(X_s)\|^2+\overline{V}(X_s)\leq \alpha V(X_s)\;a.s. \;\forall\,s\in[\underline t,t),$ then for $t\in[0,T],$
\begin{align}\label{expo_random}
\bbE\Big[\exp\Big\{\frac{V(X_t)}{e^{\alpha(t-\underline t)}}+\int_{\underline t}^t\frac{\overline V(X_r)}{e^{\alpha(r-\underline t)}}\rmd r\Big\}\Big]\leq \bbE\big[\exp\{V(X_{\underline t})\}\big].
\end{align}
Especially, when $\sigma\equiv 0,$ 
\begin{align}\label{expo_nonrandom}
\exp\Big\{\frac{V(X_t)}{e^{\alpha (t-\underline t)}}+\int_{\underline t}^{t}\frac{\overline{V}(X_r)}{e^{\alpha(r-\underline t)}}\rmd r\Big\}\leq \exp\{V(X_{\underline t})\}\quad a.s.
\end{align} 
\end{lemma}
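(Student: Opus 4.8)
The plan is to reduce \eqref{expo_random} to an application of It\^o's formula together with the supermartingale property of a suitably chosen exponential process. Fix $t\in[0,T]$ and write $s_0:=\underline t$ for brevity; note $\underline t$ is constant on the interval in question. Introduce the auxiliary process
\begin{align*}
Z_s:=\frac{V(X_s)}{e^{\alpha(s-s_0)}}+\int_{s_0}^s\frac{\overline V(X_r)}{e^{\alpha(r-s_0)}}\,\rmd r,\qquad s\in[s_0,t],
\end{align*}
so that the claim is $\bbE[\exp\{Z_t\}]\leq\bbE[\exp\{V(X_{s_0})\}]=\bbE[\exp\{Z_{s_0}\}]$. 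First I would apply the It\^o formula to $Z_s$, using the dynamics $\rmd X_s=\mu(X_s)\rmd s+\sigma(X_s)\rmd W(s)$ and the product rule for the factor $e^{-\alpha(s-s_0)}$. This yields
\begin{align*}
\rmd Z_s=e^{-\alpha(s-s_0)}\Big(-\alpha V(X_s)+DV(X_s)\mu(X_s)+\tfrac12\mathrm{Tr}\big(D^2V(X_s)\sigma(X_s)\sigma(X_s)^*\big)+\overline V(X_s)\Big)\rmd s+e^{-\alpha(s-s_0)}DV(X_s)\sigma(X_s)\,\rmd W(s).
\end{align*}

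Next I would apply the It\^o formula a second time to $\exp\{Z_s\}$. The It\^o correction produces the term $\tfrac12\exp\{Z_s\}\,e^{-2\alpha(s-s_0)}\|\sigma(X_s)^*DV(X_s)\|^2$, so the drift of $\exp\{Z_s\}$ equals $\exp\{Z_s\}$ times
\begin{align*}
e^{-\alpha(s-s_0)}\Big(-\alpha V(X_s)+DV(X_s)\mu(X_s)+\tfrac12\mathrm{Tr}\big(D^2V(X_s)\sigma(X_s)\sigma(X_s)^*\big)+\overline V(X_s)+\tfrac1{2e^{\alpha(s-s_0)}}\|\sigma(X_s)^*DV(X_s)\|^2\Big).
\end{align*}
By the standing hypothesis, the quantity inside the parentheses is $\leq 0$ almost surely for $s\in[s_0,t)$, hence the drift of $\exp\{Z_s\}$ is nonpositive. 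Consequently $\exp\{Z_s\}$ is a nonnegative local supermartingale on $[s_0,t]$; I would localize with a sequence of stopping times $\tau_n\uparrow t$ making the stochastic integral a true martingale, obtaining $\bbE[\exp\{Z_{t\wedge\tau_n}\}]\leq\bbE[\exp\{Z_{s_0}\}]$, and then pass to the limit by Fatou's lemma on the left, using the integrability assumption $\int_{s_0}^t(\|\mu(X_r)\|+\|\sigma(X_r)\|^2)\rmd r<\infty$ a.s.\ and continuity of sample paths to guarantee $Z_{t\wedge\tau_n}\to Z_t$ a.s. This gives \eqref{expo_random}. The deterministic case \eqref{expo_nonrandom} is the pathwise specialization with $\sigma\equiv0$: then there is no stochastic integral, $s\mapsto\exp\{Z_s\}$ is a.s.\ nonincreasing by the same drift computation, and $\exp\{Z_t\}\leq\exp\{Z_{s_0}\}$ holds pathwise.

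The main obstacle is handling the localization and integrability carefully: $V$ and $\overline V$ are merely $\mathcal C^2$ on the infinite-dimensional space $\bbH$, so one must ensure the It\^o formula applies (which it does along the continuous semimartingale $X$ with the stated integrability), and one must justify that the negative drift genuinely dominates so that no spurious mass is lost in the limit $\tau_n\uparrow t$ — Fatou on the left combined with the uniform bound $\bbE[\exp\{Z_{t\wedge\tau_n}\}]\leq\bbE[\exp\{V(X_{s_0})\}]$ on the right closes this gap without needing uniform integrability. Since this lemma is stated as a variant of \cite[Lemma 3.1]{Cui_2016} and \cite[Lemma 2.1]{Cui_2019}, I would keep the argument brief and refer to those works for the technical details of the localization.
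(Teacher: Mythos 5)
Your argument is correct and is essentially the same exponential-supermartingale proof used in the sources the paper cites: the paper itself gives no proof of this lemma, deferring to \cite[Lemma 3.1]{Cui_2016} and \cite[Lemma 2.1]{Cui_2019}, whose arguments proceed exactly as you do (It\^o's formula for $e^{-\alpha(s-\underline t)}V(X_s)$ plus the accumulated $\overline V$ term, then for its exponential, nonpositive drift from the hypothesis, localization and Fatou). The only point to state a bit more carefully is that $\underline t$ (i.e.\ the grid time $t_m$) is random but $\mathcal F_{t_m}$-measurable, so the stopped stochastic integral started at $t_m$ is still a martingale and the expectation bound goes through conditionally on $\mathcal F_{t_m}$; with that remark your proof is complete.
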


\begin{prop}\label{expHu^N_m}
Under Assumptions \ref{assump_step1} and \ref{assump_step2}, there exist  constants $\alpha_{\lambda}, C>0$ such that 
$$\sup_{t\in[0,T]}\bbE\Big[\exp\Big\{\frac{\HH(u^{S,N}_{t,\underline t})}{e^{\alpha_{\lambda}t}}\Big\}\Big]\leq C\bbE[\exp\{\HH(u^N_0)\}].$$ \end{prop}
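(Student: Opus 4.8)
The plan is to apply the exponential integrability lemma, Lemma~\ref{Lemma_exp}, to the stochastic subsystem \eqref{Itoform2} on each subinterval $T_m$, to combine this with the deterministic Hamiltonian estimate \eqref{iterH_D}, and then to iterate over $m=0,\dots,M_T-1$. Throughout, one works in the finite-dimensional space $\bbH_N$, on which $\HH$ is a smooth functional, so that no issue of differentiability of $\HH$ arises. First I would fix a shifted Lyapunov functional: by the bound $\HH(u)\ge\frac14(\|\nabla u\|^2-\|u\|^6)$ obtained in the proof of Proposition~\ref{H^1regularity} together with the a.s.\ mass bound of Lemma~\ref{Hnorm}, one can choose a constant $C_0=C_0(T,\|u^N_0\|)>0$ such that $V:=\HH+C_0$ satisfies $V\ge1$ and $\|\nabla v\|^2+1\le 4V(v)$ whenever $v\in\{u^N_m,\,u^{D,N}_{t,m},\,u^{S,N}_{t,m}:t\in[0,T]\}$; note $DV=D\HH$ and $D^2V=D^2\HH$.

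The principal step is to verify the hypothesis of Lemma~\ref{Lemma_exp} for \eqref{Itoform2}, i.e.\ with $\mu(u)=-\frac{\epsilon}{2}P^NF_Qu$, $\sigma(u)e_k=-\bfi\sqrt\epsilon P^N(u\,Q^{\frac12}e_k)$, the functional $V$ above, $\overline V\equiv0$, and a constant $\alpha_\lambda>0$:
\begin{align*}
D\HH(u)\mu(u)+\tfrac12\mathrm{Tr}\big(D^2\HH(u)\sigma(u)\sigma(u)^*\big)+\tfrac1{2e^{\alpha_\lambda(s-t_m)}}\big\|\sigma(u)^*D\HH(u)\big\|^2\le\alpha_\lambda V(u).
\end{align*}
The first two terms are exactly those estimated in the proof of Proposition~\ref{H^1regularity}, hence bounded by $C\epsilon\|Q^{\frac12}\|^2_{\LL^2_2}(\|\nabla u\|^2+1)$. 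For the new term I would write $D\HH(u)=-\Delta u-\lambda|u|^2u$, so $\<D\HH(u),\sigma(u)e_k\>=\sqrt\epsilon\,\mathrm{Re}\big[\bfi\int_\OO(-\Delta u-\lambda|u|^2u)\overline{P^N(u\,Q^{\frac12}e_k)}\big]$, and exploit that the leading contributions are harmless: the nonlinear part costs only one power of $\|\nabla u\|$ (via $\||u|^2u\|=\|u\|^3_{L^6(\OO)}\le C\|\nabla u\|\|u\|^2$ and the mass bound), while in the linear part one replaces $P^N(-\Delta u)$ by $-\Delta u$ (legitimate since $-\Delta u\in\bbH_N$), integrates by parts, and observes that $\mathrm{Re}\big[\bfi\int_\OO|\nabla u|^2Q^{\frac12}e_k\big]=0$ because $|\nabla u|^2Q^{\frac12}e_k$ is real-valued, so that only $\mathrm{Re}\big[\bfi\int_\OO\nabla u\,\bar u\,\nabla(Q^{\frac12}e_k)\big]$ remains, which is $\le C\|Q^{\frac12}e_k\|_{H^2}\|\nabla u\|$ by the one-dimensional embedding $H^2(\OO)\hookrightarrow W^{1,\infty}(\OO)$ and the mass bound. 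Summing in $k$ gives $\|\sigma(u)^*D\HH(u)\|^2\le C\epsilon\|Q^{\frac12}\|^2_{\LL^2_2}\|\nabla u\|^2$; since the prefactor $\tfrac1{2e^{\alpha_\lambda(s-t_m)}}\le\tfrac12$, adding the three contributions and using $\|\nabla u\|^2+1\le4V(u)$ yields the inequality above with $\alpha_\lambda:=C\epsilon\|Q^{\frac12}\|^2_{\LL^2_2}$. Consequently, by the conditional form of Lemma~\ref{Lemma_exp} (which follows from the same supermartingale argument as in \cite{Cui_2016,Cui_2019}), for $t\in T_m$,
\begin{align*}
\bbE\Big[\exp\Big\{\frac{V(u^{S,N}_{t,m})}{e^{\alpha_\lambda(t-t_m)}}\Big\}\,\Big|\,\mathcal F_{t_m}\Big]\le\exp\big\{V(u^{D,N}_{t_{m+1},m})\big\}.
\end{align*}

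For the deterministic half-step, the computation leading to \eqref{iterH_D} gives $\HH(u^{D,N}_{t,m})-\HH(u^N_m)\le C\tau_m^{1+\gamma}(\HH(u^N_m)+1)$ for every $t\in T_m$; writing $\tau_m^{1+\gamma}(\HH(u^N_m)+1)=\tau_m\cdot\tau_m^{\gamma}(\HH(u^N_m)+1)$ and invoking $\tau_m^{\gamma}\HH(u^N_m)\le L_3$ and $\tau_m\le T\delta$ (Assumptions~\ref{assump_step1}--\ref{assump_step2}) bounds $\tau_m^{\gamma}(\HH(u^N_m)+1)$ by a constant, whence $V(u^{D,N}_{t,m})\le V(u^N_m)+C\tau_m$ for all $t\in T_m$. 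Substituting this into the previous display, using $(\exp\{a\})^{\theta}=\exp\{a\theta\}$ with the $\mathcal F_{t_m}$-measurable exponent $\theta=e^{-\alpha_\lambda t_m}\in(0,1]$ and conditional Jensen's inequality, and recalling $u^N_{m+1}=u^{S,N}_{t_{m+1},m}$ and $t_{m+1}=t_m+\tau_m$, one obtains
\begin{align*}
\bbE\Big[\exp\Big\{\frac{V(u^N_{m+1})}{e^{\alpha_\lambda t_{m+1}}}\Big\}\Big]\le e^{C\tau_m}\,\bbE\Big[\exp\Big\{\frac{V(u^N_m)}{e^{\alpha_\lambda t_m}}\Big\}\Big]\le e^{CT\delta}\,\bbE\Big[\exp\Big\{\frac{V(u^N_m)}{e^{\alpha_\lambda t_m}}\Big\}\Big].
\end{align*}
Iterating over $m$ and using $M_T\delta\le T\tau_{min}^{-1}$ from Remark~\ref{rmktau_min}, the accumulated factor $e^{CT\delta M_T}\le e^{CT^2\tau_{min}^{-1}}$ is finite, so $\bbE[\exp\{V(u^N_m)/e^{\alpha_\lambda t_m}\}]\le C\exp\{V(u^N_0)\}$ for every $m$ (recall $u^N_0$ is deterministic). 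Finally, for general $t\in T_m$, applying once more the conditional Lemma~\ref{Lemma_exp}, the deterministic half-step estimate and conditional Jensen's inequality as above, together with $\HH\le V$ and $e^{\alpha_\lambda t}=e^{\alpha_\lambda(t-t_m)}e^{\alpha_\lambda t_m}$, gives $\bbE[\exp\{\HH(u^{S,N}_{t,\underline t})/e^{\alpha_\lambda t}\}]\le Ce^{C_0}\exp\{\HH(u^N_0)\}=Ce^{C_0}\bbE[\exp\{\HH(u^N_0)\}]$, and taking the supremum over $t\in[0,T]$ completes the proof.

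The main obstacle is the Lyapunov-type inequality of the second paragraph, specifically showing that $\|\sigma(u)^*D\HH(u)\|^2$ is linear, rather than superlinear, in $V(u)$: this rests on the cancellation $\mathrm{Re}[\bfi\int_\OO|\nabla u|^2Q^{\frac12}e_k]=0$ in the linear part, on the favorable scaling $\||u|^2u\|=\|u\|^3_{L^6(\OO)}$ of the nonlinear part, and crucially on the a.s.\ mass bound of Lemma~\ref{Hnorm}, which is what allows the lower-order factors of $\|u\|$ to be absorbed into constants. A minor technical point is justifying the conditional version of Lemma~\ref{Lemma_exp}, which is needed to accumulate the discount factor $e^{\alpha_\lambda t}$ across the successive adaptive steps.
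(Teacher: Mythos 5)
Your proposal is correct and takes essentially the same route as the paper: apply the exponential-integrability lemma (Lemma \ref{Lemma_exp}) to the diffusion half-step \eqref{Itoform2} with the Hamiltonian as Lyapunov functional — including the same key cancellation in $\|\sigma^*D\HH\|^2$ and the a.s.\ mass bound of Lemma \ref{Hnorm} to make the condition linear in $\HH$ — combine it with the pathwise deterministic estimate \eqref{iterH_D} under $\tau_m^{\gamma}\HH(u^N_m)\leq L_3$ and $\tau_m\leq T\delta$, and iterate with Remark \ref{rmktau_min} controlling the accumulated factor. The only differences are cosmetic packaging: the paper takes $\overline V=-\beta_{\lambda}$ and handles the discount across random steps by "considering $e^{-\alpha_{\lambda}t_m}\HH$ instead of $\HH$", whereas you shift $V=\HH+C_0$ with $\overline V\equiv 0$ and use a conditional form of the lemma plus conditional Jensen with exponent $e^{-\alpha_{\lambda}t_m}$ — the same bookkeeping in a slightly different form.
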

\begin{proof}
Let $\mu_1(u^{D,N}_{t,m})=\bfi \Delta u^{D,N}_{t,m}+\bfi \lambda S^N(t-t_m)|u^N_m|^2u^N_m$ for $t\in T_m.$ Similarly to the proof of Proposition \ref{H^1regularity}, we have
\begin{align*}
D\HH(u^{D,N}_{t,m})\mu_1(u^{D,N}_{t,m})\leq C\tau^{\gamma}_m\|\nabla u^N_m\|^2\leq C\tau^{\gamma}_m(\|\nabla u^{D,N}_{t,m}\|^2+\lambda_N^2\tau_m\|u^N_m\|^2)\leq C_0+C_1\tau^{\gamma}_m\HH(u^{D,N}_{t,m}).
\end{align*}
Applying Lemma \ref{Lemma_exp} \eqref{expo_nonrandom} with $\underline t=t_m,\,\mu=\mu_1,\,\sigma\equiv0,\,V=\mathcal{H},\,\overline{V}=-C_0$ and $\alpha=C_1\tau_m^{\gamma}$, and letting $t=t_{m+1}^-$ and taking the limit, we obtain
\begin{align*}
\exp\Big\{\frac{\HH(u^{D,N}_{t_{m+1},m})}{e^{\alpha \tau_m}}-C_0\int_{t_m}^{t_{m+1}}\frac{1}{e^{\alpha(r-t_m)}}\rmd r\Big\}\leq \exp\{\HH(u^N_m)\}.
\end{align*}
Using the fact that $\int_{t_m}^{t_{m+1}}\frac{1}{e^{\alpha(r-t_m)}}\rmd r=\frac{1-e^{-\alpha \tau_m}}{\alpha}\leq \tau_m$ yields 
\begin{align*}
\exp\Big\{\frac{\HH(u^{D,N}_{t_{m+1},m})}{e^{\alpha \tau_m}}\Big\}\leq \exp\{\HH(u^N_m)+C_0\tau_m\},
\end{align*}
which gives 
\begin{align}\label{expH_determ}
\exp\{\HH(u^{D,N}_{t_{m+1},m})\}\leq \exp\{(\HH(u^N_m)+C_0\tau_m)e^{\alpha \tau_m}\}\leq \exp\{(\HH(u^N_m)+C_0\tau_m)(1+2C_1\tau^{1+\gamma}_m )\}
\end{align}
for $\tau_m\leq T\delta$ with $\delta$ being small.

We claim that \begin{align}\label{claim_Lemma}
\sup_{t\in T_m}\bbE\Bigg[\exp\Big\{\frac{\HH(u^{S,N}_{t,m})}{e^{\alpha_{\lambda}(t-t_m)}}-\int_{t_m}^t\frac{\beta_{\lambda}}{e^{\alpha_{\lambda}(r-t_m)}}
\rmd r\Big\}\Bigg]\leq \bbE[\exp\{\HH(u^{D,N}_{t_{m+1},m})\}],
\end{align}
where $\alpha_{\lambda}=C(e^{3L_1T}\|u^N_0\|^6+1)\|Q^{\frac{1}{2}}\|^2_{\mathcal{L}^2_2}$ and $\beta_{\lambda}=C(e^{6L_1T}\|u^N_0\|^{12}+1)\|Q^{\frac{1}{2}}\|^2_{\mathcal{L}^2_2}.$
 In fact, 
 by letting $\mu_2(u)=-\frac{\epsilon}{2}P^NF_Qu$ and $\sigma_2(u)=-\bfi \sqrt \epsilon P^NuQ^{\frac{1}{2}},$  we obtain
\begin{align*}
&\quad D\HH(u^{S,N}_{t,m})\mu_2(u^{S,N}_{t,m})+\frac{1}{2}\mathrm{Tr}\big[D^2\HH(u^{S,N}_{t,m})\sigma_2(u^{S,N}_{t,m})\sigma_2(u^{S,N}_{t,m})^*\big]+\frac{1}{2e^{\alpha_{\lambda}(t-t_m)}}\|\sigma_2(u^{S,N}_{t,m})^*D\HH(u^{S,N}_{t,m})\|^2\\
&=\epsilon \big\<\nabla u^{S,N}_{t,m},\nabla (-\frac12P^NF_Qu^{S,N}_{t,m})\big\>-\lambda \epsilon \big\<|u^{S,N}_{t,m}|^2u^{S,N}_{t,m},-\frac{1}{2}P^NF_Qu^{S,N}_{t,m}\big\>+\frac{\epsilon}{2}\sum_{k=1}^{\infty}\|\nabla (-\bfi P^Nu^{S,N}_{t,m} Q^{\frac{1}{2}}e_k)\|^2\\
&\quad -\frac{\lambda\epsilon}{2}\sum_{k=1}^{\infty}\big\<|u^{S,N}_{t,m}|^2(P^Nu^{S,N}_{t,m}Q^{\frac{1}{2}}e_k),P^Nu^{S,N}_{t,m}Q^{\frac{1}{2}}e_k\big\>-\lambda\epsilon \sum_{k=1}^{\infty}\big\<u^{S,N}_{t,m}\mathrm{Re}(\overline{u^{S,N}_{t,m}}(-\bfi P^Nu^{S,N}_{t,m}Q^{\frac{1}{2}}e_k)),\\
&\quad-\bfi P^Nu^{S,N}_{t,m}Q^{\frac{1}{2}}e_k\big\>+\frac{\epsilon}{2e^{\alpha_{\lambda}(t-t_m)}}\sum_{k=1}^{\infty}\Big(\big\<\nabla u^{S,N}_{t,m}, -\bfi u^{S,N}_{t,m}(\nabla Q^{\frac{1}{2}}e_k)\big\>-\lambda\big\<|u^{S,N}_{t,m}|^2u^{S,N}_{t,m},-\bfi P^Nu^{S,N}_{t,m}Q^{\frac{1}{2}}e_k\big\>\Big)^2\\
&\leq C\|\nabla u^{S,N}_{t,m}\|^2\|Q^{\frac{1}{2}}\|^2_{\mathcal{L}^1_2}+C\|u^{S,N}_{t,m}\|^3_{L^6(\OO)}\|u^{S,N}_{t,m}\|\|F_Q\|_{L^{\infty}(\OO)}+C\|\nabla u^{S,N}_{t,m}\|^2\|Q^{\frac{1}{2}}\|^2_{\mathcal{L}^1_2}\times\\
&\quad (1+\|u^{S,N}_{t,m}\|^2)
+\frac{C}{2e^{\alpha_{\lambda}(t-t_m)}}(\|\nabla u^{S,N}_{t,m}\|^2\|u^{S,N}_{t,m}\|^2\|Q^{\frac{1}{2}}\|^2_{\mathcal{L}^2_2}+\|u^{S,N}_{t,m}\|^6_{L^6(\OO)}\|u^{S,N}_{t,m}\|^2\|Q^{\frac{1}{2}}\|^2_{\mathcal{L}^1_2})\\
&\leq C\|\nabla u^{S,N}_{t,m}\|^2\|Q^{\frac{1}{2}}\|^2_{\mathcal{L}^2_2}(\|u^{S,N}_{t,m}\|^6+1)+C\|u^{S,N}_{t,m}\|^6\|Q^{\frac{1}{2}}\|^2_{\mathcal{L}^1_2}\\
&\leq C(e^{3L_1T}\|u^N_0\|^6+1)\|Q^{\frac12}\|^2_{\mathcal{L}^2_2}\HH(u^{S,N}_{t,m})+C(e^{6L_1T}\|u^N_0\|^{12}+1)\|Q^{\frac12}\|^2_{\mathcal{L}^2_2},
\end{align*}
where in the second inequality we use the Gagliardo--Nirenberg inequality $\|u\|^3_{L^6(\OO)}\leq C\|\nabla u\|\|u\|^2$ for $u\in \bbH^1$, and in the last step we use the inequality $\|\nabla u^{S,N}_{t,m}\|^2\leq 4\HH(u^{S,N}_{t,m})+\|u^{S,N}_{t,m}\|^6,\,t\in T_m$ and Lemma \ref{Hnorm}.
Applying Lemma \ref{Lemma_exp} \eqref{expo_random} with $\mu=\mu_2,\,\sigma=\sigma_2,\,V=\HH,\,\overline V=-\beta_{\lambda}$ and $\alpha=\alpha_{\lambda}$ leads to \eqref{claim_Lemma}.

Hence, it follows from \eqref{claim_Lemma} and the assumption $\tau_m\leq T\delta$ that
\begin{align*}
&\quad\;\bbE\Big[\exp\Big\{\frac{\HH(u^{S,N}_{t,m})}{e^{\alpha_{\lambda}(t-t_m)}}\Big\}\Big]\leq \bbE\Big[\exp\Big\{\frac{\HH(u^{S,N}_{t,m})}{e^{\alpha_{\lambda}(t-t_m)}}-\int_{t_m}^t\frac{\beta_{\lambda}}{e^{\alpha_{\lambda}(r-t_m)}}
\rmd r\Big\}e^{\beta_{\lambda}(t-t_m)}\Big]\\
&\leq \bbE[\exp\{\HH(u^{D,N}_{t_{m+1},m})\}]e^{T\beta_{\lambda}\delta }\leq \bbE\big[\exp\{(\HH(u^N_m)+C_0\tau_m)(1+2C_1 \tau_m^{1+\gamma})\}\big]e^{T\beta_{\lambda}\delta},
\end{align*}
where in the last step we use \eqref{expH_determ}. 
By considering $e^{-\alpha_{\lambda}t_m}\HH$ instead of $\HH,$ we can obtain
\begin{align*}
\bbE\Big[\exp\Big\{\frac{\HH(u^{S,N}_{t,m})}{e^{\alpha_{\lambda}t}}\Big\}\Big]&\leq \bbE\Big[\exp\Big\{\big(\frac{\HH(u^N_m)}{e^{\alpha_{\lambda}t_m}}+C_0\tau_m\big)(1+2C_1 \tau_m^{1+\gamma})\Big\}\Big]e^{T\beta_{\lambda}\delta}\\
&\leq \bbE\Big[\exp\Big\{\frac{\HH(u^N_m)}{e^{\alpha_{\lambda}t_m}}+2C_1 \tau_m^{1+\gamma}\HH(u^N_m)+C\tau_m\Big\}\Big]e^{T\beta_{\lambda}\delta}\\
&\leq \bbE\Big[\exp\Big\{\frac{\HH(u^N_m)}{e^{\alpha_{\lambda}t_m}}\Big\}\Big]e^{C\delta}
\end{align*}
under the assumption that $\tau^{\gamma}_m\HH(u^N_m)\leq L_3.$
By iteration and using Remark \ref{rmktau_min} lead to
\begin{align*}
\bbE\Big[\exp\Big\{\frac{\HH(u^{S,N}_{t,m})}{e^{\alpha_{\lambda}t}}\Big\}\Big]\leq \bbE[\exp\{\HH(u^N_0)\}]e^{C\delta (m+1)}\leq \bbE[\exp\{\HH(u^N_0)\}]e^{CT\tau_{min}^{-1}}.
\end{align*}
The proof is finished.
\end{proof}

In order to derive the $\bbH^2$-regularity of the solution of the fully discrete scheme, we introduce the  functional 
$
f(u)=\|\Delta u\|^2+\lambda \<\Delta u,|u|^2u\>,\; u\in \bbH^2.
$
\begin{prop}\label{H2regularity}
Under Assumptions \ref{assump_step1} and \ref{assump_step2}, for $p\ge 2$, there exists a constant $C:=C(p,\epsilon,T,f(u^N_0))>0$ such that
\begin{align*}
\bbE\Big[\sup_{t\in[0,T]}\|u^{D,N}_{t,\underline t}\|^p_{\bbH^2}\Big]+\bbE\Big[\sup_{t\in[0,T]}\|u^{S,N}_{t,\underline t}\|^p_{\bbH^2}\Big]\leq C.
\end{align*}
\end{prop}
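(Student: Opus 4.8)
The plan is to mimic the structure of the proof of Proposition \ref{H^1regularity}, but now using the functional $f(u)=\|\Delta u\|^2+\lambda\langle\Delta u,|u|^2u\rangle$ as a surrogate for $\|u\|_{\bbH^2}^2$ in place of $\HH$. First I would record the coercivity/equivalence estimate: by the Gagliardo--Nirenberg and Young inequalities one has $\|u\|_{L^\infty}\le C\|u\|^{1/2}\|\nabla u\|^{1/2}$ and hence $|\langle\Delta u,|u|^2u\rangle|\le\frac12\|\Delta u\|^2+C\|u\|_{L^6}^6\le\frac12\|\Delta u\|^2+C\|\nabla u\|^2\|u\|^4$, so that, invoking Lemma \ref{Hnorm} and Proposition \ref{H^1regularity}, $\frac12\|\Delta u_{t,\underline t}^{\cdot,N}\|^2-C\le f(u_{t,\underline t}^{\cdot,N})\le\frac32\|\Delta u_{t,\underline t}^{\cdot,N}\|^2+C$ with $C$ depending on $\bbE[\sup\|u^{\cdot,N}\|_{\bbH^1}^q]$ for suitable $q$; thus controlling $\bbE[\sup f]$ (and its $p/2$-th power) is equivalent to controlling $\bbE[\sup\|u\|_{\bbH^2}^p]$. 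One also needs $D f(u)(v)=2\langle\Delta u,\Delta v\rangle+\lambda\langle\Delta v,|u|^2u\rangle+\lambda\langle\Delta u, 2|u|^2v+u^2\bar v\rangle$ and the corresponding $D^2f$.

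Next I would split the analysis over a subinterval $T_m$ exactly as before. For the deterministic flow \eqref{Itoform1}, apply the chain rule to $f(u_{t,m}^{D,N})$, substitute $\rmd u^{D,N}_{t,m}=\bfi\Delta u^{D,N}_{t,m}\rmd t+\bfi\lambda S^N(t-t_m)P^N|u_m^N|^2u_m^N\rmd t$, and observe the crucial cancellation: the "exact-flow" part $\bfi\Delta u+\bfi\lambda|u|^2u$ would make $\frac{\rmd}{\rmd t}f=0$ (this is the conserved quantity of the cubic NLS), so only the consistency error $\bfi\lambda(S^N(t-t_m)P^N|u_m^N|^2u_m^N-|u_{t,m}^{D,N}|^2u_{t,m}^{D,N})$ survives. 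Estimating this error requires the $\bbH^2$ analogue of \eqref{holder}, namely $\|u_{t,m}^{D,N}-u_m^N\|\le C(t-t_m)^{1/2}\|u_m^N\|_{\dot\bbH^1}$ together with $\|u_{t,m}^{D,N}-u_m^N\|_{\dot\bbH^1}\le C(t-t_m)^{1/2}\lambda_N^{1/2}\|u_m^N\|_{\dot\bbH^1}$ and $\|u_{t,m}^{D,N}\|_{\dot\bbH^2}\le C\|u_m^N\|_{\dot\bbH^2}$, all obtained from $\|(S(t)-\mathrm{Id})u\|\le Ct^{1/2}\|u\|_{\dot\bbH^1}$, $\|S(t)\|_{\mathcal L(\bbH;\bbH)}=1$, the inverse inequality $\|P^Nu\|_{\dot\bbH^s}\le\lambda_N^{s/2}\|P^Nu\|$ and the cubic difference formula. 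After bounding nonlinear terms by Sobolev embeddings ($\bbH^1\hookrightarrow L^\infty$, Gagliardo--Nirenberg) and using Assumption \ref{assump_step2} to absorb the resulting powers of $\tau_m^{1/2-\gamma}\lambda_N$ and $\tau_m^\gamma\HH(u_m^N)$, one should arrive at an inequality of the form $f(u_{t_{m+1},m}^{D,N})\le f(u_m^N)+C\tau_m^{1+\gamma}(1+\|u_m^N\|_{\bbH^2}^2)\le f(u_m^N)+C\tau_m^{1+\gamma}(1+f(u_m^N))+C\tau_m$, where $C$ depends on the already-established $\bbH^1$ bounds.

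For the stochastic flow \eqref{Itoform2}, apply the It\^o formula to $f(u_{t,m}^{S,N})$; the linear multiplicative structure $\rmd u=-\frac\epsilon2 P^NF_Qu\,\rmd t-\bfi\sqrt\epsilon P^Nu\,\rmd W$ produces, after taking expectations (the martingale part vanishes in mean), drift and It\^o-correction terms that are all controlled by $\|Q^{1/2}\|_{\mathcal L_2^2}^2$ times quantities like $\|\Delta u\|\,\|u\|_{\bbH^2}$, $\|u\|_{L^\infty}^2\|\Delta u\|\|u\|$, etc., each estimable by $C\|Q^{1/2}\|_{\mathcal L_2^2}^2(\|u_{r,m}^{S,N}\|_{\bbH^2}^2+1)$ via Sobolev embedding and Proposition \ref{H^1regularity}; here one uses $F_Q\in W^{2,\infty}$ (a consequence of $Q^{1/2}\in\mathcal L_2(H;H^2)$). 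Combining with the deterministic step, iterating over $m$ using $\underline t$-notation, and invoking Remark \ref{rmktau_min} to control $\sum_m\tau_m\le T$ and $\sum_m\tau_m^{1+\gamma}\le T$, one gets $\bbE[\HH\text{-type functional}]$... more precisely $\sup_{t\in[0,T]}\bbE[f(u_{t,\underline t}^{S,N})]\le(\bbE[f(u_0^N)]+C)e^{CT}$ by Gr\"onwall. To upgrade $\sup_t\bbE$ to $\bbE\sup_t$ and to $p>2$, use the Burkholder--Davis--Gundy inequality on the martingale terms exactly as in \eqref{BDGsup}, then the $L^p$ version follows by the same It\^o-formula argument applied to $(f+C)^{p/2}$ (or by a standard higher-moment bootstrap), which I would merely indicate rather than carry out. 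Finally, the equivalence $f\simeq\|\cdot\|_{\bbH^2}^2$ from the first step converts this into the claimed bound.

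I expect the main obstacle to be the deterministic step: one must exhibit the NLS conservation-law cancellation at the level of $f$ and then bound the consistency error so that every term carries at least a factor $\tau_m^{1+\gamma}$ after the adaptive constraints \eqref{condition3}--\eqref{condition4} are used — this requires carefully matching each power of $\lambda_N$ with an equal power of $\tau_m^{1/2}$ (plus, for the Hamiltonian-type term $\|\nabla u_m^N\|^2\lesssim\HH(u_m^N)$, one factor $\tau_m^\gamma$), and the cubic nonlinearity at the $\bbH^2$ level generates more and higher-order terms than in Proposition \ref{H^1regularity}. The stochastic step and the BDG/$L^p$ upgrade are routine given the machinery already developed.
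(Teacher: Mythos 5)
Your overall architecture (work with $f$, treat the deterministic and stochastic flows on each $T_m$ separately, close with Gr\"onwall, then BDG and an It\^o argument for $p>2$) is the paper's, but two points need repair. First, the ``crucial cancellation'' you invoke is not available: $f(u)=\|\Delta u\|^2+\lambda\<\Delta u,|u|^2u\>$ is \emph{not} a conserved quantity of the cubic NLS flow (the genuine second-order invariant has different coefficients and an extra $|u|^6$-type term), and in the scheme the nonlinearity is frozen at $u^N_m$, so after expanding $\Delta(|u|^2u)$ several terms survive that have no difference structure and no $\tau_m$-smallness at all, e.g.\ $\<\Delta u^{D,N}_{t,m},\bfi\lambda(4u^N_m|\nabla u^N_m|^2+2\overline{u^N_m}(\nabla u^N_m)^2)\>$ and the term $I_8$ in the paper. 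Consequently your claimed bound $f(u^{D,N}_{t_{m+1},m})\le f(u^N_m)+C\tau_m^{1+\gamma}(1+f(u^N_m))+C\tau_m$ is not obtainable (and a deterministic $C$ ``depending on the $\bbH^1$ bounds'' is not legitimate, since those are only moment bounds); what one actually gets, and what suffices, is an increment of size $\tau_m$ times $\|\Delta u^{D,N}_{t,m}\|^2$ plus explicit powers of $\bbH^1$-norms, kept random inside the integral and only averaged at the end, as in the paper's estimate leading to \eqref{f(u^{D,N}_{m+1})}.

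Second, and more importantly, your ``iterate and apply Gr\"onwall'' step hides the real difficulty of the adaptive setting. After iteration the error accumulates through $\bbE\big[\sum_{k=0}^m f(u^N_k)\tau_k\big]$, where $\tau_k=\tau(u^N_k)$ is random and $\mathcal F_{t_k}$-measurable; you can neither factor this expectation nor replace it directly by $\int_0^t\bbE[f(u^{S,N}_{r,\underline r})]\rmd r$, and a discrete Gr\"onwall is blocked by the random number of steps and random weights. The paper devotes a separate argument to exactly this point (the claim \eqref{Eu^NtoEu^SN}): it uses the step-ratio bound $\tau_k/\tau_{k-1}\le T/\tau_{min}$ to shift the weight to the previous interval, writes $\|u^N_k\|^2_{\bbH^2}\tau_{k-1}$ in terms of $\int_{t_{k-1}}^{t_k}\|u^{S,N}_{r,k-1}\|^2_{\bbH^2}\rmd r$ plus a remainder, and controls the remainder $\bbE\|u^{S,N}_{r,k-1}-u^{S,N}_{t_k,k-1}\|^2_{\bbH^2}\le C\lambda_N^2\bbE[\tau_{k-1}]\le C$ by a conditional-expectation version of the It\^o isometry (needed because the endpoint $t_k$ is itself random, though $\mathcal F_{t_{k-1}}$-measurable) together with the inverse inequality and Assumption \ref{assump_step2}. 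Without this discrete-to-continuous conversion your Gr\"onwall step does not close, so this is a genuine gap in the proposal rather than a routine omission.
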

\begin{proof}
Simple calculations give that
\begin{align*}
Df(u)(v)&=2\<\Delta u,\Delta v\>+2\lambda \<\Delta u,u\mathrm{Re}(\bar{u}v)\>+\lambda \<\Delta u,|u|^2v\>+\lambda \<\Delta v,|u|^2u\>,\\
D^2f(u)(v,w)&=2\<\Delta v,\Delta w\>+2\lambda \<\Delta u,w\mathrm{Re}(\bar{u}v)\>+2\lambda \<\Delta w,u\mathrm{Re}(\bar{u}v)\>+2\lambda \<\Delta u,u\mathrm{Re}(\bar{v}w)\>\\
&\quad +2\lambda \<\Delta u,v\mathrm{Re}(\bar{u}w)\>+\lambda\<\Delta w,|u|^2v\>+2\lambda \<\Delta v,u\mathrm{Re}(\bar{u}w)\>+\lambda\<\Delta v,|u|^2w\>.
\end{align*}
\textit{Step $1$.} By the chain rule, we obtain that for $t\in T_m,$
\begin{align*}
&\quad f(u^{D,N}_{t_{m+1},m})-f(u^N_m)=\int_{t_m}^{t_{m+1}}Df(u^{D,N}_{t,m})\,\rmd u^{D,N}_{t,m}=\int_{t_m}^{t_{m+1}}\Big[2\Big\<\Delta u^{D,N}_{t,m},\bfi \lambda\Delta \big(S^N(t-t_m)|u^N_m|^2u^N_m\big)\Big\> \\&\quad +2\lambda\Big\<\Delta u^{D,N}_{t,m},u^{D,N}_{t,m}\mathrm{Re}\big[\overline{u^{D,N}_{t,m}}\big(\bfi\Delta u^{D,N}_{t,m}+\bfi \lambda S^N(t-t_m)|u^N_m|^2u^N_m\big)\big]\Big\>\\
&\quad+ \lambda \Big\<\Delta u^{D,N}_{t,m},|u^{D,N}_{t,m}|^2\bfi \lambda S^N(t-t_m)|u^N_m|^2u^N_m\Big\>\\
&\quad +\lambda \Big\<|u^{D,N}_{t,m}|^2u^{D,N}_{t,m},\bfi \Delta^2  u^{D,N}_{t,m}+\bfi \lambda \Delta (S^N(t-t_m)|u^N_m|^2u^N_m)\Big\>\Big]\rmd t.
\end{align*}
Utilizing the fact that $2\mathrm{Re}(\bar{u}v)=u\bar{v}+\bar{u}v$ yields 
\begin{align*}
 &\quad f(u^{D,N}_{t_{m+1},m})-f(u^N_m)\\
 &=\int_{t_m}^{t_{m+1}}\Big[2\Big\<\Delta u^{D,N}_{t,m},\bfi \lambda \Delta[(S^N(t-t_m)-\mathrm{Id})|u^N_m|^2u^N_m]\Big\>+2\Big\<\Delta u^{D,N}_{t,m},\bfi \lambda \Delta (|u^N_m|^2u^N_m)\Big\>\\
&\quad +\Big\<\Delta u^{D,N}_{t,m},\bfi \lambda^2 |u^{D,N}_{t,m}|^2S^N(t-t_m)|u^N_m|^2u^N_m\Big\> +\lambda\Big\<\Delta u^{D,N}_{t,m},-\bfi (u^{D,N}_{t,m})^2\Delta \overline{u^{D,N}_{t,m}}\Big\>\\
&\quad+\lambda\Big\<\Delta u^{D,N}_{t,m},-\bfi \lambda (u^{D,N}_{t,m})^2S^N(-(t-t_m))|u^N_m|^2\overline{u^N_m}\Big\> +\lambda \Big\<\Delta u^{D,N}_{t,m},\bfi \lambda |u^{D,N}_{t,m}|^2S^N(t-t_m)|u^N_m|^2u^N_m\Big\>\\
&\quad+\Big\<\Delta u^{D,N}_{t,m},-\bfi \lambda \Delta (|u^{D,N}_{t,m}|^2u^{D,N}_{t,m})\Big\> +\lambda\Big\<\Delta (|u^{D,N}_{t,m}|^2u^{D,N}_{t,m}),\bfi \lambda S^N(t-t_m)|u^N_m|^2u^N_m\Big\>\,\rmd t\\
&=:\int_{t_m}^{t_{m+1}}\sum_{j=1}^8I_j\,\rmd t.
\end{align*}
Noticing that $\Delta (|u|^2u)=2|u|^2\Delta u+4u|\nabla u|^2+2\bar{u}(\nabla u)^2+u^2\Delta \bar{u},$ we arrive at
\begin{align*}
I_2+I_4+I_7&=\Big\<\Delta u^{D,N}_{t,m},\bfi \lambda \Delta (|u^N_m|^2u^N_m-|u^{D,N}_{t,m}|^2u^{D,N}_{t,m})\Big\>+\Big\<\Delta u^{D,N}_{t,m},\bfi \lambda (2|u^N_m|^2\Delta u^N_m+4u^N_m|\nabla u^N_m|^2\\
&\quad +2\overline{u^N_m}(\nabla u^N_m)^2+(u^N_m)^2\Delta \overline{u^N_m})\Big\>-\Big\<\Delta u^{D,N}_{t,m},\bfi \lambda (u^{D,N}_{t,m})^2\Delta \overline{u^{D,N}_{t,m}}\Big\>.
\end{align*}
 It follows from the inverse inequality, the Sobolev embedding inequality $\|u\|_{L^{\infty}(\mathcal{O})}\leq C\|u\|_{\bbH^1},u\in \bbH^1$ and the Young inequality that 
 \begin{align*}
\Big\<\Delta u^{D,N}_{t,m},\bfi\lambda \Delta(|u^N_m|^2u^N_m-|u^{D,N}_{t,m}|^2u^{D,N}_{t,m})\Big\>&\leq C\|\Delta u^{D,N}_{t,m}\|\lambda_N(\|u^N_m\|^2_{L^{\infty}(\OO)}+\|u^{D,N}_{t,m}\|^2_{L^{\infty}(\OO)})\|u^N_m-u^{D,N}_{t,m}\|\\
&\leq C\|\Delta u^{D,N}_{t,m}\|\lambda_N\tau^{\frac{1}{2}}_m(\|u^N_m\|^2_{{\bbH}^1}+\|u^{D,N}_{t,m}\|^2_{\bbH^1})\|u^N_m\|_{\dot{\bbH}^1}\\
&\leq C(\|\Delta u^{D,N}_{t,m}\|^2+\|u^N_m\|^6_{\bbH^1}+\|u^{D,N}_{t,m}\|^6_{\bbH^1}),
\end{align*}
where we have used the assumption \eqref{condition3} so that $\lambda _N\tau^{\frac{1}{2}}_m<\infty.$
Similar techniques, combining the fact that $\<u,\bfi|v|^2u\>=0$ give 
\begin{align*}
&\quad 2\Big\<\Delta u^{D,N}_{t,m},\bfi \lambda |u^N_m|^2\Delta u^N_m\Big\>=2\Big\<\Delta u^{D,N}_{t,m},\bfi\lambda |u^N_m|^2(\Delta u^N_m-\Delta u^{D,N}_{t,m})\Big\>\\
&\leq C\|\Delta u^{D,N}_{t,m}\|\|u^N_m\|^2_{L^{\infty}(\OO)}\lambda_N\|u^N_m-u^{D,N}_{t,m}\|\leq C(\|\Delta u^{D,N}_{t,m}\|^2+\|u^N_m\|^6_{{\bbH}^1}).
\end{align*}
And it can be shown that
\begin{align*}
&\quad \Big\<\Delta u^{D,N}_{t,m},\bfi \lambda \big((u^N_m)^2\Delta \overline{u^N_m}-(u^{D,N}_{t,m})^2\Delta \overline{u^{D,N}_{t,m}}\big)\Big\>\\
&=\Big\<\Delta u^{D,N}_{t,m},\bfi \lambda \big((u^N_m)^2-(u^{D,N}_{t,m})^2\big)\Delta \overline{u^N_m}\Big\>+\Big\<\Delta u^{D,N}_{t,m},\bfi\lambda (u^{D,N}_{t,m})^2\Delta\big(\overline{u^N_m-u^{D,N}_{t,m}}\big)\Big\>\\
&\leq \|\Delta u^{D,N}_{t,m}\|\|u^N_m-u^{D,N}_{t,m}\|(\|u^N_m\|_{\bbH^1}+\|u^{D,N}_{t,m}\|_{\bbH^1})\|\Delta u^{D,N}_{t,m}\|_{L^{\infty}(\OO)}+\|\Delta u^{D,N}_{t,m}\|\|u^{D,N}_{t,m}\|^2_{\bbH^1}\|\Delta (u^N_m-u^{D,N}_{t,m})\|\\
&\leq C\|\Delta u^{D,N}_{t,m}\|\lambda_N\tau^{\frac{1}{2}}_m\big[\|u^N_m\|_{\bbH^1}\|u^{D,N}_{t,m}\|_{\bbH^1}(\|u^N_m\|_{\bbH^1}+\|u^{D,N}_{t,m}\|_{\bbH^1})+\|u^N_m\|_{\bbH^1}\|u^{D,N}_{t,m}\|^2_{\bbH^1}\big]\\
&\leq C(\|\Delta u^{D,N}_{t,m}\|^2+\|u^N_m\|^6_{\bbH^1}+\|u^{D,N}_{t,m}\|^6_{\bbH^1})
\end{align*}
and
\begin{align*}
&\quad \Big\<\Delta u^{D,N}_{t,m},\bfi\lambda (4u^N_m|\nabla u^N_m|^2+2\overline{u^N_m}(\nabla u^N_m)^2)\Big\>\leq C\|\Delta u^{D,N}_{t,m}\|\|u^N_m\|_{\bbH^1}\|\nabla u^N_m\|^2_{L^4(\OO)}\\
&\leq C\|\Delta u^{D,N}_{t,m}\|\|u^N_m\|_{\bbH^1}\|\Delta u^N_m\|^{\frac{1}{2}}\|\nabla u^N_m\|^{\frac{3}{2}}\leq C\|\Delta u^{D,N}_{t,m}\|\|u^N_m\|^{\frac{5}{2}}_{\bbH^1}\big(\|\Delta u^{D,N}_{t,m}\|^{\frac{1}{2}}+\|\Delta (u^{D,N}_{t,m}-u^N_m)\|^{\frac{1}{2}}\big)\\
&\leq C(\|\Delta u^{D,N}_{t,m}\|^2+\|u^N_m\|^{10}_{\bbH^1}+1).
\end{align*}
Moreover, the remaining terms can be estimated as follows:
\begin{align*}
I_1\leq C\|\Delta u^{D,N}_{t,m}\|\lambda_N\tau^{\frac{1}{2}}_m\|u^N_m\|^3_{\bbH^1}\leq C\|\Delta u^{D,N}_{t,m}\|^2+C\|u^N_m\|^6_{\bbH^1},
\end{align*}
\begin{align*}
I_3+I_5+I_6\leq C\|\Delta u^{D,N}_{t,m}\|\|u^{D,N}_{t,m}\|^2_{\bbH^1}\|u^N_m\|^3_{L^6(\OO)}\leq C\|\Delta u^{D,N}_{t,m}\|^2+C\|u^N_m\|^6_{\bbH^1}+C\|u^{D,N}_{t,m}\|^6_{\bbH^1}
\end{align*}
and
\begin{align*}
I_8=-\lambda\Big\<\nabla (|u^{D,N}_{t,m}|^2u^{D,N}_{t,m}),\bfi\lambda S^N(t-t_m)\nabla (|u^N_m|^2u^N_m)\Big\>\leq C\|u^{D,N}_{t,m}\|^3_{\bbH^1}\|u^N_m\|^3_{\bbH^1}\leq C\|u^{D,N}_{t,m}\|^6_{\bbH^1}+C\|u^N_m\|^6_{\bbH^1},
\end{align*}
where we use the integration by parts formula and the fact that $\bbH^s$ is an algebra for $s> \frac{1}{2}$, i.e., $\|uv\|_{\bbH^s}\leq C\|u\|_{\bbH^s}\|v\|_{\bbH^s}$ for $u,v\in\bbH^s.$

Combining terms $I_i,i=1,\ldots,8,$ we derive
\begin{align*}
f(u^{D,N}_{t_{m+1},m})-f(u^N_m)\leq C\int_{t_m}^{t_{m+1}}(\|\Delta u^{D,N}_{t,m}\|^2+\|u^N_m\|^{10}_{\bbH^1}+\|u^{D,N}_{t,m}\|^6_{\bbH^1}+1)\rmd t.
\end{align*}
Since the Gagliardo--Nirenberg inequality and the Young inequality give 
\begin{align}\label{fgeDeltau}
f(u)\ge \|\Delta u\|^2-\|\Delta u\|\|u\|^3_{L^6(\OO)}\ge \frac{1}{2}(\|\Delta u\|^2-\|u\|^6_{L^6(\OO)})\ge \frac{1}{2}\|\Delta u\|^2-C\|u\|^2_{\bbH^1}\|u\|^4,
\end{align}
we obtain
\begin{align*}
f(u^{D,N}_{t_{m+1},m})-f(u^N_m)\leq C\int_{t_m}^{t_{m+1}}(f(u^{D,N}_{t,m})+\|u^N_m\|^{10}_{\bbH^1}+\|u^{D,N}_{t,m}\|^6_{\bbH^1}+1)\rmd t,
\end{align*}
which implies
\begin{align}\label{f(u^{D,N}_{m+1})}
f(u^{D,N}_{t_{m+1},m})\leq \Big(f(u^N_m)+C\int_{t_m}^{t_{m+1}}(\|u^N_m\|^{10}_{\bbH^1}+\|u^{D,N}_{t,m}\|^6_{\bbH^1}+1)\rmd t \Big)e^{C\tau_m}.
\end{align}
\textit{Step $2$.} Applying  the It\^o formula yields
\begin{align*}
 & f(u^{S,N}_{t,m})-f(u^{D,N}_{t_{m+1},m})=\int_{t_m}^t2\Big\<\Delta u^{S,N}_{r,m},\Delta (-\frac{\epsilon}{2}P^NF_Qu^{S,N}_{r,m}\rmd r-\bfi \sqrt \epsilon P^Nu^{S,N}_{r,m}\rmd W(r))\Big\>\\
  &+2\lambda \int_{t_m}^t\Big\<\Delta u^{S,N}_{r,m},u^{S,N}_{r,m}\mathrm{Re}(\overline {u^{S,N}_{r,m}}(-\frac{\epsilon}{2}P^NF_Qu^{S,N}_{r,m}\rmd r-\bfi \sqrt \epsilon P^Nu^{S,N}_{r,m}\rmd W(r)))\Big\>\\
 &+\lambda\int_{t_m}^t\Big\<\Delta u^{S,N}_{r,m},|u^{S,N}_{r,m}|^2(-\frac{\epsilon}{2}P^NF_Qu^{S,N}_{r,m}\rmd r-\bfi \sqrt \epsilon P^Nu^{S,N}_{r,m}\rmd W(r))\Big\>\\
 &+\lambda \int_{t_m}^t\Big\<|u^{S,N}_{r,m}|^2u^{S,N}_{r,m},\Delta (-\frac{\epsilon}{2}P^NF_Qu^{S,N}_{r,m}\rmd r-\bfi \sqrt \epsilon P^Nu^{S,N}_{r,m}\rmd W(r))\Big\>\\
 &+\lambda \epsilon \sum_{k=1}^{\infty}\int_{t_m}^t\Big\<\Delta P^N(\bfi u^{S,N}_{r,m}Q^{\frac{1}{2}}e_k),\bfi |u^{S,N}_{r,m}|^2P^Nu^{S,N}_{r,m}Q^{\frac{1}{2}}e_k\Big\>\rmd r\\
 &+\lambda \epsilon \int_{t_m}^t\sum_{k=1}^{\infty}\Big\<\Delta u^{S,N}_{r,m},u^{S,N}_{r,m}|P^Nu^{S,N}_{r,m}Q^{\frac{1}{2}}e_k|^2\Big\>\rmd r+\epsilon \int_{t_m}^t\sum_{k=1}^{\infty}\|\Delta P^N(u^{S,N}_{r,m}Q^{\frac{1}{2}}e_k)\|^2\rmd r\\
 &+2\lambda \epsilon \int_{t_m}^t\sum_{k=1}^{\infty}\Big[\Big\<\Delta u^{S,N}_{r,m},(-\bfi P^Nu^{S,N}_{r,m}Q^{\frac{1}{2}}e_k)\mathrm{Re}(\overline {u^{S,N}_{r,m}}(-\bfi P^Nu^{S,N}_{r,m}Q^{\frac{1}{2}}e_k))\Big>\\
 &+\epsilon\Big\<\Delta (-\bfi P^Nu^{S,N}_{r,m}Q^{\frac{1}{2}}e_k),u^{S,N}_{r,m}\mathrm{Re}\big(\overline {u^{S,N}_{r,m}}(-\bfi P^Nu^{S,N}_{r,m}Q^{\frac{1}{2}}e_k)\big)\Big\>\Big]\rmd r.
\end{align*}
By taking expectation, combining \eqref{f(u^{D,N}_{m+1})} and the fact that $e^{C\tau_m}\leq 1+2C\tau_m$ for $\tau_m\leq T\delta$ with $\delta$ being small, it follows from the fact that $\bbH^s$ is an algebra for $s> \frac{1}{2}$ that
\begin{align*}
\bbE[f(u^{S,N}_{t,m})]&\leq \bbE[f(u^N_m)(1+2C\tau_m)]+C\bbE\Big[(1+2C\tau_m)\Big(\int_{t_m}^{t_{m+1}}(\|u^N_m\|^{10}_{\bbH^1}+\|u^{D,N}_{r,m}\|^6_{\bbH^1}+1)\rmd r\Big)\Big]\\
&\quad +C\bbE\Big[\int_{t_m}^t(\|\Delta u^{S,N}_{r,m}\|^2+\|u^{S,N}_{r,m}\|^6_{\bbH^1}+1)\rmd r\Big]\\
&\leq \bbE[f(u^N_m)(1+2C\tau_m)]+C\bbE\Big[(1+2C\tau_m)\Big(\int_{t_m}^{t_{m+1}}(\|u^N_m\|^{10}_{\bbH^1}+\|u^{D,N}_{r,m}\|^6_{\bbH^1}+1)\rmd r\Big)\Big]\\
&\quad +C\bbE\Big[\int_{t_m}^t(f(u^{S,N}_{r,m})+\|u^{S,N}_{r,m}\|^6_{\bbH^1}+1)\rmd r\Big],
\end{align*}
where we use \eqref{fgeDeltau} in the last step.
By iteration, we derive
\begin{align*}
\bbE[f(u^{S,N}_{t,m})]&\leq \bbE[f(u^N_0)]+C\bbE\Big[\sum_{k=0}^mf(u^N_k)\tau_k\Big]+C\bbE\Big[\sum_{k=0}^{m-1}\int_{t_k}^{t_{k+1}}f(u^{S,N}_{r,k})\rmd r\Big]+C\bbE\Big[\int_{t_m}^tf(u^{S,N}_{r,m})\rmd r\Big]\\
&\quad +C\bbE\Big[\sum_{k=0}^{m}\int_{t_k}^{t_{k+1}}(\|u^N_k\|^{10}_{\bbH^1}+\|u^{D,N}_{r,k}\|^6_{\bbH^1}+1)\rmd r+\sum_{k=0}^{m-1}\int_{t_k}^{t_{k+1}}(\|u^{S,N}_{r,k}\|^6_{\bbH^1}+1)\rmd r\Big]\\
&\quad +C\bbE\Big[\int_{t_m}^t(\|u^{S,N}_{r,m}\|^6_{\bbH^1}+1)\rmd r\Big].
\end{align*}
We claim that for $t\in T_m,$ 
\begin{align}\label{Eu^NtoEu^SN}\bbE\Big[\sum_{k=0}^mf(u^N_k)\tau_k\Big]\leq C\Big(\int_0^t\bbE[f(u^{S,N}_{r,\underline r})]\rmd r+1\Big).
\end{align}
 In fact, 
noticing that for $k=1,2,\ldots,m,$
\begin{align*}
\|u^N_k\|^2_{\bbH^2}\tau_{k-1}\leq 2\int_{t_{k-1}}^{t_k}\|u^{S,N}_{r,k-1}\|^2_{\bbH^2}\rmd r+2\int_{t_{k-1}}^{t_k}\|u^{S,N}_{r,k-1}-u^{S,N}_{t_k,k-1}\|^2_{\bbH^2}\rmd r 
\end{align*}
and $\frac{\tau_k}{\tau_{k-1}}\leq \frac{T\delta}{\tau_{min}\delta}\leq C,$
 we obtain
 \begin{align}\label{u^Ntou^SN}
 &\quad \sum_{k=0}^m\|u^N_k\|^2_{\bbH^2}\tau_k\leq \|u^N_0\|^2_{\bbH^2}\tau_0+C\sum_{k=1}^{m}\|u^N_k\|^2_{\bbH^2}\tau_{k-1}\notag\\
 &\leq \|u^N_0\|^2_{\bbH^2}\tau_0+C\sum_{k=1}^m\Big[\int_{t_{k-1}}^{t_{k}}\|u^{S,N}_{r,k-1}\|^2_{\bbH^2}\rmd r+\int_{t_{k-1}}^{t_k}\|u^{S,N}_{r,k-1}-u^{S,N}_{t_k,k-1}\|^2_{\bbH^2}\rmd r\Big].
 \end{align}
 Utilizing the property of the conditional expectation and the fact that $t_k=t_{k-1}+\tau_{k-1}$ is $\mathcal{F}_{t_{k-1}}$-measurable, one arrives at that for $r\in T_{k-1},$
 \begin{align*}
 \bbE\Big[\big\|\int_{r}^{t_k}P^Nu^{S,N}_{s,k-1}\rmd W(s)\big\|^2\Big]&=\bbE\Big[\bbE\Big[\big\|\int_{r}^{t_k}P^Nu^{S,N}_{s,k-1}\rmd W(s)\big\|^2\Big|\mathcal{F}_{t_{k-1}}\Big]\Big]\\
 &= \bbE\Big[\bbE\Big[\|\int_r^{y}P^N\widehat{u^{S,N}_{s,k-1}}\rmd W(s)\|^2\Big]\Big|_{y=t_k,z_0=u^{S,N}_{t_{k-1},k-1}}\Big]\\
 &\leq \bbE\Big[\bbE\Big[\int_{r}^{y}\sum_{j=1}^{\infty}\|P^N\widehat{u^{S,N}_{s,k-1}}Q^{\frac{1}{2}}e_j\|^2\rmd s\Big]\Big|_{y=t_k,z_0=u^{S,N}_{t_{k-1},k-1}}\Big]\\
 &\leq C\bbE [\tau_{k-1}],
 \end{align*}
 where $\widehat{u^{S,N}_{s,k-1}},s\in T_{k-1}$ is the solution of \eqref{Itoform2} with initial datum $z_0$ at $t_{k-1}$. 
The above inequality  yields 
 \begin{align*}
 \bbE[\|u^{S,N}_{r,k-1}-u^{S,N}_{t_k,k-1}\|^2]\leq C\bbE\Big[\big\|\int_{r}^{t_k}P^NF_Qu^{S,N}_{s,k-1}\rmd s\big\|^2\Big]+C\bbE\Big[\big\|\int_{r}^{t_k}P^Nu^{S,N}_{s,k-1}\rmd W(s)\big\|^2\Big]\leq C\mathbb{E}[\tau_{k-1}].
 \end{align*}
Thus, 
 $\bbE\big[\|u^{S,N}_{r,k-1}-u^{S,N}_{t_k,k-1}\|^2_{\bbH^2}\big]\leq C\lambda^2_N\mathbb{E}[\tau_{k-1}]\leq C$ for $r\in T_{k-1}$, which together with \eqref{fgeDeltau} and \eqref{u^Ntou^SN} gives 
 \eqref{Eu^NtoEu^SN}.
 
  Hence, we derive 
 \begin{align*}
 \bbE[f(u^{S,N}_{t,\underline{t}})]&\leq C\bbE[f(u^N_0)]+C\int_0^t\bbE[f(u^{S,N}_{r,\underline{r}})]\rmd r+\int_0^T\bbE\big[\|u^N_{\underline{r}}\|^{10}_{\bbH^1}+\|u^{D,N}_{r,\underline{r}}\|^6_{\bbH^1}+\|u^{S,N}_{r,\underline{r}}\|^6_{\bbH^1}+1\big]\rmd r,
 \end{align*}
 which implies 
 \begin{align*}
 \sup_{t\in[0,T]}\bbE[f(u^{S,N}_{t,\underline{t}})]\leq C(\bbE[f(u^N_0)]+1)e^{CT}
 \end{align*}
due to the Gr\"onwall inequality.
 
 Moreover, 
 by utilizing the  supremum type inequalities  as in  \eqref{BDGsup}, one can 
  finish the proof for the case of $p=2.$ For the case of $p>2$, the proof is similar by the use of the It\^o formula and is  omitted. 
\end{proof}
\begin{remark}\label{rmk_exp_regu}
The conclusions in Propositions \ref{expHu^N_m} and \ref{H2regularity} still hold for the solution $\{u^D_{\underline t}(t),\,u^S_{\underline t}(t)\}$ of the split equation  \eqref{u^D_tau_m(t)}  and the solution $\{u^{D,N}_{\underline t}(t),\,u^{S,N}_{\underline t}(t)\}$ of the semi-discrete scheme \eqref{semidiscrete} for $t\in[0,T]$, i.e.,
\begin{align*}&\sup_{t\in[0,T]}\bbE\Big[\exp\Big\{\frac{\HH(u^{S}_{\underline t}(t))}{e^{\alpha_{\lambda}t}}\Big\}+\exp\Big\{\frac{\HH(u^{S,N}_{\underline t}(t))}{e^{\alpha_{\lambda}t}}\Big\}\Big]\leq C,\\
&\bbE\Big[\sup_{t\in[0,T]}\Big(\|u^{D}_{\underline t}(t)\|^p_{\bbH^2}+\|u^{S}_{\underline t}(t)\|^p_{\bbH^2}+\|u^{D,N}_{\underline t}(t)\|^p_{\bbH^2}+\|u^{S,N}_{\underline t}(t)\|^p_{\bbH^2}\Big)\Big]\leq C.
\end{align*}
  The proofs are similar as before by considering $\HH(u^D_{\underline t}(t)),\,\HH(u^{S}_{\underline t}(t)),\,f(u^D_{\underline t}(t)),\,f(u^{S}_{\underline t}(t))$ and those of $u^{D,N}_{\underline t}(t),\,u^{S,N}_{\underline t}(t)$ instead, and hence are omitted. 
\end{remark}

\section{Optimal strong convergence order}\label{section3}
In this section, based on the a.s.-uniform boundedness of the mass, the $\bbH^j\,(j=1,2)$-regularity estimates and the $\bbH^1$-exponential integrability of the numerical solution given in Section \ref{section2}, we show the optimal strong convergence order of the adaptive time-stepping scheme \eqref{Itoform}.
\begin{thm}
Under Assumptions \ref{assump_step1} and \ref{assump_step2}, for $p\ge 2$, there exists a constant $C>0$ such that
\begin{align*}
\sup_{0\leq m\leq M_{T}}\|u(t_{m})-u^N_{m}\|_{L^{p}(\Omega;\bbH)}\leq C(\delta^{\frac{1}{2}}+N^{-2}).
\end{align*}
\end{thm}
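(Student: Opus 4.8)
The plan is to decompose the global error along the three stages of the construction --- the Lie--Trotter splitting \eqref{u^D_tau_m(t)}, the spectral Galerkin projection \eqref{semidiscrete}, and the adaptive exponential Euler discretisation \eqref{Itoform} of the deterministic substep. Writing $u^S_{m-1}(t_m)$ (resp.\ $u^{S,N}_{m-1}(t_m)$) for the solution of the split system \eqref{u^D_tau_m(t)} (resp.\ of \eqref{semidiscrete}) sampled at $t_m$, with the convention $u^S_{-1}(t_0)=u^{S,N}_{-1}(t_0)=u_0$, I would write
\[
u(t_m)-u^N_m=\big(u(t_m)-u^S_{m-1}(t_m)\big)+\big(u^S_{m-1}(t_m)-u^{S,N}_{m-1}(t_m)\big)+\big(u^{S,N}_{m-1}(t_m)-u^N_m\big)
\]
and bound the three summands in $L^p(\Omega;\bbH)$, uniformly over $0\le m\le M_T$, by $C\delta^{1/2}$, $CN^{-2}$ and $C\delta$, respectively. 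Since $M_T\le T\tau_{min}^{-1}\delta^{-1}$ a.s.\ (Remark \ref{rmktau_min}) and all objects are $\{\mathcal F_t\}$-adapted, the analysis on the random partition $\{t_m\}$ proceeds with the usual $L^p(\Omega)$ bookkeeping.

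For the splitting error, on each $T_m$ I would subtract the mild formulations of \eqref{Itosense} and \eqref{u^D_tau_m(t)} and run a discrete Gr\"onwall argument whose per-step consistency defect comes from the non-commutativity of the Schr\"odinger group $S(\cdot)$ with the exact flow $v\mapsto e^{-\bfi\sqrt{\ee}(W(\cdot)-W(t_m))}v$ of \eqref{u^D_tau_m(t)_2}, and from freezing the cubic term. Using $\|(S(t)-\mathrm{Id})v\|\le Ct^{1/2}\|v\|_{\dot{\bbH}^1}$, the $\frac12$-H\"older-in-time regularity of $W$ in $H^2$, and the $\bbH^2$-regularity of $u$ (known, cf.\ \cite{CCCH16,Cui_2016,Debussche}) and of $u^S_{\underline t}(t),u^D_{\underline t}(t)$ (Remark \ref{rmk_exp_regu}), the local defect on $T_m$ is $O(\tau_m^{3/2})$ in $L^2(\Omega;\bbH)$; summing, $\sum_m\tau_m^{3/2}\le(\max_m\tau_m)^{1/2}\sum_m\tau_m\le(T\delta)^{1/2}T$, which yields the $O(\delta^{1/2})$ rate, the exponent $\frac12$ matching the optimal temporal H\"older regularity; compare the splitting analyses of \cite{NLSexp,Cui_2019,Liujie13}.

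For the middle (spatial) term I would subtract \eqref{u^D_tau_m(t)} from \eqref{semidiscrete} on each $T_m$: the forcing is $(\mathrm{Id}-P^N)$ applied to $u^D_{\underline t}(t)$ and to the cubic term, controlled by $\|(\mathrm{Id}-P^N)v\|\le CN^{-2}\|v\|_{\bbH^2}$, the fact that $\bbH^s$ is an algebra for $s>\frac12$, and the $\bbH^2$-regularity of Remark \ref{rmk_exp_regu}; the linear S-substep has an $\bbH$-contractive flow and is harmless. Iterating the Gr\"onwall bound over the $M_T$ subintervals gives $O(N^{-2})$. For the last term, \eqref{semidiscrete2} and \eqref{Itoform2} coincide, so only the deterministic substep contributes: the one-step error $\|S^N(\tau_m)(v+\bfi\lambda|v|^2v\,\tau_m)-{\Phi}^{D,N}_{m,\tau_m}v\|\le C\tau_m^2(1+\|v\|_{\bbH^2})^{5}$ accumulates, using $\|{\Phi}^{S,N}_{m,\tau_m}v\|\le\|v\|$ from \eqref{Ito_app_1} and the local Lipschitz continuity of ${\Phi}^{D,N}_{m,\tau_m}$, to $\sum_m\tau_m^2\le T^2\delta$, i.e.\ $O(\delta)$; the $\bbH^2$-moments here come from Proposition \ref{H2regularity} and Remark \ref{rmk_exp_regu}.

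The main obstacle is that the cubic nonlinearity is only locally Lipschitz, so the constant in every Gr\"onwall estimate is the random quantity $C(\|\cdot\|_{\bbH^1}^2+\cdots)$ and the ensuing factor $\exp\{C\int_0^T(\|\cdot\|^2_{\bbH^1}+\cdots)\,\rmd r\}$ is not deterministically bounded; the work lies in passing to $L^p(\Omega;\bbH)$ without losing in the rate. I would separate this factor by H\"older's inequality, note (by $\|\nabla v\|^2\le 4\HH(v)+\|v\|^6$, Lemma \ref{Hnorm}, and Jensen's inequality in time) that its $L^q(\Omega)$-norm is controlled once $\sup_{t\in[0,T]}\bbE[\exp\{q\HH(\cdot)\}]<\infty$ for arbitrarily large $q$, and invoke the $\bbH^1$-exponential integrability of $u$ (cf.\ \cite{Cui_2019,Cui_2016}) and of the numerical and intermediate solutions (Proposition \ref{expHu^N_m}, Remark \ref{rmk_exp_regu}), obtained via Lemma \ref{Lemma_exp} applied with the rescaled functional $q\HH$. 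It is this use of the Section~\ref{section2} exponential integrability --- rather than any fresh a priori bound --- that makes the locally-Lipschitz nonlinearity tractable in $L^p(\Omega)$; the remaining dependence on $\ee$ and $p$ is then routine.
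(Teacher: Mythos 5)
Your decomposition is exactly the paper's: the same three intermediate processes (split equation, spectral semi-discretization, fully discrete scheme), with the splitting error taken from \cite[Theorem 2.2]{Cui_2019} in the paper and the other two terms handled by Gr\"onwall arguments whose random locally-Lipschitz constants are tamed by the $\bbH^1$-exponential integrability and the $\bbH^j$-regularity of Section \ref{section2}; the slightly different per-term rates you announce ($O(\delta)$ and a pure $O(N^{-2})$ for the last two terms, versus the paper's mixed $O(\delta^{\frac12}+N^{-2})$ bounds in its Steps 1--2) are immaterial for the final estimate, and your plan to reprove the splitting error directly rather than cite it is a routing difference only.

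There is, however, a genuine gap in your treatment of the exponential Gr\"onwall factor, which is the crux of the proof. You keep the exponent quadratic, $\exp\{C\int_0^T\|\cdot\|^2_{\bbH^1}\,\rmd r\}$, and then assert that its $L^q(\Omega)$-norm is controlled because $\sup_{t\in[0,T]}\bbE[\exp\{q\HH\}]<\infty$ for \emph{arbitrarily large} $q$, ``obtained via Lemma \ref{Lemma_exp} applied with the rescaled functional $q\HH$''. That rescaling does not work: in Lemma \ref{Lemma_exp} the term $\frac{1}{2e^{\alpha(s-\underline t)}}\|\sigma^*D(q\HH)\|^2=q^2\,\|\sigma^*D\HH\|^2/(2e^{\alpha(s-\underline t)})$ scales quadratically in $q$, so verifying the hypothesis forces $\alpha=\alpha(q)\sim Cq$, and the conclusion then only bounds $\bbE[\exp\{q\HH(\cdot)/e^{\alpha(q)t}\}]$, whose effective coefficient $q\,e^{-Cqt}$ tends to $0$ as $q\to\infty$ for any $t>0$; no large-coefficient exponential moment follows (nor should one be expected, since the quadratic variation of $\HH$ along the stochastic substep is itself of order $\HH$). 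Proposition \ref{expHu^N_m} and Remark \ref{rmk_exp_regu} provide only the fixed weight $e^{-\alpha_\lambda t}$. The paper avoids the problem precisely here: the Gr\"onwall rate actually comes from $\|u\|^2_{L^{\infty}(\OO)}\leq C\|\nabla u\|\,\|u\|$ together with the a.s.\ mass bound of Lemma \ref{Hnorm}, so the exponent is only \emph{linear} in $\|\nabla u\|$, namely $C\sum_j\tau_j\|\nabla u^N_j\|$; then Young's inequality $\|\nabla u\|\leq\rho\|\nabla u\|^2+C(\rho)$ with the small choice $\rho=\frac{1}{16pT\exp\{\alpha_\lambda T\}}$, convexity of the exponential in time, and the fixed-weight bound of Proposition \ref{expHu^N_m} (and Remark \ref{rmk_exp_regu} for the intermediate processes) yield the $L^{4p}(\Omega)$ estimates \eqref{sumexp}--\eqref{sumexp_2}. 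Your argument goes through once the ``arbitrarily large $q$'' claim is replaced by this small-$\rho$ reduction; as written, that step fails.
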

\begin{proof}
Noting that $u(t_{m})-u^N_{m}=\big(u^{S,N}_{m-1}(t_{m})-u^N_{m}\big)+\big(u^{S}_{m-1}(t_{m})-u^{S,N}_{m-1}(t_{m})\big)+\big(u(t_{m})-u^{S}_{m-1}(t_{m})\big),$ we split the estimate of the strong error into three steps.  

\textit{Step $1$.}
We first estimate the strong error between  the semi-discrete scheme  and the fully discrete scheme, i.e. $\big\|u^{S,N}_{m-1}(t_{m})-u^N_{m}\big\|_{L^p(\Omega;\bbH)}=:\|{E}_{m}\|_{L^p(\Omega;\bbH)}$. Similarly to the proof of \eqref{Ito_app_1}, the differential form 
\begin{align*}
\rmd (u^{S,N}_{m}(t)-u^{S,N}_{t,m})=-\frac{\epsilon}{2}P^NF_Q(u^{S,N}_{m}(t)-u^{S,N}_{t,m})\rmd t-\bfi \sqrt \epsilon P^N(u^{S,N}_m(t)-u^{S,N}_{t,m})\rmd W(t),
\end{align*}
combining the It\^o formula yields that $\|u^{S,N}_m(t)-u^{S,N}_{t,m}\|^2\leq \|u^{D,N}_m(t_{m+1})-u^{D,N}_{t_{m+1},m}\|^2.$
Since 
\begin{align}\label{u_tau_m-u_tm}
u^{D,N}_{m}(t)-u^{D,N}_{t,m}&=E_{m}+\int_{t_m}^t\bfi \Delta (u^{D,N}_{m}(r)-u^{D,N}_{r,m})\rmd r\notag\\
&\quad +\int_{t_m}^t\bfi\lambda P^N\Big(|u^{D,N}_{m}(r)|^2u^{D,N}_{m}(r)-S^N(r-t_m)|u^{D,N}_{t_m,m}|^2u^{D,N}_{t_m,m}\Big)\rmd r,
\end{align}
we have
\begin{align*}
&\|{E}_{m+1}\|^2\leq \|u^{D,N}_{m}(t_{m+1})-u^{D,N}_{t_{m+1},m}\|^2=\|{E}_m\|^2+2\Big\<{E}_m,\int_{t_m}^{t_{m+1}}\bfi \Delta (u^{D,N}_{m}(t)-u^{D,N}_{t,m})\rmd t\Big\>\\
&+2\Big\<{E}_m,\bfi\lambda \int_{t_m}^{t_{m+1}}P^N(|u^{D,N}_{m}(t)|^2u^{D,N}_{m}(t)-S^N(t-t_m)|u^{D,N}_{t_m,m}|^2u^{D,N}_{t_m,m})\rmd t\Big\>\\
&+\Big\|\int_{t_m}^{t_{m+1}}\bfi \Delta (u^{D,N}_{m}(t)-u^{D,N}_{t,m})\rmd t+\bfi\lambda \int_{t_m}^{t_{m+1}}P^N\Big(|u^{D,N}_{m}(t)|^2u^{D,N}_{m}(t)-S^N(t-t_m)|u^{D,N}_{t_m,m}|^2u^{D,N}_{t_m,m}\Big)\rmd t\Big\|^2\\
&=:\|{E}_m\|^2+II_1+II_2+II_3.
\end{align*}
For the term $II_1,$ using \eqref{u_tau_m-u_tm} and the integration by parts formula, and combining the Gagliardo--Nirenberg inequality $\|u\|^3_{L^6(\mathcal{O})}\leq C\|\nabla u\|\|u\|^2,\,u\in \bbH$ give that 
\begin{align*}
II_1&=2\Big\<\Delta {E}_m,-\int_{t_m}^{t_{m+1}}\int_{t_m}^t\Delta (u^{D,N}_{m}(r)-u^{D,N}_{r,m})\rmd r\rmd t\\
&\quad -\lambda \int_{t_m}^{t_{m+1}}\int_{t_m}^tP^N\Big(|u^{D,N}_{m}(r)|^2u^{D,N}_{m}(r)-S^N(r-t_m)|u^{D,N}_{r,m}|^2u^{D,N}_{r,m}\Big)\rmd r\rmd t\Big\>\\
&\leq C\|{E}_m\|_{\bbH^2}\tau^2_m\Big[\sup_{t\in T_m}\|u^{D,N}_{m}(t)-u^{D,N}_{t,m}\|_{\bbH^2}+\sup_{t\in T_m}\|u^{D,N}_{m}(t)\|^3_{L^6(\OO)}+\sup_{t\in T_m}\|u^{D,N}_{t,m}\|^3_{L^6(\OO)}\Big]\\
&\leq C\tau^2_m\Big[\sup_{t\in T_m}\|u^{D,N}_{m}(t)\|^2_{\bbH^2}+\sup_{t\in T_m}\|u^{D,N}_{t,m}\|^2_{\bbH^2}+1\Big].
\end{align*}
For the term $II_2,$ it follows from the property $\|(S(t)-\mathrm{Id})u\|\leq Ct^{\frac{1}{2}}\|u\|_{\dot{\bbH}^1}$ that
\begin{align*}
II_2&=2\Big\<{E}_m,\bfi\lambda \int_{t_m}^{t_{m+1}}P^N\Big((|u^{D,N}_{m}(t)|^2+|u^{D,N}_{t_m,m}|^2)(u^{D,N}_{m}(t)-u^{D,N}_{t_m,m})+u^{D,N}_{m}(t)u^{D,N}_{t_m,m}(\overline{u^{D,N}_{m}(t)-u^{D,N}_{t_m,m}})\\
&\quad +(\mathrm{Id}-S^N(t-t_m))|u^{D,N}_{t_m,m}|^2u^{D,N}_{t_m,m}\Big)\rmd t\Big\>\\
&\leq C\|{E}_m\|\int_{t_m}^{t_{m+1}}\Big[(\|u^{D,N}_{m}(t)\|^2_{L^{\infty}(\OO)}+\|u^{D,N}_{t_m,m}\|^2_{L^{\infty}(\OO)})\|u^{D,N}_{m}(t)-u^{D,N}_{t_m,m}\|+\tau_m^{\frac{1}{2}}\|u^{D,N}_{t_m,m}\|^3_{\bbH^1}\Big]\rmd t.
\end{align*}
By the inverse inequality $\|P^Nu\|_{\dot{\bbH}^s}\leq \lambda^{\frac{s}{2}}_N\|P^Nu\|,\,u\in\bbH$, $\|u^{D,N}_{m}(t)-u^{D,N}_{m}(t_m)\|\leq \|(S^N(t-t_m)-\mathrm{Id})u^{D,N}_m(t_m)\|+\|\int_{t_m}^tS^N(t-s)P^N|u^{D,N}_m(s)|^2u^{D,N}_m(s)\rmd s\|\leq C(t-t_m)^{\frac{1}{2}}\sup_{t\in T_m}\|u^{D,N}_{m}(t)\|_{\bbH^1}$,  the Minkowskii inequality and the Young inequality, we derive 
\begin{align*}
II_2&\leq C\|{E}_m\|\int_{t_m}^{t_{m+1}}\big(\|u^{D,N}_{m}(t_m)\|^2_{L^{\infty}(\OO)}+\|u^{D,N}_{t_m,m}\|^2_{L^{\infty}(\OO)}+\tau_m\lambda_N^2\sup_{t\in T_m}\|u^{D,N}_{m}(t)\|^2\big)\times \\
&\quad\; (\tau^{\frac{1}{2}}_m\sup_{t\in T_m}\|u^{D,N}_{m}(t)\|_{\bbH^1}+\|E_m\|)\rmd t+C\|E_m\|\int_{t_m}^{t_{m+1}}\tau^{\frac{1}{2}}_m\|u^{D,N}_{t_m,m}\|^3_{\bbH^1}\rmd t\\
&\leq C\tau_m\|{E}_m\|^2(\|u^{D,N}_{m}(t_m)\|^2_{L^{\infty}(\OO)}+\|u^{D,N}_{t_m,m}\|^2_{L^{\infty}(\OO)}+1)+C\tau^2_m(\sup_{t\in T_m}\|u^{D,N}_{m}(t)\|^6_{\bbH^1}+\|u^{D,N}_{t_m,m}\|^6_{\bbH^1}+1).
\end{align*}
For the term $II_3,$ it can be estimated as
\begin{align*}
II_3\leq C\tau^2_m(\sup_{t\in T_m}\|u^{D,N}_{m}(t)\|^2_{\bbH^2}+\sup_{t\in T_m}\|u^{D,N}_{t,m}\|^2_{\bbH^2}).
\end{align*}
Hence, \begin{align*}
\|{E}_{m+1}\|^2&\leq \|{E}_m\|^2+C\tau_m(1+\|u^{D,N}_{m}(t_m)\|^2_{L^{\infty}(\OO)}+\|u^{N}_{m}\|^2_{L^{\infty}(\OO)})\|E_m\|^2\\
&\quad \;+C\tau^2_m(\sup_{t\in T_m}\|u^{D,N}_{m}(t)\|^6_{\bbH^2}+\sup_{t\in T_m}\|u^{D,N}_{t,m}\|^6_{\bbH^2}+1).
\end{align*}
Applying the Gr\"onwall inequality leads to
\begin{align}\label{error1}
\|{E}_{m+1}\|^2&\leq C\sum_{j=0}^m\tau^2_j(\sup_{t\in T_j}\|u^{D,N}_{j}(t)\|^6_{\bbH^2}+\sup_{t\in T_j}\|u^{D,N}_{t,j}\|^6_{\bbH^2}+1)\times\notag\\
&\quad\; \exp\Big\{C\sum_{j=0}^m\tau_j\Big(1+\|u^{D,N}_{j}(t_j)\|^2_{L^{\infty}(\OO)}+\|u^{N}_{j}\|^2_{L^{\infty}(\OO)}\Big)\Big\}.
\end{align}
Note that
\begin{align*}
\Big\|\exp\Big\{C\sum_{j=0}^{M_{T}-1}\tau_j\|\nabla u^{N}_j\|\Big\}\Big\|_{L^{4p}(\Omega)}&\leq \Big\|\exp\Big\{\sum_{j=0}^{M_{T}-1}\tau_j\Big(\rho\|\nabla u^{N}_j\|^2+C(\rho)\Big)\Big\}\Big\|_{L^{4p}(\Omega)}\\
&\leq \Big\|\frac{1}{T}\sum_{j=0}^{M_{T}-1}\tau_j\exp\Big\{T\Big(\rho\|\nabla u^{N}_j\|^2+C(\rho)\Big)\Big\}\Big\|_{L^{4p}(\Omega)}\\
&\leq \frac{1}{T}\sum_{j=0}^{M_{T}-1}T\delta\Big(\bbE\Big[\exp\Big\{4pT\Big(\rho\|\nabla u^{N}_j\|^2+C(\rho)\Big)\Big\}\Big]\Big)^{\frac1{4p}},
\end{align*}
where in the second inequality we use the convexity of $e^{a},a\in\mathbb{R}$, and in the last inequality we use the assumption $\tau_j\leq T\delta$. 
Taking  $\rho=\frac{1}{16pT\exp\{\alpha_{\lambda}T\}},$ and combining $\|\nabla u\|^2\leq 4\mathcal{H}(u)+\|u\|^6,\,u\in \bbH^1$ and Proposition \ref{expHu^N_m} give
\begin{align*}
\bbE\Big[\exp\Big\{4pT\Big(\rho\|\nabla u^{N}_j\|^2+C(\rho)\Big)\Big\}\Big]\leq \bbE\Big[\exp\Big\{\frac{\HH(u^{N}_{j})}{e^{\alpha_{\lambda}t_{j}}}+C(\rho)\Big\}\Big]\leq C,
\end{align*}
which together with the Gagliardo--Nirenberg inequality $\|u\|^2_{L^{\infty}(\OO)}\leq C\|\nabla u\|\|u\|, \,u\in \bbH^1$ implies 
\begin{align}\label{sumexp}
\Big\|\exp\Big\{C\sum_{j=0}^{m}\tau_j\| u^{N}_j\|^2_{L^{\infty}(\OO)}\Big\}\Big\|_{L^{4p}(\Omega)}\leq C.
\end{align}
Similarly, combining Remark \ref{rmk_exp_regu}, one can show that 
\begin{align}\label{sumexp_2}
\Big\|\exp\Big\{C\sum_{j=0}^{m}\tau_j\| u^{D,N}_{j}(t_j)\|^2_{L^{\infty}(\OO)}\Big\}\Big\|_{L^{4p}(\Omega)}\leq C.
\end{align}

Hence, taking the $p$-th power and expectation on both sides of \eqref{error1} and using the assumption $\tau_m\leq T\delta$ lead to
\begin{align*}
\bbE[\|{E}_{m+1}\|^{2p}]&\leq C(T\delta )^p\Big(\bbE\Big|\sum_{j=0}^m\tau_j(\sup_{t\in T_j}\|u^{D,N}_{j}(t)\|^6_{\bbH^2}+\sup_{t\in T_j}\|u^{D,N}_{t,j}\|^6_{\bbH^2}+1)\Big|^{2p}\Big)^{\frac{1}{2}}\times\\
&\quad \;\Big\|\exp\Big\{C\sum_{j=0}^m\tau_j\|u^{D,N}_{j}(t_j)\|^2_{L^{\infty}(\OO)}\Big\}\Big\|_{L^{4p}(\Omega)}^{\frac{p}{2}}\Big\|\exp\Big\{C\sum_{j=0}^m\tau_j\|u^{N}_{j}\|^2_{L^{\infty}(\OO)}\Big\}\Big\|_{L^{4p}(\Omega)}^{\frac{p}{2}} \\
&\leq C\delta^p.
\end{align*}

\textit{Step $2$.}
We estimate the strong error between  the split equation \eqref{u^D_tau_m(t)} and the semi-discrete scheme, i.e., $\big\|u^S_{m-1}(t_m)-u^{S,N}_{m-1}(t_m)\big\|_{L^p(\Omega;\bbH)}=:\|\hat E_m\|_{L^p(\Omega;\bbH)}$. Applying  the chain rule  yields
\begin{align*}
&\quad \|u^{D}_{m}(t_{m+1})-u^{D,N}_{m}(t_{m+1})\|^2=\|u^D_{m}(t_m)-u^{D,N}_{m}(t_m)\|^2\\
&+2\int_{t_m}^{t_{m+1}}\Big\<u^D_{m}(s)-u^{D,N}_{m}(s), \bfi\lambda \big(|u^D_{m}(s)|^2u^D_{m}(s)-P^N|u^{D,N}_{m}(s)|^2u^{D,N}_{m}(s)\big)\Big\>\rmd s,
\end{align*}
and applying the It\^o formula gives
\begin{align*}
&\quad \|u^S_{m}(t_{m+1})-u^{S,N}_{m}(t_{m+1})\|^2=\|u^S_{m}(t_{m})-u^{S,N}_{m}(t_{m})\|^2\\
&+2\epsilon\int_{t_m}^{t_{m+1}}\Big\<u^S_{m}(s)-u^{S,N}_{m}(s),-\frac{1}{2}F_Qu^{S}_{m}(s)+\frac{1}{2}P^NF_Qu^{S,N}_{m}(s)\Big\>\rmd s\\
&+2\sqrt\epsilon \int_{t_m}^{t_{m+1}}\Big\<u^S_{m}(s)-u^{S,N}_{m}(s),-\bfi (u^S_{m}(s)-P^Nu^{S,N}_{m}(s))\rmd W(s)\Big\>\\
&+\epsilon\int_{t_m}^{t_{m+1}}\sum_{k=1}^{\infty}\|(u^S_{m}(s)-P^Nu^{S,N}_{m}(s))Q^{\frac{1}{2}}e_k\|^2\rmd s.
\end{align*}
Therefore, we derive
\begin{align*}
\|\hat E_{m+1}\|^2&=\|\hat E_m\|^2+2\int_{t_m}^{t_{m+1}}\Big\<u^D_{m}(s)-u^{D,N}_{m}(s), \bfi\lambda \big(|u^D_{m}(s)|^2u^D_{m}(s)-P^N|u^{D,N}_{m}(s)|^2u^{D,N}_{m}(s)\big)\Big\>\rmd s\\
&\quad +2\epsilon\int_{t_m}^{t_{m+1}}\Big\<u^S_{m}(s)-u^{S,N}_{m}(s),-\frac{1}{2}F_Qu^{S}_{m}(s)+\frac{1}{2}P^NF_Qu^{S,N}_{m}(s)\Big\>\rmd s\\
&\quad +2\sqrt \epsilon \int_{t_m}^{t_{m+1}}\Big\<u^S_{m}(s)-u^{S,N}_{m}(s),-\bfi (u^S_{m}(s)- P^Nu^{S,N}_{m}(s))\rmd W(s)\Big\>\\
&\quad +\epsilon\int_{t_m}^{t_{m+1}}\sum_{k=1}^{\infty}\|(u^S_{m}(s)-P^Nu^{S,N}_{m}(s))Q^{\frac{1}{2}}e_k\|^2\rmd s=:\|\hat E_m\|^2+III_1+III_2+III_3+III_4.
\end{align*}
For the term $III_1,$ combining the cubic difference formula  gives
\begin{align*}
|III_1|&\leq C\int_{t_m}^{t_{m+1}}\|u^D_{m}(s)-u^{D,N}_{m}(s)\|\Big[\big(\|u^D_{m}(s)\|^2_{L^{\infty}(\OO)}+\|u^{D,N}_{m}(s)\|^2_{L^{\infty}(\OO)}\big)\|u^D_{m}(s)-u^{D,N}_{m}(s)\|\\
&\quad+\|(\mathrm{Id}-P^N)|u^{D,N}_{m}(s)|^2u^{D,N}_{m}(s)\|\Big]\rmd s\\
&\leq C\int_{t_m}^{t_{m+1}}\|u^D_{m}(s)-u^{D,N}_{m}(s)\|^2\big(\|u^D_{m}(s)\|^2_{L^{\infty}(\OO)}+\|u^{D,N}_{m}(s)\|^2_{L^{\infty}(\OO)}+1\big)\rmd s\\
&\quad +C\lambda_N^{-2}\int_{t_m}^{t_{m+1}}\|u^{D,N}_{m}(s)\|^6_{\bbH^2}\rmd s.
\end{align*}
By the properties $\|u^D_{m}(s)-u^D_{m}(t_m)\|^2\leq C\tau_m\|u^D_{m}(t_m)\|^2_{\bbH^1}$ and $\|u^D_{m}(s)-u^{D}_{m}(t_m)\|^2_{\bbH^1}\leq C\tau_m\|u^D_{m}(t_m)\|^2_{\bbH^2}$ of $u^D_{m}(s)$, and those of $u^{D,N}_{m}(s)$ for $s\in T_m,$ we arrive at
\begin{align*}
|III_1|&\leq C\int_{t_m}^{t_{m+1}}(\|\hat E_m\|^2+\tau_m\|u^D_{m}(t_m)\|^2_{\bbH^1}+\tau_m\|u^{D,N}_{m}(t_m)\|^2_{\bbH^1})(\|u^D_{m}(t_m)\|^2_{L^{\infty}(\OO)}+\|u^{D,N}_{m}(t_m)\|^2_{L^{\infty}(\OO)}\\
&\quad +\tau_m\|u^D_{m}(t_m)\|^2_{\bbH^2}+\tau_m\|u^{D,N}_{m}(t_m)\|^2_{\bbH^2}+1)+C\lambda_N^{-2}\int_{t_m}^{t_{m+1}}\|u^{D,N}_{m}(s)\|^6_{\bbH^2}\rmd s\\
&\leq C\tau_m\|\hat E_m\|^2(\|u^D_{m}(t_m)\|^2_{\bbH^1}+\|u^{D,N}_{m}(t_m)\|^2_{\bbH^1}+1)\\
&\quad +C\int_{t_m}^{t_{m+1}}\tau_m(\|u^{D}_{m}(t_m)\|^4_{\bbH^1}+\|u^{D,N}_{m}(t_m)\|^4_{\bbH^1})+\lambda^{-2}_N\|u^{D,N}_{m}(s)\|^6_{\bbH^2}\rmd s.
\end{align*}
Terms $III_2$ and $III_4$ can be estimated respectively as
\begin{align*}
|III_2|&\leq C\int_{t_m}^{t_{m+1}}\Big(\|u^{S}_{m}(s)-u^{S,N}_{m}(s)\|^2(\|F_Q\|_{L^{\infty}(\OO)}+1)+\lambda^{-2}_N\|F_Q\|^2_{H^2}\|u^{S,N}_{m}(s)\|^2_{\bbH^2}\Big)\rmd s
\end{align*}
and 
\begin{align*}
|III_4|\leq C\int_{t_m}^{t_{m+1}}\Big(\|u^S_{m}(s)-u^{S,N}_{m}(s)\|^2\|Q^{\frac{1}{2}}\|^2_{\mathcal{L}^1_2}+\lambda^{-2}_N\|u^{S,N}_{m}(s)\|^2_{\bbH^2}\|Q^{\frac{1}{2}}\|^2_{\mathcal{L}^2_2}\Big)\rmd s.
\end{align*}
By the H\"older continuity and the triangle inequality, we obtain
\begin{align*}
|III_2|+|III_4|&\leq  C\int_{t_m}^{t_{m+1}}\Big[\|\hat E_m\|^2+
\|u^S_m(s)-u^S_m(t_m)\|^2+\|u^{S,N}_{m}(s)-u^{S,N}_m(t_m)\|^2\Big]\rmd s\\
&\quad +C\int_{t_m}^{t_{m+1}}\lambda^{-2}_N\|u^{S,N}_{m}(s)\|^2_{\bbH^2}\rmd s.
\end{align*}
Combining estimates of terms $III_j,\,j=1,2,4$ yields that
\begin{align*}
\|\hat E_{m+1}\|^2&\leq \|\hat E_m\|^2+C\tau_m\|\hat E_m\|^2(\|u^D_{m}(t_m)\|^2_{\bbH^1}+\|u^{D,N}_{m}(t_m)\|^2_{\bbH^1}+1)\\
&\quad +C\int_{t_m}^{t_{m+1}}\Big[\tau_m(\|u^{D}_{m}(t_m)\|^4_{\bbH^1}+\|u^{D,N}_{m}(t_m)\|^4_{\bbH^1})+
\|u^S_m(s)-u^S_m(t_m)\|^2\\
&\quad +\|u^{S,N}_{m}(s)-u^{S,N}_m(t_m)\|^2+\lambda^{-2}_N(\|u^{D,N}_{m}(s)\|^6_{\bbH^2}+\|u^{S,N}_{m}(s)\|^2_{\bbH^2})\Big]\rmd s+III_3.
\end{align*}
By iteration, we have
\begin{align*}
\|\hat E_{m+1}\|^{2}&\leq \|\hat E_0\|^{2}+C\sum_{j=0}^m\tau_j\|\hat E_j\|^{2}(\|u^D_{j}(t_j)\|^2_{\bbH^1}+\|u^{D,N}_{j}(t_j)\|^2_{\bbH^1}+1) +C\int_0^{t_{m+1}}\Big[(\delta+\lambda^{-2}_N)\times \\
&\quad(\|u^{D,N}_{\underline{s}}(t_{\underline s})\|^6_{\bbH^2}+\|u^{D}_{\underline{s}}(t_{\underline s})\|^4_{\bbH^1}+\|u^{S,N}_{\underline{s}}(s)\|^2_{\bbH^2})+
\|u^S_{\underline s}(s)-u^S_{\underline s}(t_{\underline s})\|^2 +\|u^{S,N}_{\underline s}(s)-u^{S,N}_{\underline s}(t_{\underline s})\|^2\Big]\rmd s\\
&\quad +2\int_0^{t_{m+1}}\Big\<(\mathrm{Id}-P^N)(u^{S}_{\underline{s}}(s)-u^{S,N}_{\underline{s}}(s)),-\bfi (\mathrm{Id}-P^N)u^{S,N}_{\underline{s}}(s)\rmd W(s)\Big\>\\
&=:\|\hat E_0\|^2+C\sum_{j=0}^m\tau_j\|\hat E_j\|^{2}(\|u^D_{j}(t_j)\|^2_{\bbH^1}+\|u^{D,N}_{j}(t_j)\|^2_{\bbH^1}+1)+J_1+J_2,
\end{align*}
which implies
\begin{align*}
\|\hat E_{m+1}\|^2\leq (\lambda^{-2}_N\|u^N_0\|^2_{\bbH^2}+J_1+J_2)\exp\Big\{C\sum_{j=0}^m\tau_j(\|u^D_{j}(t_j)\|^2_{\bbH^1}+\|u^{D,N}_{j}(t_j)\|^2_{\bbH^1}+1)\Big\}.
\end{align*}
Taking $L^p(\Omega)$-norm, and noticing that the H\"older continuity of $u^S_{\underline {\cdot}}, u^{S,N}_{\underline {\cdot}}$ and the Burkholder--Davis--Gundy inequality give 
\begin{align*}
\|J_1+J_2\|_{L^{2p}(\Omega)}\leq&\, C(\delta+\lambda_N^{-2})+C\int_0^T\big(\|u^S_{\underline s}(s)-u^S_{\underline s}(t_{\underline s})\|_{L^{4p}(\Omega;\bbH)}^2+\|u^{S,N}_{\underline s}(s)-u^{S,N}_{\underline s}(t_{\underline s})\|^2_{L^{4p}(\Omega;\bbH)}\big)\rmd s\\
&+C\bbE\Big[\Big(\int_0^T\lambda^{-4}_N(\|u^S_{\underline{s}}(s)\|^2_{\bbH^2}+\|u^{S,N}_{\underline{s}}(s)\|^2_{\bbH^2})\|u^{S,N}_{\underline{s}}(s)\|^2_{\bbH^2}\|Q^{\frac{1}{2}}\|^2_{\mathcal{L}^1_2}\rmd s\Big)^{p}\Big]^{\frac{1}{2p}}\\
\leq &\,C(\delta+\lambda_N^{-2}).
\end{align*}
 Moreover, one can show that 
$$\Big\|\exp\Big\{C\sum_{j=0}^m\tau_j(\|u^D_{j}(t_j)\|^2_{\bbH^1}+\|u^{D,N}_{j}(t_j)\|^2_{\bbH^1}+1)\Big\|_{L^{2p}(\Omega)}\leq C,$$ whose proof is similar to that of \eqref{sumexp}--\eqref{sumexp_2} and is omitted. 
Hence, we arrive at $\bbE[\|\hat E_{m+1}\|^{2p}]\leq C(\lambda^{-2}_N+\delta)^p.$ 

\textit{Step $3$.}
For the strong error between the original stochastic Schr\"odinger equation \eqref{Itosense} and the split equation \eqref{u^D_tau_m(t)}, i.e., $\big\|u(t_m)-u^S_{m-1}(t_m)\big\|_{L^p(\Omega;\bbH)},$ it follows from \cite[Theorem $2.2$]{Cui_2019}  that 
$\big\|u(t_m)-u^S_{m-1}(t_m)\big\|_{L^p(\Omega;\bbH)}\leq C\delta^{\frac12}.$

Combining \textit{Steps} $1$-$3$ finishes the proof.
\end{proof}

\begin{remark}
In practice, instead of verifying whether a timestep function satisfies the low bound in Assumption \ref{assump_step1} \eqref{condition2}, people usually introduce a backstop scheme with a uniform timestep and couple it with \eqref{Itoform} to ensure that a simulation over the interval [0,T] can be completed in a finite number of timesteps; see 
e.g. \cite{adapAC} and references therein for more details. 
\end{remark}

\section{Numerical asymptotics}
In this section, we study the asymptotic behavior of the adaptive time-stepping fully discrete scheme  \eqref{Itoform} for 
 the  stochastic NLS equation \eqref{schrod}  as the noise intensity  $\epsilon$ tends to zero. Note that the dependence on $\epsilon$ of solutions is emphasized in this section, for example,  solutions of \eqref{schrod} and \eqref{Itoform} are denoted by $\{u^{\epsilon}(t):\,t\in[0,T]\}$ and  $\{u^{D,N,\epsilon}_{t,\underline{t}},\,u^{S,N,\epsilon}_{t,\underline{t}}:\,t\in[0,T]\}$, respectively. The tool for this study is the theory of large deviation, which  describes  precisely the weak convergence  towards the Dirac measure on the solution   of the corresponding  skeleton equation as $\epsilon\to0.$  We refer to e.g. \cite{FQ, gautier2005, Gautier_2005} for the study on the LDP of the solution of \eqref{schrod}.

Set $H_0:=Q^{\frac{1}{2}}H.$ 
Then $H_0$ is a Hilbert space with the inner product $\<u,v\>_{H_0}:=\<Q^{-\frac{1}{2}}u,Q^{-\frac{1}{2}}v\>_{H}$ and the induced norm $\|\cdot\|_{H_0}^2=\<\cdot,\cdot\>_{H_0},$  where $Q^{-\frac{1}{2}}$ is the pseudo inverse of $Q^{\frac{1}{2}}.$ Denote $\mathcal{S}_M:=\big\{\nu\in L^2([0,T];H_0)\big|\int_0^T\|\nu(s)\|^2_{H_0}\rmd s\leq M\big\}$ and $\mathcal{P}_M:=\big\{\nu:\Omega\times [0,T]\to H_0\big|\nu \text{ is }\mathcal{F}_t\text{-predictable and } \nu\in \mathcal{S}_M \text{ a.s.}\big\}$ for each $M\in(0,\infty).$ It can be checked that $\mathcal{S}_M$ is a compact Polish space endowed with the weak topology $d_1(g_1,g_2)=\sum_{k\ge 1}\frac{1}{2^k}|\int_0^T\<g_1(s)-g_2(s),\xi_k(s)\>_{H_0}\rmd s|,$ where $\{\xi_k\}_{k\ge 1}$ is an orthogonal basis
of $L^2([0,T];H_0);$ see e.g. \cite[Section $4$]{BD2} and \cite[Section $2$]{FQ}. 
In the sequel, we denote by $\xrightarrow[]{d}$ the convergence in distribution.

In order to establish the LDP for the solution of \eqref{Itoform}, we consider 
the following stochastic controlled equation
\begin{equation}
\label{small_noise}
\begin{cases}
\rmd u^{D,N,\epsilon}_{\nu^{\epsilon},m}(t)=\bfi \Delta u^{D,N,\epsilon}_{\nu^{\epsilon},m}(t)\rmd t+\bfi \lambda S^N(t-t_m)P^N|u^{N,\epsilon}_{\nu^{\epsilon},m}|^2u^{N,\epsilon}_{\nu^{\epsilon},m}\rmd t,
\quad u^{D,N,\epsilon}_{\nu^{\epsilon},m}(t_m)=u^{N,\epsilon}_{\nu^{\epsilon},m},\vspace{1mm}\\
\rmd u^{S,N,\epsilon}_{\nu^{\epsilon},m}(t)=-\frac{\epsilon }{2}P^NF_Qu^{S,N,\epsilon}_{\nu^{\epsilon},m}(t)\rmd t-\bfi P^Nu^{S,N,\epsilon}_{\nu^{\epsilon},m}(t)\nu^{\epsilon}(t)\rmd t\vspace{1mm}\\
 \qquad \quad \qquad \;\;
-\bfi \sqrt{\epsilon}P^Nu^{S,N,\epsilon}_{\nu^{\epsilon},m}\rmd W(t), \quad 
u^{S,N,\epsilon}_{\nu^{\epsilon},m}(t_m)=u^{D,N,\epsilon}_{\nu^{\epsilon},m}(t_{m+1}),
\end{cases}
\end{equation}
and the skeleton equation 
\begin{equation}\label{deterministic}
\begin{cases}
\rmd w^{D,N}_{\nu,m}(t)=\bfi \Delta w^{D,N}_{\nu,m}(t)\rmd t+\bfi \lambda S^N(t-t_m)P^N |w^{N}_{\nu,m}|^2w^{N}_{\nu,m}\rmd t,\quad w^{D,N}_{\nu,m}(t_m)=w^{N}_{\nu,m},\vspace{1mm}\\
\rmd w^{S,N}_{\nu,m}(t)=-\bfi P^Nw^{S,N}_{\nu,m}(t)\nu(t)\rmd t,\quad w^{S,N}_{\nu,m}(t_m)=w^{D,N}_{\nu,m}(t_{m+1})
\end{cases}
\end{equation}
for $t\in T_m$ with $\nu^{\epsilon}, \nu\in L^2([0,T];H_0).$
  Here, the initial data both are $u^N_0.$
Define measurable maps $\mathcal{G}^{\epsilon},\mathcal{G}^0:\mathcal{C}([0,T];H)\to\mathcal{C}([0,T];\bbH_N)$  by $\mathcal{G}^{\epsilon}\big(\sqrt{\epsilon}W+\int_0^{\cdot}\nu^{\epsilon}(s)\rmd s\big):=u^{S,N,\epsilon}_{\nu^{\epsilon},\underline{\cdot}}(\cdot)$ and $\mathcal{G}^0\big(\int_0^{\cdot}\nu(s)\rmd s\big):=w^{S,N}_{\nu,\underline{\cdot}}(\cdot).$ And denote $u^{N,\epsilon}_{\nu^{\epsilon},m+1}:=u^{S,N,\epsilon}_{\nu^{\epsilon},m}(t_{m+1})$ and $w^N_{\nu,m+1}:=w^{S,N}_{\nu,m}(t_{m+1}).$

Similar assumptions to Assumptions \ref{assump_step1} and \ref{assump_step2} are given as follows.
\begin{assumption}\label{refined1}
Let $\tau_m$ satisfy 
\begin{align*}
&\tau_m\leq \min\big\{L_1 \|w^N_{\nu,m}\|^2\|w^N_{\nu,m}\|^{-6}_{L^6(\mathcal{O})},\;L_1\|u^{N,\epsilon}_{\nu^{\epsilon},m}\|^2\|u^{N,\epsilon}_{\nu^{\epsilon},m}\|^{-6}_{L^6(\OO)},\;T\delta \big\}\quad a.s.,\\
&\tau_m\ge \max\big\{(\zeta \|u^{N,\epsilon}_{\nu^{\epsilon},m}\|^{\beta}+\xi)^{-1}\delta,\;(\zeta \|w^N_{\nu,m}\|^{\beta}+\xi)^{-1}\delta\big\}\quad a.s.
\end{align*}
with constants $L_1,\zeta,\beta,\xi>0$ and small constant $\delta\in(0,1)$ independent of $\epsilon$.
\end{assumption} 

\begin{assumption}\label{refined2}
Let $\tau_m$ satisfy 
\begin{align*}
&\tau_m^{\frac{1}{2}-\gamma}\lambda_N\leq L_2\quad a.s.,\\
&\tau_m^{\gamma}\max\{\HH(u^{N,\epsilon}_{\nu^{\epsilon},m}),\HH(w^N_{\nu,m})\}\leq L_3\quad a.s.
\end{align*}
for some $\gamma\in(0,\frac{1}{2})$ and constants $L_2,L_3>0$ independent of $\epsilon.$
\end{assumption}

The main result of this section is stated as follows.
\begin{thm}\label{mainLDP}
Under Assumptions \ref{refined1} and \ref{refined2}, the family $\{u^{S,N,\epsilon}_{\cdot,\underline{\cdot}}\}_{\epsilon\in(0,1)}$  of solutions of \eqref{Itoform} satisfies the LDP on $\mathcal{C}([0,T];\bbH_N)$, i.e., 
\begin{itemize}
\item[(\romannumeral1)] for each closed subset $F$ of $\mathcal{C}([0,T];\bbH_N),$ 
\begin{align*}
\limsup_{\epsilon\to0}\epsilon\log\mathbb{P}(u^{S,N,\epsilon}_{\cdot,\underline{\cdot}}\in F)\leq -\inf_{x\in F}I(x);
\end{align*}
\item[(\romannumeral2)] for each open subset $G$ of $\mathcal{C}([0,T];\bbH_N),$ 
\begin{align*}
\liminf_{\epsilon\to0}\epsilon\log \mathbb{P}(u^{S,N,\epsilon}_{\cdot,\underline{\cdot}}\in G)\ge -\inf_{x\in G}I(x),
\end{align*}
\end{itemize}
where  the rate function $I:\mathcal{C}([0,T];\bbH_N)\to [0,\infty]$ is defined by
\begin{align*}
I(f)=\inf_{\{\nu\in L^2([0,T];H_0):f=\mathcal{G}^0(\int_0^{\cdot}\nu (s)\rmd s)\}}\frac{1}{2}\|\nu(s)\|^2_{H_0}\rmd s.
\end{align*}
\end{thm}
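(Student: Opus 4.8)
The plan is to establish the Laplace principle for $\{u^{S,N,\epsilon}_{\cdot,\underline{\cdot}}\}_{\epsilon\in(0,1)}$ on the Polish space $\mathcal{C}([0,T];\bbH_N)$, which is equivalent to the asserted LDP with the same rate function, by verifying the Budhiraja--Dupuis sufficient condition (see \cite{Weakbook, BD2}): for every $M<\infty$, (a) the map $\nu\mapsto\mathcal{G}^0\big(\int_0^\cdot\nu(s)\rmd s\big)$ is continuous from $(\mathcal{S}_M,d_1)$ into $\mathcal{C}([0,T];\bbH_N)$ and $\{\mathcal{G}^0(\int_0^\cdot\nu(s)\rmd s):\nu\in\mathcal{S}_M\}$ is compact there; and (b) whenever $\nu^\epsilon\in\mathcal{P}_M$ with $\nu^\epsilon\xrightarrow{d}\nu$ as $\mathcal{S}_M$-valued random elements, one has $\mathcal{G}^\epsilon\big(\sqrt\epsilon W+\int_0^\cdot\nu^\epsilon(s)\rmd s\big)\xrightarrow{d}\mathcal{G}^0\big(\int_0^\cdot\nu(s)\rmd s\big)$ in $\mathcal{C}([0,T];\bbH_N)$. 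Well-posedness and measurability of $\mathcal{G}^\epsilon$ and $\mathcal{G}^0$ follow from a fixed-point argument in $\bbH_N$ on each subinterval $T_m$ together with a version of Remark \ref{rmktau_min} adapted to Assumption \ref{refined1}, which guarantees that $M_T$ is a.s.\ finite.

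First I would derive a priori bounds that are \emph{uniform in the control}. For the skeleton equation \eqref{deterministic}, the identity $\<w^{S,N}_{\nu,m}(t),-\bfi P^Nw^{S,N}_{\nu,m}(t)\nu(t)\>=0$ shows the mass is conserved along the $w^S$-flow, so the argument of Lemma \ref{Hnorm}, using $\|S^N(t)\|_{\mathcal{L}(\bbH;\bbH)}=1$ and the upper bound on $\tau_m$ in Assumption \ref{refined1}, gives $\sup_m\|w^N_{\nu,m}\|\leq e^{L_1T}\|u^N_0\|$. For the $\bbH^1$-bound, the deterministic Hamiltonian increment is controlled by $C\tau_m^{1+\gamma}(\HH(w^N_m)+1)$ exactly as in Proposition \ref{H^1regularity} (using Assumption \ref{refined2}), while along the $w^S$-flow $\tfrac{\rmd}{\rmd t}\HH(w^{S,N}_{\nu,m}(t))=D\HH(w^{S,N}_{\nu,m}(t))\big(-\bfi P^Nw^{S,N}_{\nu,m}(t)\nu(t)\big)$ is bounded by $C(\HH(w^{S,N}_{\nu,m}(t))+1)\|\nu(t)\|_{H_0}$ via $\|\nu(t)\|_{L^\infty}\leq C\|\nu(t)\|_{H^2}\leq C\|Q^{-1/2}\|\,\|\nu(t)\|_{H_0}$, the Gagliardo--Nirenberg and inverse inequalities and the mass bound; Grönwall with $\int_0^T\|\nu(s)\|_{H_0}^2\rmd s\leq M$ then yields $\sup_t\HH(w^{S,N}_{\nu,\underline t}(t))\leq C(\HH(u^N_0),M,T,N)$, hence a uniform $\bbH^1$-bound. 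An entirely analogous computation for \eqref{small_noise}, now taking expectations, estimating the stochastic-integral contribution by the conditional-expectation technique of the proof of Proposition \ref{H2regularity} and the a.s.\ bound $\nu^\epsilon\in\mathcal{S}_M$, gives $\sup_{\epsilon\in(0,1)}\bbE\big[\sup_{t\in[0,T]}\|u^{S,N,\epsilon}_{\nu^\epsilon,\underline t}(t)\|_{\bbH^1}^p\big]\leq C(p,M,T,N,u_0)$.

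Next, using these bounds and the explicit one-step forms on each $T_m$, I would establish uniform Hölder-$\tfrac12$ estimates $\|w^{S,N}_{\nu,\underline t}(t)-w^{S,N}_{\nu,\underline s}(s)\|\leq C|t-s|^{1/2}$ and $\bbE\big[\|u^{S,N,\epsilon}_{\nu^\epsilon,\underline t}(t)-u^{S,N,\epsilon}_{\nu^\epsilon,\underline s}(s)\|^p\big]\leq C|t-s|^{p/2}$, handling the Laplacian by inserting $S^N$, the nonlinear and control terms by the $\bbH^1$-bounds together with Cauchy--Schwarz in time against $\|\nu\|_{H_0}$, and the stochastic term by the Burkholder--Davis--Gundy inequality. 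Since $\bbH_N$ is finite-dimensional, the uniform $\bbH^1$-bound plus equicontinuity yields, via Arzelà--Ascoli, relative compactness of $\{w^{S,N}_{\nu,\underline\cdot}:\nu\in\mathcal{S}_M\}$ in $\mathcal{C}([0,T];\bbH_N)$ and tightness of $\{u^{S,N,\epsilon}_{\nu^\epsilon,\underline\cdot}\}_{\epsilon}$. To complete (a), take $\nu_n\to\nu$ in $(\mathcal{S}_M,d_1)$, extract a subsequence with $w^{S,N}_{\nu_n,\underline\cdot}\to w$ in $\mathcal{C}([0,T];\bbH_N)$, and pass to the limit in the integrated formulation of \eqref{deterministic}: all terms converge by the strong convergence of $w^{S,N}_{\nu_n,\underline\cdot}$ except $\int_0^\cdot\bfi P^Nw^{S,N}_{\nu_n,\underline s}(s)\nu_n(s)\rmd s$, which converges to $\int_0^\cdot\bfi P^Nw(s)\nu(s)\rmd s$ because $w^{S,N}_{\nu_n,\underline\cdot}\to w$ strongly (so $P^Nw_n\nu_n-P^Nw\nu=P^N(w_n-w)\nu_n+P^N w(\nu_n-\nu)$, the first term $\to0$ in $L^1([0,T];\bbH_N)$ by the $L^\infty$-bound on $\nu_n$, the second $\to0$ since $\nu_n\rightharpoonup\nu$ weakly in $L^2([0,T];H_0)$ and $s\mapsto P^Nw(s)\,\cdot$ is a fixed $L^2$-in-time multiplier); uniqueness for \eqref{deterministic} identifies $w=\mathcal{G}^0(\int_0^\cdot\nu(s)\rmd s)$, so the whole sequence converges. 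For (b), invoke the Skorokhod representation theorem to realize $\nu^\epsilon\to\nu$ a.s.\ in $\mathcal{S}_M$ on a common probability space; the a priori bounds and BDG force $\sqrt\epsilon W\to0$ and the stochastic term $\to0$ in probability in $\mathcal{C}([0,T];\bbH_N)$, the remaining terms are treated exactly as in the deterministic limit, and uniqueness for \eqref{small_noise}/\eqref{deterministic} identifies the limit, giving the required convergence in distribution. This verifies the Budhiraja--Dupuis condition and hence the theorem.

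I expect the main obstacle to be the second step: obtaining the uniform-in-$\nu$ (and in $\epsilon$) $\bbH^1$ a priori bounds. The drift $P^N|u|^2u$ is only locally Lipschitz, the control $\nu$ is merely $L^2$ in time, and the Hamiltonian is not exactly conserved by the discrete flow, so one must combine the adaptive step-size constraints of Assumptions \ref{refined1}--\ref{refined2} (which are precisely what keep the per-step Hamiltonian increment $O(\tau_m^{1+\gamma})$) with a Grönwall argument driven by $\|\nu(s)\|_{H_0}^2\in L^1([0,T])$, while carefully checking that every constant depends only on $M$, $N$, $T$ and $u_0$ and not on the particular $\nu$ or $\epsilon$. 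Once these bounds and the accompanying equicontinuity are in hand, the compactness in $\mathcal{C}([0,T];\bbH_N)$ and the identification of limits are comparatively routine.
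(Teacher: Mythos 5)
Your overall framework coincides with the paper's: you verify the two Budhiraja--Dupuis conditions (compactness of $K_M$ and convergence in distribution of the controlled solutions), and your uniform-in-control mass and $\bbH^1$ bounds are exactly the paper's Propositions on \eqref{small_noise} and \eqref{deterministic}. The genuine gap is in your second step, where you claim uniform two-sided H\"older-$\tfrac12$ estimates $\|w^{S,N}_{\nu,\underline t}(t)-w^{S,N}_{\nu,\underline s}(s)\|\leq C|t-s|^{1/2}$ and $\bbE\big[\|u^{S,N,\epsilon}_{\nu^\epsilon,\underline t}(t)-u^{S,N,\epsilon}_{\nu^\epsilon,\underline s}(s)\|^p\big]\leq C|t-s|^{p/2}$ and then invoke Arzel\`a--Ascoli/Kolmogorov for compactness of $K_M$ and tightness of the controlled solutions. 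These estimates are false across the grid points: by construction of the splitting scheme, on each $T_m$ the $S$-component starts from $u^{D,N}_{t_{m+1},m}$, so the path $t\mapsto u^{S,N}_{t,\underline t}$ jumps at $t_{m+1}$ by $u^{D,N}_{t_{m+2},m+1}-u^N_{m+1}$, whose size is of order $\tau_{m+1}^{1/2}\|u^N_{m+1}\|_{\bbH^1}$ and is bounded below (for fixed $\delta$, $N$) independently of $|t-s|$; letting $s\uparrow t_{m+1}$, $t=t_{m+1}$ makes the left side stay bounded away from zero while the right side vanishes, and as $\epsilon\to0$ these jumps do not shrink. Consequently equicontinuity of $\{w^{S,N}_{\nu,\underline\cdot}:\nu\in\mathcal{S}_M\}$ fails and a Kolmogorov-type tightness criterion cannot be applied as stated. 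This is precisely why the paper does something different at this point: tightness of $\{u^{S,N,\epsilon}_{\nu^\epsilon,\underline\cdot}\}$ is obtained from the Ethier--Kurtz criterion involving the \emph{product} of the forward and backward increments, together with the case split $\bar t-t\geq\tfrac12\tau_{\min}$ versus $\bar t-t<\tfrac12\tau_{\min}$ and the restriction $\theta<\tfrac12\tau_{\min}$, so that near a grid point only the one-sided increment that avoids the jump needs to be small; and compactness of $K_M$ is proved not via equicontinuity of the image but by a direct Gr\"onwall/induction stability estimate for $w^{S,N}_{\nu^\epsilon}-w^{S,N}_{\nu}$ on the common grid, with the control term $\psi_\epsilon(t)=\int_0^t-\bfi P^Nw^{S,N}_{\nu^\epsilon,\underline s}(s)(\nu^\epsilon-\nu)(s)\rmd s$ shown to vanish by combining weak convergence of the controls with a compactness argument for $\{\psi_\epsilon\}$.

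A secondary imprecision: in part (b) you apply the Skorokhod representation to the controls $\{\nu^\epsilon\}$ alone and then argue pathwise. Since $u^{S,N,\epsilon}_{\nu^\epsilon}$ is a functional of the pair $(W,\nu^\epsilon)$ with $\nu^\epsilon$ adapted, realizing only the controls on a new space does not by itself transfer the controlled solutions; one must either carry along a Wiener process with respect to which the realized controls remain predictable, or (as the paper does) first establish tightness of the solution processes, apply Skorokhod to the pair $(u^{S,N,\epsilon}_{\nu^\epsilon,\underline\cdot},\nu^\epsilon)$, and identify the limit through the bounded continuous functional $\Upsilon_t$ whose expectation is $O(\epsilon)$. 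This is a standard repair, but as written your step (b) also leans on the flawed tightness estimate above, so both the tightness mechanism and the representation step need to be replaced by the paper's constructions.
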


Below we give the a.s.-uniform boundedness of the masses,
and the $\bbH^1$-regularity estimates of solutions of \eqref{small_noise} and \eqref{deterministic}, which are similar  to those of the fully discrete scheme \eqref{Itoform}, i.e., Lemma \ref{Hnorm} and Proposition \ref{H^1regularity}.

\begin{prop}
Let $M>0,$ and let $\{\nu^{\epsilon}\}_{\epsilon\in(0,1)}\subset \mathcal{P}_M$. Under Assumptions \ref{refined1} and \ref{refined2}, 
\begin{align}
\sup_{\epsilon\in(0,1)}\sup_{t\in[0,T]}\big(\|u^{D,N,\epsilon}_{\nu^{\epsilon},\underline{t}}\|^2\vee \|u^{S,N,\epsilon}_{\nu^{\epsilon},\underline{t}}\|^2\big)\leq e^{L_1T}\|u^{N}_{0}\|^2\quad  a.s.,\label{ldp_regu_1}
\end{align}
and for $p\ge 2,$ 
\begin{align}
\sup_{\epsilon\in(0,1)}\mathbb{E}\Big[\sup_{t\in[0,T]}\|u^{S,N,\epsilon}_{\nu^{\epsilon},\underline{t}}(t)\|^p_{\bbH^1}\Big]\leq C\quad a.s.,\label{ldp_regu_2}
\end{align}
where the constant  $L_1$ is given in Assumption \ref{refined1} and $C:=C(p,T,M,\HH(u^N_0))>0$.
\end{prop}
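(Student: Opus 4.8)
\emph{Overview.} The plan is to mirror the proofs of Lemma~\ref{Hnorm} and Proposition~\ref{H^1regularity}, the only genuinely new ingredient being the control drift $-\bfi P^N u^{S,N,\epsilon}_{\nu^\epsilon,m}(t)\nu^\epsilon(t)$ in the linear sub-flow of \eqref{small_noise}. The corresponding bounds for the skeleton equation \eqref{deterministic} would then follow from the same (in fact easier, purely deterministic) computations, since there the linear sub-flow conserves the mass exactly and $\nu$ is deterministic, so I would treat only the stochastic controlled case.

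\emph{Mass bound \eqref{ldp_regu_1}.} For the Hamiltonian sub-flow I would argue as in Lemma~\ref{Hnorm}: $\|S^N(\tau_m)\|_{\mathcal{L}(\bbH;\bbH)}=1$, \eqref{onestep} and Assumption~\ref{refined1} give $\|u^{D,N,\epsilon}_{\nu^\epsilon,t_{m+1}}\|^2=\|u^{N,\epsilon}_{\nu^\epsilon,m}\|^2+\tau_m^2\|u^{N,\epsilon}_{\nu^\epsilon,m}\|^6_{L^6(\OO)}\le(1+L_1\tau_m)\|u^{N,\epsilon}_{\nu^\epsilon,m}\|^2$. For the linear controlled sub-flow, the It\^o formula for $\|u^{S,N,\epsilon}_{\nu^\epsilon,t}\|^2$ simplifies because $u^{S,N,\epsilon}_{\nu^\epsilon,t}\in\bbH_N$ and $\nu^\epsilon(t)$ is real-valued: the control term $\<u^{S,N,\epsilon}_{\nu^\epsilon,t},-\bfi P^N u^{S,N,\epsilon}_{\nu^\epsilon,t}\nu^\epsilon(t)\>$ vanishes and so does the It\^o stochastic integrand, so the computation reduces verbatim to \eqref{Ito_app_1} and yields $\|u^{S,N,\epsilon}_{\nu^\epsilon,t}\|^2\le\|u^{D,N,\epsilon}_{\nu^\epsilon,t_{m+1}}\|^2$. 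Iterating over the a.s.\ finitely many subintervals (the analogue of Remark~\ref{rmktau_min} applies, since $L_1,\zeta,\beta,\xi,\delta$ are $\epsilon$-independent) gives \eqref{ldp_regu_1}, uniformly in $\epsilon$ and $\nu^\epsilon$.

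\emph{$\bbH^1$ estimate \eqref{ldp_regu_2}.} For the Hamiltonian sub-flow, the computation leading to \eqref{iterH_D}, combined with Assumption~\ref{refined2} and $\tau_m^\gamma\HH(u^{N,\epsilon}_{\nu^\epsilon,m})\le L_3$, gives $\HH(u^{D,N,\epsilon}_{\nu^\epsilon,t_{m+1}})\le\HH(u^{N,\epsilon}_{\nu^\epsilon,m})+C\tau_m$. For the linear controlled sub-flow I would apply the It\^o formula to $\HH(u^{S,N,\epsilon}_{\nu^\epsilon,t})$; the $\mu_2$-, trace- and martingale-terms are exactly those of Proposition~\ref{H^1regularity} (whose drift is $\le C(\|\nabla u^{S,N,\epsilon}_{\nu^\epsilon,r}\|^2+1)$ pathwise), and for the new term $D\HH(u^{S,N,\epsilon}_{\nu^\epsilon,r})(-\bfi P^N u^{S,N,\epsilon}_{\nu^\epsilon,r}\nu^\epsilon(r))$ I would use $\nabla P^N=P^N\nabla$, the reality of $\nu^\epsilon$ (which cancels $\<\nabla u,-\bfi\nu^\epsilon\nabla u\>$ and the full-space cubic term), the embedding $\|\nu^\epsilon(r)\|_{H^2}\le C\|Q^{1/2}\|_{\mathcal{L}^2_2}\|\nu^\epsilon(r)\|_{H_0}$, the inequalities $\|v\|_{L^\infty}\le C\|v\|_{\bbH^1}$ and $\|v\|^3_{L^6(\OO)}\le C\|\nabla v\|\|v\|^2$, the mass bound above and $\|\nabla v\|^2\le 4\HH(v)+\|v\|^6$, to obtain a bound $C(\HH(u^{S,N,\epsilon}_{\nu^\epsilon,r})+1)(1+\|\nu^\epsilon(r)\|^2_{H_0})$. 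Assembling both sub-flows over $[0,t]$ and writing $Y_r:=\HH(u^{S,N,\epsilon}_{\nu^\epsilon,\underline r}(r))+C_*$ (with $C_*$ deterministic, making $Y\ge0$, available from $\HH(v)\ge-\frac14\|v\|^6$ and the mass bound), this yields the pathwise semimartingale inequality
\[
Y_t\le Y_0+C(1+T)+\int_0^t Y_r\,\rmd\Lambda(r)+\widetilde M_t\qquad a.s.,
\]
where $\Lambda(r):=C\int_0^r(1+\|\nu^\epsilon(s)\|^2_{H_0})\rmd s$ is adapted and nondecreasing with $\Lambda(T)\le C(T+M)$ a.s.\ (since $\nu^\epsilon\in\mathcal{P}_M$), and $\widetilde M$ is a continuous martingale whose bracket is bounded by $\epsilon<1$ times a fixed polynomial in $\HH(u^{S,N,\epsilon}_{\nu^\epsilon,\cdot})$.

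\emph{Closing the estimate; the main obstacle.} The hard part is the Gr\"onwall step, which is not available in the usual deterministic form because $\nu^\epsilon$ is only a.s.\ $L^2([0,T];H_0)$-bounded and is $\mathcal{F}_t$-adapted, hence correlated with the solution; a deterministic Gr\"onwall inequality for $\bbE[\HH(u^{S,N,\epsilon}_{\nu^\epsilon,\underline t}(t))]$ is unavailable. I would instead exploit the a.s.\ \emph{deterministic} bound $\Lambda(T)\le C(T+M)$: a pathwise (Stieltjes) variation-of-constants argument applied to the displayed inequality gives $\sup_{t\le T}Y_t\le e^{C(T+M)}\big(Y_0+C(1+T)+\sup_{t\le T}|\widehat M_t|\big)$ with $\widehat M_t:=\int_0^t e^{-\Lambda(r)}\rmd\widetilde M_r$ a continuous martingale; taking expectations and absorbing $\bbE[\sup_{t\le T}|\widehat M_t|]$ via the Burkholder--Davis--Gundy inequality together with the bound on $\<\widetilde M\>_T$ then closes the estimate and yields \eqref{ldp_regu_2} for $p=2$, with $C=C(p,T,M,\HH(u^N_0))$ independent of $\epsilon$. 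For $p>2$ one runs the same argument with the functional $(\HH(u^{S,N,\epsilon}_{\nu^\epsilon,t})+C_*)^p$ in place of $\HH$, exactly as for the case $p>2$ in Proposition~\ref{H^1regularity}. This ``stochastic Gr\"onwall through an adapted deterministic clock'' is precisely the conditional-moment estimate alluded to in the Introduction, and is the only place where the proof departs from a verbatim transcription of Section~\ref{section2}.
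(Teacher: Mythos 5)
Your proposal is essentially correct, and the first two parts (the mass bound and the Hamiltonian iteration for the deterministic sub-flow) coincide with the paper's proof; the difference lies in how you close the $\bbH^1$ estimate. The paper does \emph{not} need a stochastic Gr\"onwall argument: after the It\^o step it applies the Young inequality to the control terms, e.g. $\|\nabla u\|\,\|u\|\,\|\nabla\nu^{\epsilon}\|_{L^{\infty}(\OO)}\leq C\|\nabla u\|^2+C\|u\|^2\|\nabla\nu^{\epsilon}\|^2_{L^{\infty}(\OO)}$, and then uses the a.s.\ mass bound \eqref{ldp_regu_1} so that the $\nu^{\epsilon}$-contribution enters only \emph{additively}; since $\int_0^T\|\nu^{\epsilon}(s)\|^2_{H_0}\rmd s\leq M$ holds pathwise for $\nu^{\epsilon}\in\mathcal{P}_M$, this additive term is bounded by a deterministic constant $C(M)$, and a plain deterministic Gr\"onwall inequality for $t\mapsto\bbE[\HH(u^{S,N,\epsilon}_{\nu^{\epsilon},\underline t}(t))]$ closes the estimate (the supremum and $p>2$ cases then follow as in Proposition \ref{H^1regularity} via Burkholder--Davis--Gundy). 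So your assertion that "a deterministic Gr\"onwall inequality for $\bbE[\HH]$ is unavailable" because $\nu^{\epsilon}$ is adapted is not accurate: adaptedness is harmless here precisely because the bound on $\int_0^T\|\nu^{\epsilon}\|^2_{H_0}\rmd s$ is pathwise. Your alternative — keeping the multiplicative bound $C(\HH+1)(1+\|\nu^{\epsilon}\|^2_{H_0})$ and running a pathwise variation-of-constants/stochastic Gr\"onwall through the clock $\Lambda$, then absorbing the martingale supremum by BDG — is also valid (the bracket is indeed at most linear in $\HH$ after the mass bound, so the absorption works, modulo the usual localization to guarantee finiteness a priori), but it is heavier machinery than the paper uses. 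Two small corrections: the cubic control term does \emph{not} cancel exactly, because the projection gives $\<|u|^2u,-\bfi P^N(u\nu^{\epsilon})\>=\<|u|^2u,\bfi(\mathrm{Id}-P^N)(u\nu^{\epsilon})\>\neq 0$; it is only bounded (as in the paper) by $\|u\|^3_{L^6(\OO)}\|u\|\,\|\nu^{\epsilon}\|_{L^{\infty}(\OO)}$, which is still compatible with your claimed bound. Also, the proposition concerns only the stochastic controlled equation \eqref{small_noise}; the skeleton bounds are the separate Proposition \ref{propwregu}, so your remark about deducing them is unnecessary here.
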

\begin{proof}
 For the proof of \eqref{ldp_regu_1}, we note that 
$\<u^{S,N,\epsilon}_{\nu^{\epsilon},m}(s),-\bfi P^Nu^{S,N,\epsilon}_{\nu^{\epsilon},m}(s) \nu^{\epsilon}(s)\>=0.$ A similar proof to that of Lemma \ref{Hnorm} leads to \eqref{ldp_regu_1}.

For the proof of \eqref{ldp_regu_2}, similar to the proof of \eqref{iterH_D}, we have
$$\mathcal{H}(u^{D,N,\epsilon}_{\nu^{\epsilon},m}(t_{m+1}))\leq \mathcal{H}(u^{N,\epsilon}_{\nu^{\epsilon},m})+C\tau^{1+\gamma}_m(\mathcal{H}(u^{N,\epsilon}_{\nu^{\epsilon},m})+1)\leq \mathcal{H}(u^{N,\epsilon}_{\nu^{\epsilon},m})+C\tau_m$$
under Assumptions \ref{refined2}.
Applying the It\^o formula to $\HH(u^{S,N,\epsilon}_{\nu^{\epsilon},\underline{\cdot}}(\cdot))$, and noticing that $$\big\<\nabla u^{S,N,\epsilon}_{\nu^{\epsilon},m}(s),\bfi \nabla \big(u^{S,N,\epsilon}_{\nu^{\epsilon},m}(s)\nu^{\epsilon}(s)\big)\big\>=\big\<\nabla u^{S,N,\epsilon}_{\nu^{\epsilon},m}(s),\bfi u^{S,N,\epsilon}_{\nu^{\epsilon},m}(s)\nabla \nu^{\epsilon}(s)\big\>,$$ we derive
\begin{align}\label{nuregu}
&\quad \mathbb{E}[\mathcal{H}(u^{S,N,\epsilon}_{\nu^{\epsilon},m}(t))]-\mathbb{E}[\mathcal{H}(u^{D,N,\epsilon}_{\nu^{\epsilon},m}(t_{m+1}))]\notag\\
&\leq C\mathbb{E}\int_{t_m}^t\|\nabla u^{S,N,\epsilon}_{\nu^{\epsilon},m}(s)\|\Big[\|\nabla u^{S,N,\epsilon}_{\nu^{\epsilon},m}(s)\|\|F_Q\|_{L^{\infty}(\mathcal{O})}+\|u^{S,N,\epsilon}_{\nu^{\epsilon},m}(s)\|_{L^{\infty}(\mathcal{O})}\|\nabla F_Q\|+\|u^{S,N,\epsilon}_{\nu^{\epsilon},m}(s)\|\|\nabla \nu^{\epsilon}(s)\|_{L^{\infty}(\mathcal{O})}\Big]\rmd s\notag\\
&\quad +C\mathbb{E}\int_{t_m}^t\|u^{S,N,\epsilon}_{\nu^{\epsilon},m}(s)\|^3_{L^6(\mathcal{O})}\|u^{S,N,\epsilon}_{\nu^{\epsilon},m}(s)\|(\|F_Q\|_{L^{\infty}(\mathcal{O})}+\|\nu^{\epsilon}(s)\|_{L^{\infty}(\mathcal{O})})\rmd s +C\mathbb{E}\int_{t_m}^t\|u^{S,N,\epsilon}_{\nu^{\epsilon},m}(s)\|^2_{\bbH^1}\rmd s\notag\\
&\leq C\mathbb{E}\int_{t_m}^t(\| \nabla u^{S,N,\epsilon}_{\nu^{\epsilon},m}(s)\|^{2}+1)\rmd s+C\mathbb{E}\int_{t_m}^t\frac{1}{M}(\|\nabla \nu^{\epsilon}(s)\|_{L^{\infty}(\mathcal{O})}^2+\|\nu^{\epsilon}(s)\|^2_{L^{\infty}(\mathcal{O})})\rmd s,
\end{align}
where in the second inequality we use the Young inequality.
 By iteration and combining $\mathcal{H}(u^{S,N,\epsilon}_{\nu^{\epsilon},\underline t}(t))\ge \frac14(\|\nabla u^{S,N,\epsilon}_{\nu^{\epsilon},\underline t}(t)\|^2-\|u^{S,N,\epsilon}_{\nu^{\epsilon},\underline t}(t)\|^6)$, we obtain
\begin{align*}
\mathbb{E}[\mathcal{H}(u^{S,N,\epsilon}_{\nu^{\epsilon},\underline{t}}(t))]&\leq \mathbb{E}[\mathcal{H}(u^N_0)]+C\mathbb{E}\int_0^t\mathcal{H}(u^{S,N,\epsilon}_{\nu^{\epsilon},\underline{s}}(s))\rmd s+CT\\
&\quad +C\mathbb{E}\int_0^t\frac{1}{M}(\|\nabla \nu^{\epsilon}(s)\|_{L^{\infty}(\mathcal{O})}^2+\|\nu^{\epsilon}(s)\|^2_{L^{\infty}(\mathcal{O})})\rmd s.
\end{align*}
It follows from $\|v\|_{H^2}\leq \|Q^{\frac{1}{2}}\|_{\mathcal{L}(H,H^2)}\|Q^{-\frac{1}{2}}v\|\leq \|Q^{\frac{1}{2}}\|_{\mathcal{L}^2_2}\|v\|_{H_0}$ for $v\in H_0$  that 
$$\int_{0}^T(\|\nabla \nu^{\epsilon}(s)\|^2_{L^{\infty}(\mathcal{O})}+\|\nu^{\epsilon}(s)\|^2_{L^{\infty}(\mathcal{O})})\rmd s\leq C\int_0^T\|\nu^{\epsilon}(s)\|^2_{H^2}\rmd s\leq C\int_0^T\|\nu^{\epsilon}(s)\|^2_{H_0}\rmd s\leq CM\quad a.s.$$  This leads to
\begin{align*}
\mathbb{E}[\mathcal{H}(u^{S,N,\epsilon}_{\nu^{\epsilon},\underline{t}}(t))]
&\leq \mathbb{E}[\mathcal{H}(u^N_0)]+C\Big(\mathbb{E}\int_0^t\mathcal{H}(u^{S,N,\epsilon}_{\nu^{\epsilon},\underline{s}}(s))\rmd s+1\Big).
\end{align*} 
Hence, 
\begin{align*}
\sup_{t\in[0,T]}\mathbb{E}[\mathcal{H}(u^{S,N,\epsilon}_{\nu^{\epsilon},\underline{t}}(t))]\leq \Big(\mathbb{E}[\mathcal{H}(u^N_0)]+C\Big)e^{CT}.
\end{align*}
The remaining proof is similar to that of Proposition \ref{H^1regularity} and hence is omitted. 
\end{proof}

\begin{prop}\label{propwregu}
Let $M>0,$ and let $\nu\in\mathcal{S}_M$. Under Assumptions \ref{refined1} and \ref{refined2}, $$\sup_{t\in[0,T]}\|w^N_{\nu,\underline{t}}\|^2\leq e^{L_1T}\|w^N_{0}\|^2,\quad \sup_{t\in[0,T]}\|w^{S,N}_{\nu,\underline{t}}(t)\|^2_{\mathbb{H}^1}\leq C\quad a.s.,$$
where the constant $L_1$ is given in Assumption \ref{refined1} and $C:=C(T,M,\HH(u^N_0))>0$.
\end{prop}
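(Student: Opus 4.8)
The proof runs exactly parallel to those of Lemma \ref{Hnorm} and Proposition \ref{H^1regularity}, with the stochastic step of the fully discrete scheme replaced by the deterministic control step of \eqref{deterministic} and the It\^o formula replaced by the chain rule. The only new feature is the control term $-\bfi P^N(w^{S,N}_{\nu,m}(\cdot)\nu(\cdot))$, which, since $\nu(t)\in H_0\subset H$ is real-valued, leaves the masses invariant and contributes to the Hamiltonian balance only terms that are bounded, linearly in $\|\nabla w^{S,N}_{\nu,\cdot}\|^2$, by a coefficient integrable in time uniformly over $\nu\in\mathcal{S}_M$.

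For the mass bound I would first observe that the Hamiltonian subsystem for $w^{D,N}_{\nu,m}$ is governed by the one-step formula \eqref{onestep}, so under Assumption \ref{refined1}, $\|w^{D,N}_{\nu,m}(t_{m+1})\|^2=\|w^N_{\nu,m}\|^2+\tau_m^2\|w^N_{\nu,m}\|^6_{L^6(\OO)}\leq(1+L_1\tau_m)\|w^N_{\nu,m}\|^2$. For the control subsystem, since $w^{S,N}_{\nu,m}(t)\in\bbH_N$ and $\nu(t)$ is real-valued, $\frac{\rmd}{\rmd t}\|w^{S,N}_{\nu,m}(t)\|^2=2\<w^{S,N}_{\nu,m}(t),-\bfi P^N(w^{S,N}_{\nu,m}(t)\nu(t))\>=2\,\mathrm{Re}\big[\bfi\int_{\OO}|w^{S,N}_{\nu,m}(t)|^2\nu(t)\rmd x\big]=0$, so the mass is preserved along each control step. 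Iterating over $m$ exactly as in Lemma \ref{Hnorm} yields $\sup_{t\in[0,T]}(\|w^{D,N}_{\nu,\underline t}\|^2\vee\|w^{S,N}_{\nu,\underline t}\|^2)\leq e^{L_1T}\|w^N_0\|^2$; in particular, as in Remark \ref{rmktau_min}, Assumption \ref{refined1} then forces $T$ to be reached in finitely many steps.

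For the $\bbH^1$ bound I would follow the proof of Proposition \ref{H^1regularity}. The deterministic NLS step is unchanged: the computation leading to \eqref{iterH_D}, which only uses the condition $\tau_m^{1/2-\gamma}\lambda_N\leq L_2$ and the mass bound just established, gives $\HH(w^{D,N}_{\nu,m}(t_{m+1}))\leq\HH(w^N_{\nu,m})+C\tau_m^{1+\gamma}(\HH(w^N_{\nu,m})+1)$, which by $\tau_m^{\gamma}\HH(w^N_{\nu,m})\leq L_3$ is at most $\HH(w^N_{\nu,m})+C\tau_m$. For the control step, the chain rule gives $\HH(w^{S,N}_{\nu,m}(t))-\HH(w^{D,N}_{\nu,m}(t_{m+1}))=\int_{t_m}^tD\HH(w^{S,N}_{\nu,m}(s))(-\bfi P^N(w^{S,N}_{\nu,m}(s)\nu(s)))\rmd s$; expanding $D\HH(u)(v)=\<\nabla u,\nabla v\>-\lambda\<|u|^2u,v\>$, using integration by parts, the self-adjointness of $P^N$, the identity $\nabla(\overline{w^{S,N}}\nu)=(\nabla\overline{w^{S,N}})\nu+\overline{w^{S,N}}\nabla\nu$ and the real-valuedness of $\nu$ (which annihilates the $|\nabla w^{S,N}|^2\nu$ and $|w^{S,N}|^4\nu$ contributions), and then the Cauchy--Schwarz, Sobolev ($\|\cdot\|_{L^{\infty}(\OO)}\leq C\|\cdot\|_{\bbH^1}$), Gagliardo--Nirenberg ($\|\cdot\|^3_{L^6(\OO)}\leq C\|\nabla\cdot\|\|\cdot\|^2$) and Young inequalities together with the mass bound, I would dominate this by $C\int_{t_m}^t(\|\nabla w^{S,N}_{\nu,m}(s)\|^2+1)(1+\|\nabla\nu(s)\|^2_{L^{\infty}(\OO)}+\|\nu(s)\|^2_{L^{\infty}(\OO)})\rmd s$, in the spirit of \eqref{nuregu}. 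Combining with the deterministic step, iterating over $m$, using $\HH(u)\geq\frac14(\|\nabla u\|^2-\|u\|^6)$ and the mass bound to replace $\|\nabla w^{S,N}_{\nu,\underline s}\|^2$ by $C(\HH(w^{S,N}_{\nu,\underline s}(s))+1)$, and invoking the Gr\"onwall inequality together with $\int_0^T(\|\nabla\nu(s)\|^2_{L^{\infty}(\OO)}+\|\nu(s)\|^2_{L^{\infty}(\OO)})\rmd s\leq C\int_0^T\|\nu(s)\|^2_{H_0}\rmd s\leq CM$ (via $\|v\|_{H^2}\leq\|Q^{1/2}\|_{\mathcal{L}^2_2}\|v\|_{H_0}$, as in the preceding proposition), I obtain $\sup_{t\in[0,T]}\HH(w^{S,N}_{\nu,\underline t}(t))\leq(\HH(w^N_0)+C)e^{C(T+M)}$. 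Since $w^N_0=u^N_0$, a final use of $\HH(u)\geq\frac14(\|\nabla u\|^2-\|u\|^6)$ and the mass bound gives $\sup_{t\in[0,T]}\|w^{S,N}_{\nu,\underline t}(t)\|^2_{\bbH^1}\leq C(T,M,\HH(u^N_0))$.

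The argument presents no genuine obstacle; the only points requiring care are (i) the real-valuedness cancellations that keep the coefficient in front of $\|\nabla w^{S,N}_{\nu,m}\|^2$ under control, so that the NLS-step contribution is $O(\tau_m)$ and the control-step contribution is $O(\|\nabla\nu\|^2_{L^{\infty}(\OO)}+\|\nu\|^2_{L^{\infty}(\OO)})$ times a linear factor, and (ii) verifying that the time-dependent coefficient in the Gr\"onwall inequality belongs to $L^1([0,T])$ uniformly over $\nu\in\mathcal{S}_M$, which follows from the continuous embeddings $H_0\hookrightarrow H^2\hookrightarrow W^{1,\infty}(\OO)$ and the constraint $\int_0^T\|\nu\|^2_{H_0}\rmd s\leq M$.
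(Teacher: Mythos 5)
Your proposal is correct and follows essentially the same route as the paper: mass invariance along the control step (from the real-valuedness of $\nu$, together with the self-adjointness of $P^N$ and $w^{S,N}_{\nu,m}\in\bbH_N$) combined with the one-step bound for the NLS flow, and then the \eqref{iterH_D}-type Hamiltonian estimate for the deterministic step plus a control-step bound closed by Gr\"onwall using $\int_0^T\|\nu(s)\|^2_{H^2}\rmd s\leq C\int_0^T\|\nu(s)\|^2_{H_0}\rmd s\leq CM$. The only cosmetic imprecision is the claim that the $|w^{S,N}|^4\nu$ contribution vanishes exactly (the interposed $P^N$ spoils that identity), but this is immaterial since your Gagliardo--Nirenberg/Young bound controls that term anyway, exactly as in the paper.
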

\begin{proof}
 It is clear that
\begin{align*}
\|w^{S,N}_{\nu,m}(t_{m+1})\|^2=\|w^{S,N}_{\nu,m}(t_m)\|^2=\|w^{D,N}_{\nu,m}(t_{m+1})\|^2,
\end{align*} 
which combining $\|w^{D,N}_{\nu,m}(t_{m+1})\|^2\leq (1+L_1\tau_m)\|w^{N}_{\nu,m}\|^2$ implies that
\begin{align*}
\|w^N_{\nu,m}\|^2\leq e^{L_1T}\|w^N_{\nu,0}\|^2.
\end{align*}

 Similar to the proof of \eqref{iterH_D}, we have
\begin{align*}
\mathcal{H}(w^{D,N}_{\nu,m}(t_{m+1}))\leq \mathcal{H}(w^N_{\nu,m})+C\tau^{\gamma+1}_m(\mathcal{H}(w^{N}_{\nu,m})+1)\leq \mathcal{H}(w^N_{\nu,m})+C\tau_m
\end{align*}
under Assumptions \ref{refined2}. Applying the chain rule and the Young inequality gives
 \begin{align*}
\mathcal{H}(w^{S,N}_{\nu,m}(t))-\mathcal{H}(w^{S,N}_{\nu,m}(t_m))
&=\int_{t_m}^{t}\Big\langle \nabla w^{S,N}_{\nu,m}(s),\nabla \big(-\bfi P^Nw^{S,N}_{\nu,m}(s)\nu(s)\big)\Big\rangle\rmd s\\
&\quad -\lambda \int_{t_m}^{t}\Big\langle |w^{S,N}_{\nu,m}(s)|^2w^{S,N}_{\nu,m}(s),-\bfi P^N w^{S,N}_{\nu,m}(s)\nu(s)\Big\rangle\rmd s\\
&\leq C\int_{t_m}^{t}\|\nabla w^{S,N}_{\nu,m}(s)\|\|\nu(s)\|_{H^2}\rmd s.
\end{align*}
Hence, 
\begin{align*}
\mathcal{H}(w^{S,N}_{\nu,m}(t))&\leq \mathcal{H}(w^{D,N}_{\nu,m}(t_{m+1}))+C\int_{t_m}^t(\|\nabla w^{S,N}_{\nu,m}(s)\|^2+\|\nu(s)\|^2_{H^2})\rmd s\\
&\leq \mathcal{H}(w^N_{\nu,m})+C\int_{t_m}^t(\mathcal{H}(w^{S,N}_{\nu,m}(s))+\|\nu(s)\|^2_{H^2}+1)\rmd s,
\end{align*}
which together with  the iteration and the fact that $\int_0^T\|\nu^{\epsilon}(s)\|^2_{H^2}\rmd s\leq C\int_0^T\|\nu^{\epsilon}(s)\|^2_{H_0}\rmd s\leq CM$ yields
\begin{align*}
\mathcal{H}(w^{S,N}_{\nu,m}(t))\leq \mathcal{H}(w^N_{\nu,0})+C\int_0^t\mathcal{H}(w^{S,N}_{\nu,m}(s))\rmd s+CT+CM.
\end{align*}
Applying the Gr\"onwall  inequality finishes the proof.
\end{proof}

\begin{prop}\label{laplace1}
Let $M>0.$ Under Assumptions \ref{refined1} and \ref{refined2}, the set $K_M:=\big\{\mathcal{G}^0(\int_0^{\cdot}\nu(s)\rmd s):\nu\in\mathcal{S}_M\big\}$  is a compact subset in $\mathcal{C}([0,T];\bbH_N)$.
\end{prop}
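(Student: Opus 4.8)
The plan is to establish that $K_M$ is sequentially compact in $\mathcal{C}([0,T];\bbH_N)$, which suffices since that space is metrizable. I would start with an arbitrary sequence $f_n=\mathcal{G}^0\!\big(\int_0^{\cdot}\nu_n(s)\rmd s\big)=w^{S,N}_{\nu_n,\underline{\cdot}}(\cdot)$ in $K_M$, $\nu_n\in\mathcal{S}_M$. Since $\mathcal{S}_M$ is a compact Polish space for the weak topology $d_1$, after extracting a subsequence (not relabelled) one has $\nu_n\rightharpoonup\nu$ weakly in $L^2([0,T];H_0)$ with $\nu\in\mathcal{S}_M$; the goal is then to show $f_n\to w^{S,N}_{\nu,\underline{\cdot}}(\cdot)=\mathcal{G}^0\!\big(\int_0^{\cdot}\nu(s)\rmd s\big)\in K_M$ uniformly on $[0,T]$.

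The second step is precompactness of $(f_n)$ via Arzel\`a--Ascoli. Proposition \ref{propwregu}, applied with each $\nu_n\in\mathcal{S}_M$, yields the mass bound $\|w^N_{\nu_n,m}\|^2\le e^{L_1T}\|u^N_0\|^2$ and the $\bbH^1$-bound $\sup_{t\in[0,T]}\|w^{S,N}_{\nu_n,\underline{t}}(t)\|^2_{\bbH^1}\le C(T,M,\HH(u^N_0))$, both uniform in $n$; combined with the lower bound on $\tau_m$ in Assumption \ref{refined1} this bounds the number of sub-intervals by $\overline M:=T\tau_{min}^{-1}\delta^{-1}$, again uniformly in $n$ (exactly as in Remark \ref{rmktau_min}). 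For equicontinuity in time I would use that on each sub-interval the deterministic-NLS sub-flow is H\"older-$\frac12$ in $\bbH$ with constant controlled by the $\bbH^1$-norm (cf.\ \eqref{holder}), while the transport sub-flow $\rmd w^{S,N}_{\nu_n,m}=-\bfi P^Nw^{S,N}_{\nu_n,m}\nu_n\rmd t$ has $\bbH$-increments bounded, via $\|P^N(w\nu)\|\le C\|w\|_{\bbH^1}\|\nu\|_{H^2}\le C\|w\|_{\bbH^1}\|\nu\|_{H_0}$ and Cauchy--Schwarz, by $C\sup_r\|w^{S,N}_{\nu_n,m}(r)\|_{\bbH^1}\int_s^t\|\nu_n(r)\|_{H_0}\rmd r\le CM^{1/2}|t-s|^{1/2}$. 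As there are at most $\overline M$ sub-intervals, this produces a modulus of continuity $\omega(h)\le C(T,M,u_0)\,h^{1/2}$ independent of $n$; since $\bbH_N$ is finite-dimensional, Arzel\`a--Ascoli gives a further subsequence along which $f_n\to g$ in $\mathcal{C}([0,T];\bbH_N)$.

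The third and, I expect, hardest step is to identify $g=w^{S,N}_{\nu,\underline{\cdot}}(\cdot)$, the difficulty being that the grid $\{t^{(n)}_m\}$ itself depends on $\nu_n$ through the adaptive rule $\tau_m=\tau(w^N_{\nu_n,m})$. Since the at most $\overline M+1$ grid points lie in the compact box $[0,T]^{\overline M+1}$, I would extract a further subsequence with $t^{(n)}_m\to t^{\ast}_m$ for every $m$ and proceed inductively over sub-intervals. On $T_0$, $\tau_0=\tau(P^Nu_0)$ and the deterministic sub-flow are the same for all $n$, so $w^{D,N}_{\nu_n,0}(t_1)=v_0$ is fixed, and on $[0,t_1]$ the transport sub-flow solves $\dot y_n=-\bfi P^Ny_n\nu_n$ from $v_0$; writing $\int_0^t-\bfi P^Ny_n\nu_n=\int_0^t-\bfi P^N(y_n-y)\nu_n+\int_0^t-\bfi P^Ny(\nu_n-\nu)$ and applying Gr\"onwall to the first integral (bounded by $C\int_0^t\|y_n-y\|\,\|\nu_n\|_{H_0}\rmd s$), while noting the second tends to $0$ in $\bbH_N$ uniformly in $t$ — for each $t$ the map $\nu\mapsto\int_0^t-\bfi P^Ny(s)\nu(s)\rmd s$ is a bounded linear map $L^2([0,T];H_0)\to\bbH_N$, so it turns the weak convergence $\nu_n\rightharpoonup\nu$ into norm convergence, with uniformity in $t$ coming from the equicontinuity obtained above — one gets $y_n\to y=w^{S,N}_{\nu,0}$ uniformly on $[0,t_1]$. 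Hence $w^N_{\nu_n,1}=y_n(t_1)\to w^{S,N}_{\nu,0}(t_1)=w^N_{\nu,1}$ and, by continuity of the timestep function $\tau$, $\tau^{(n)}_1\to\tau_1:=\tau(w^N_{\nu,1})$, so $t^{\ast}_2=t_1+\tau_1$; iterating this weak--strong passage through the (at most $\overline M$) splitting sub-steps shows that $\{t^{\ast}_m\}$ is exactly the grid generated by $\nu$ and that $g=w^{S,N}_{\nu,\underline{\cdot}}(\cdot)=\mathcal{G}^0\!\big(\int_0^{\cdot}\nu(s)\rmd s\big)\in K_M$. Since every subsequence of the original $(f_n)$ has a sub-subsequence converging to this same $g$, the whole (sub)sequence converges to it, proving sequential — hence topological — compactness of $K_M$.

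In short, the main obstacle is propagating a weak-to-strong product limit (weakly convergent controls $\nu_n$ against the strongly convergent transported states) through each of the finitely many splitting steps while simultaneously forcing the $\nu$-dependent adaptive grid to converge to the one $\mathcal{G}^0$ attaches to the limit control; this is where the uniform bound $\overline M$ on the number of sub-intervals from Assumption \ref{refined1}, the $\bbH^1$-regularity of Proposition \ref{propwregu}, continuity of $\tau$, and uniqueness for the finite-dimensional skeleton sub-equations all enter.
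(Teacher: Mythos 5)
Your overall strategy --- sequential compactness via the compact Polish space $(\mathcal{S}_M,d_1)$, the uniform bounds of Proposition \ref{propwregu}, a weak-to-strong passage for the control-against-state integrals using equicontinuity plus finite-dimensionality of $\bbH_N$, and an induction over the finitely many subintervals --- is the same skeleton as the paper's argument. The paper, however, does not first extract an abstract limit by Arzel\`a--Ascoli applied to the solutions: it estimates $\|w^{S,N}_{\nu^{\epsilon},\underline t}(t)-w^{S,N}_{\nu,\underline t}(t)\|^2$ directly, reduces everything by a discrete Gr\"onwall/iteration to showing that $\psi_{\epsilon}(t)=\int_0^t-\bfi P^Nw^{S,N}_{\nu^{\epsilon},\underline s}(s)(\nu^{\epsilon}-\nu)(s)\rmd s$ tends to $0$ in $\mathcal{C}([0,T];\bbH_N)$, and proves the latter by combining the weak convergence $\nu^{\epsilon}\to\nu$ with compactness of the family $\{\psi_{\epsilon}\}$ (Ascoli applied to these integrals, not to the solutions). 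Your ``extract a limit, then identify it'' variant is workable in principle, but it carries the extra burden you yourself flag in your third step, and that is where the real problem lies.

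The genuine gap is your treatment of the grid. In Section 4 the paper does not evaluate an explicit timestep map separately along each control: Assumptions \ref{refined1}--\ref{refined2} constrain one and the same sequence $\tau_m$ simultaneously by the controlled solution and the skeleton solution, and in the proof of this proposition $w^{S,N}_{\nu^{\epsilon},\cdot}$ and $w^{S,N}_{\nu,\cdot}$ are run on the same partition, so no convergence of grid points is needed at all. Your version, in which the grid is generated by $\tau(w^N_{\nu_n,m})$, requires continuity of the timestep function $\tau$ --- an assumption made nowhere in the paper (Assumptions \ref{assump_step1} and \ref{refined1} are only inequality constraints) --- and, even granting it, your induction is incomplete: the claim that the deterministic sub-flow ``is the same for all $n$'' holds only on the first subinterval, since from $m\ge 1$ on the initial data $w^N_{\nu_n,m}$ differ, so you additionally need the local Lipschitz stability of the one-step map $u\mapsto S^N(\tau)\big(u+\bfi\lambda P^N|u|^2u\,\tau\big)$ in both $u$ and $\tau$; the paper supplies exactly this (in $u$, on the common grid) in the form $\|w^{D,N}_{\nu^{\epsilon},m}(t_{m+1})-w^{D,N}_{\nu,m}(t_{m+1})\|^2\le (1+C\tau_m)\|w^N_{\nu^{\epsilon},m}-w^N_{\nu,m}\|^2$, using the $\bbH^1$ bounds. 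If you adopt the paper's reading (a common partition satisfying the refined assumptions for both solutions), your Step 3 collapses to this stability estimate plus your weak-to-strong limit and your proof becomes essentially the paper's; if you insist on control-dependent grids, you must add the continuity of $\tau$ (and stability under perturbation of the step sizes, including the enforced final step $\tau_{M_T-1}=T-t_{M_T-1}$) as hypotheses, which goes beyond what the proposition assumes.
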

\begin{proof}
It suffices to prove that $K_M$ is sequentially compact in $\mathcal{C}([0,T];\bbH_N).$ Let $\{\nu^{\epsilon},\nu\}\subset \mathcal{S}_M$ with $\nu^{\epsilon}\to\nu$ in $\mathcal{S}_M$. The property $\|S^N(t)\|_{\mathcal{L}(\bbH;\bbH)}=1$ and Proposition \ref{propwregu} imply
\begin{align*}
&\quad \;\|w^{D,N}_{\nu^{\epsilon},m}(t_{m+1})-w^{D,N}_{\nu,m}(t_{m+1})\|^2\\
&=\|w^N_{\nu^{\epsilon},m}-w^N_{\nu,m}\|^2+2\tau_m\big\<w^N_{\nu^{\epsilon},m}-w^N_{\nu,m},\bfi \lambda \big(|w^N_{\nu^{\epsilon},m}|^2w^N_{\nu^{\epsilon},m}-|w^N_{\nu,m}|^2w^N_{\nu,m}\big)\big\>\\
&\quad\; +\tau_m^2\big\||w^N_{\nu^{\epsilon},m}|^2w^N_{\nu^{\epsilon},m}-|w^N_{\nu,m}|^2w^N_{\nu,m}\big\|^2\\
&\leq \|w^N_{\nu^{\epsilon},m}-w^N_{\nu,m}\|^2+C\tau_m\|w^N_{\nu^{\epsilon},m}-w^N_{\nu,m}\|^2\Big[\|w^N_{\nu^{\epsilon},m}\|^2_{\bbH^1}+\|w^N_{\nu,m}\|^2_{\bbH^1}+\tau_m(\|w^N_{\nu^{\epsilon},m}\|^4_{\bbH^1}+w^N_{\nu,m}\|^4_{\bbH^1})\Big]\\
&\leq \|w^N_{\nu^{\epsilon},m}-w^N_{\nu,m}\|^2+C\tau_m\|w^N_{\nu^{\epsilon},m}-w^N_{\nu,m}\|^2\quad a.s.
\end{align*}
Note that for $t\in T_m,$
\begin{align*}
\frac{\rmd}{\rmd t}(w^{S,N}_{\nu^{\epsilon},m}(t)-w^{S,N}_{\nu,m}(t))=-\bfi P^Nw^{S,N}_{\nu^{\epsilon},m}(t)\nu^{\epsilon}(t)+\bfi P^Nw^{S,N}_{\nu,m}(t)\nu(t).
\end{align*}
 By the chain rule, we have for $t\in T_m,$
\begin{align*}
\|w^{S,N}_{\nu^{\epsilon},m}(t)-w^{S,N}_{\nu,m}(t)\|^2&=\|w^{D,N}_{\nu^{\epsilon},m}(t_{m+1})-w^{D,N}_{\nu,m}(t_{m+1})\|^2\\
&\quad\;+2\int_{t_m}^t\Big\langle w^{S,N}_{\nu^{\epsilon},m}(s)-w^{S,N}_{\nu,m}(s),-\bfi P^N(w^{S,N}_{\nu^{\epsilon},m}(s)\nu^{\epsilon}(s)-w^{S,N}_{\nu,m}(s)\nu(s))\Big\rangle\rmd s\\
&\leq \|w^{N}_{\nu^{\epsilon},m}-w^N_{\nu,m}\|^2+C\tau_m\|w^{N}_{\nu^{\epsilon},m}-w^N_{\nu,m}\|^2\\
&\quad\;+2\int_{t_m}^t\Big\langle w^{S,N}_{\nu^{\epsilon},m}(s)-w^{S,N}_{\nu,m}(s),-\bfi P^Nw^{S,N}_{\nu^{\epsilon},m}(s)(\nu^{\epsilon}(s)-\nu(s))\Big\rangle\rmd s,
\end{align*}
which together with the iteration yields that for $t\in T_m,$ 
\begin{align*}
&\quad \;\|w^{S,N}_{\nu^{\epsilon},m}(t)-w^{S,N}_{\nu,m}(t)\|^2\\
&\leq C\sup_{t\in[0,t_m]}\|w^N_{\nu^{\epsilon},\underline{t}}-w^N_{\nu,\underline{t}}\|^2+2\int_{0}^t\Big\langle w^{S,N}_{\nu^{\epsilon},\underline{s}}(s)-w^{S,N}_{\nu,\underline{s}}(s),-\bfi P^Nw^{S,N}_{\nu^{\epsilon},\underline{s}}(s)(\nu^{\epsilon}(s)-\nu(s))\Big\rangle\rmd s.
\end{align*}

Denote $\psi_{\epsilon}(t):=\int_0^t-\bfi P^Nw^{S,N}_{\nu^{\epsilon},\underline{s}}(s)(\nu^{\epsilon}-\nu)(s)\rmd s$.
Applying the integration by parts formula and combining Proposition \ref{propwregu} give that
\begin{align*}
&\quad\int_{0}^t\Big\langle w^{S,N}_{\nu^{\epsilon},\underline{s}}(s)-w^{S,N}_{\nu,\underline{s}}(s),-\bfi P^Nw^{S,N}_{\nu^{\epsilon},\underline{s}}(s)(\nu^{\epsilon}(s)-\nu(s))\Big\rangle\rmd s\\
&=\langle w^{S,N}_{\nu^{\epsilon},\underline{t}}(t)-w^{S,N}_{\nu,\underline{t}}(t),\psi_{\epsilon}(t)\rangle -\int_0^t\Big\langle -\bfi P^Nw^{S,N}_{\nu^{\epsilon},\underline{s}}(s)\nu^{\epsilon}(s)+\bfi P^Nw^{S,N}_{\nu,\underline{s}}(s)\nu(s),\psi_{\epsilon}(s)\Big\rangle \rmd s\\
&\leq \frac{1}{4}\|w^{S,N}_{\nu^{\epsilon},\underline{t}}(t)-w^{S,N}_{\nu,\underline{t}}(t)\|^2+C\sup_{s\in[0,t]}\|\psi_{\epsilon}(s)\|^2+C\sup_{s\in[0,t]}\|\psi_{\epsilon}(s)\|.
\end{align*}
Hence, $$\|w^{S,N}_{\nu^{\epsilon},m}(t)-w^{S,N}_{\nu,m}(t)\|^2\leq  C\sup_{t\in[0,t_m]}\|w^N_{\nu^{\epsilon},\underline{t}}-w^N_{\nu,\underline{t}}\|^2+C\sup_{s\in[0,T]}\|\psi_{\epsilon}(s)\|^2+C\sup_{s\in[0,T]}\|\psi_{\epsilon}(s)\|.$$

We use the induction method to prove the compactness. Suppose that $\sup_{t\in[0,t_m]}\|w^N_{\nu^{\epsilon},\underline{t}}-w^N_{\nu,\underline{t}}\|^2\to0$ as $\epsilon\to0,$ then we show that 
$\sup_{t\in[0,t_{m+1}]}\|w^N_{\nu^{\epsilon},\underline{t}}-w^N_{\nu,\underline{t}}\|^2\to 0$ as $\epsilon\to0.$ 
Then it suffices  to show that  
\begin{align}\label{psito0}
\sup_{s\in[0,T]}\|\psi_{\epsilon}(s)\|^2+\sup_{s\in[0,T]}\|\psi_{\epsilon}(s)\|\to 0\text{ as }\epsilon\to0.
\end{align}
In fact, for $h\in \mathcal{S}_M,$ it follows from Proposition \ref{propwregu} that
\begin{align*}
\int_0^T\| \bfi Q \overline{w^{S,N}_{\nu^{\epsilon}\underline{s}}}(s)P^Nh(s)\|_{Q^{\frac{1}{2}}\bbH}^2\rmd s&\leq C\int_0^T\|Q^{\frac12}\|^2_{\mathcal{L}(H)}\|w^{S,N}_{\nu^{\epsilon}\underline{s}}(s)\|^2_{\bbH^1}\|h(s)\|^2_{H}\rmd s\\
&\leq C\int_0^T\|h(s)\|_{H_0}^2\rmd s<\infty,
\end{align*}
which together with $\nu^{\epsilon}\to \nu$ in $\mathcal{S}_M$ yields 
\begin{align*}
\lim_{\epsilon\to0}\int_0^T\big\langle -\bfi P^N w^{S,N}_{\nu^{\epsilon},\underline{s}}(s)(\nu^{\epsilon}(s)-\nu(s)),h(s)\big\rangle\rmd s=0.
\end{align*}
This means that $-\bfi P^Nw^{S,N}_{\nu^{\epsilon},\underline{\cdot}}(\nu^{\epsilon}-\nu)(\cdot)$ converges to $0$ as $\epsilon\to0$ in $L^2([0,T];\bbH)$ with respect to the weak topology. Moreover, one can show that the set $\{\psi_{\epsilon}\}_{\epsilon\in(0,1)}$ is  
a compact subset in $\mathcal{C}([0,T];\bbH_N)$ by the Ascoli theorem (see \cite[Theorem $47.1$]{topology}). In fact,
the equicontinuous of $\{\psi_{\epsilon}\}_{\epsilon\in(0,1)}$ can be deduced by
\begin{align*}
&\quad \Big\|\int_{t_1}^{t_2}-\bfi P^Nw^{S,N}_{\nu^{\epsilon},\underline{s}}(s)(\nu^{\epsilon}-\nu)(s)\rmd s\Big\|\\
&\leq \sup_{s\in[0,T]}\|w^{S,N}_{\nu^{\epsilon},\underline{s}}(s)\|\sqrt{|t_2-t_1|}\Big(\int_{t_1}^{t_2}\|\nu^{\epsilon}(s)-\nu(s)\|^2_{H^1}\rmd s\Big)^{\frac{1}{2}}\\
&\leq \sup_{s\in[0,T]}\|w^{S,N}_{\nu^{\epsilon},\underline{s}}(s)\|\sqrt{|t_2-t_1|}\Big(\int_{t_1}^{t_2}\|\nu^{\epsilon}(s)\|^2_{H^1}+\|\nu(s)\|^2_{H^1}\rmd s\Big)^{\frac{1}{2}}
\leq C\sqrt{|t_2-t_1|M}.
\end{align*}
 Since 
$$
\sup_{\epsilon\in(0,1)}\Big\|\int_0^t-\bfi P^Nw^{S,N}_{\nu^{\epsilon},\underline{s}}(s)(\nu^{\epsilon}-\nu)(s)\rmd s\Big\|_{\bbH^1}\leq C\sup_{s\in[0,T]}\|w^{S,N}_{\nu^{\epsilon},\underline{s}}(s)\|_{\bbH^1}\sup_{\epsilon\in(0,1)}\int_0^T\|(\nu^{\epsilon}-\nu)(s)\|_{H^1}\rmd s\leq C,
$$
 the compact Sobolev embedding $\bbH^1\hookrightarrow \bbH$ implies that
$\{\psi_{\epsilon}(t)\}_{\epsilon\in(0,1)}$ is compact in $\bbH$ for each fixed $t\ge 0.$ Thus
$\{\psi_{\epsilon}\}_{\epsilon\in(0,1)}$ is compact in $\mathcal{C}([0,T];\bbH_N)$,
 which combining \cite[Proposition $3.3$, Section $\mathrm{VI}$]{Functional} shows that $\psi_{\epsilon}\to 0$ in $\mathcal{C}([0,T];\bbH_N).$ Thus \eqref{psito0} is proved.

Note that \eqref{psito0} also implies that \begin{align*}
\sup_{t\in[0,t_{m+1}]}\|w^{S,N}_{\nu^{\epsilon},m}(t)-w^{S,N}_{\nu,m}(t)\|^2
\to 0 \text{ as }\epsilon\to0\quad a.s.,
\end{align*} holds for the case of $m=0$. 
Combining the induction hypothesis,   we finally obtain
\begin{align*}
\sup_{t\in[0,t_{m+1}]}\|w^{S,N}_{\nu^{\epsilon},m}(t)-w^{S,N}_{\nu,m}(t)\|^2
\to 0 \text{ as }\epsilon\to0\quad a.s.,
\end{align*}
and thus $\sup_{t\in[0,T]}\|w^{S,N}_{\nu^{\epsilon},\underline{t}}(t)-w^{S,N}_{\nu,\underline{t}}(t)\|^2\to0$ as $\epsilon\to0\;\,a.s$. The proof is finished. 
\end{proof}

The following proposition shows that the solution of the stochastic controlled equation  \eqref{small_noise} converges to that of the skeleton equation \eqref{deterministic} in distribution in $\mathcal{C}([0,T];\bbH_N)$ under certain conditions. 
 
\begin{prop}\label{laplace2}
Let $M>0,$  Assumptions \ref{refined1} and \ref{refined2} hold, and let 
$\{\nu^{\epsilon}\}_{\epsilon\in(0,1)}\subset\mathcal{P}_M$  satisfy that $\nu^{\epsilon}\xrightarrow[\epsilon\to0]{d}\nu$ as $\mathcal{S}_M$-valued random variables. Then
$u^{S,N,\epsilon}_{\nu^{\epsilon},\underline{\cdot}}(\cdot)\xrightarrow[\epsilon\to0]{d}w^{S,N}_{\nu,\underline{\cdot}}(\cdot)$ in $\mathcal{C}([0,T];\bbH_N)$. 
\end{prop}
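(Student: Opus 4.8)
The plan is to decompose, for each $\epsilon\in(0,1)$,
\begin{align*}
u^{S,N,\epsilon}_{\nu^{\epsilon},\underline{\cdot}}(\cdot)=\big(u^{S,N,\epsilon}_{\nu^{\epsilon},\underline{\cdot}}(\cdot)-w^{S,N}_{\nu^{\epsilon},\underline{\cdot}}(\cdot)\big)+w^{S,N}_{\nu^{\epsilon},\underline{\cdot}}(\cdot),
\end{align*}
where $w^{S,N}_{\nu^{\epsilon},\underline{\cdot}}(\cdot)$ denotes the solution of the skeleton equation \eqref{deterministic} driven by the \emph{same} control $\nu^{\epsilon}$ (recall $\nu^{\epsilon}\in\mathcal{S}_M$ a.s.). I would then show that the first summand converges to $0$ in probability and that the second converges in distribution to $w^{S,N}_{\nu,\underline{\cdot}}(\cdot)$, and conclude by Slutsky's theorem (the converging-together lemma in the separable metric space $\mathcal{C}([0,T];\bbH_N)$). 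For the second summand, the proof of Proposition \ref{laplace1} in fact establishes that the solution map $\mathcal{S}_M\ni h\mapsto w^{S,N}_{h,\underline{\cdot}}(\cdot)\in\mathcal{C}([0,T];\bbH_N)$ is (sequentially) continuous for the weak topology on $\mathcal{S}_M$; since $\nu^{\epsilon}\xrightarrow[\epsilon\to0]{d}\nu$ as $\mathcal{S}_M$-valued random variables, the continuous mapping theorem gives $w^{S,N}_{\nu^{\epsilon},\underline{\cdot}}(\cdot)\xrightarrow[\epsilon\to0]{d}w^{S,N}_{\nu,\underline{\cdot}}(\cdot)$ in $\mathcal{C}([0,T];\bbH_N)$.

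The core of the argument is the estimate
\begin{align*}
\bbE\Big[\sup_{t\in[0,T]}\big\|u^{S,N,\epsilon}_{\nu^{\epsilon},\underline{t}}(t)-w^{S,N}_{\nu^{\epsilon},\underline{t}}(t)\big\|^2\Big]\leq C\epsilon,
\end{align*}
with $C$ independent of $\epsilon$, which I plan to prove step by step along the partition. Setting $D^{\epsilon}_m:=u^{N,\epsilon}_{\nu^{\epsilon},m}-w^{N}_{\nu^{\epsilon},m}$ (so $D^{\epsilon}_0=0$), on each $T_m$ the two deterministic substeps of \eqref{small_noise} and \eqref{deterministic} coincide in form, so the chain rule applied to $\|u^{D,N,\epsilon}_{\nu^{\epsilon},m}(t)-w^{D,N}_{\nu^{\epsilon},m}(t)\|^2$---the $\bfi\Delta$ contribution vanishing by skew-adjointness, the cubic difference handled via the cubic difference formula together with the a.s.\ mass bound \eqref{ldp_regu_1}, the $\bbH^1$-bounds \eqref{ldp_regu_2} and those of Proposition \ref{propwregu}, and the inverse inequality---yields $\|u^{D,N,\epsilon}_{\nu^{\epsilon},m}(t_{m+1})-w^{D,N}_{\nu^{\epsilon},m}(t_{m+1})\|^2\leq(1+C\tau_m)\|D^{\epsilon}_m\|^2$. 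For the stochastic substep, I would apply the It\^o formula to $\|u^{S,N,\epsilon}_{\nu^{\epsilon},m}(t)-w^{S,N}_{\nu^{\epsilon},m}(t)\|^2$ on $T_m$: the control-drift difference $-\bfi P^N\big(u^{S,N,\epsilon}_{\nu^{\epsilon},m}-w^{S,N}_{\nu^{\epsilon},m}\big)\nu^{\epsilon}$ contributes nothing since $\nu^{\epsilon}$ is real-valued, whence $\big\<v,-\bfi P^Nv\nu^{\epsilon}\big\>=0$; the surviving terms---the $-\tfrac{\epsilon}{2}P^NF_Q$ drift, the It\^o correction $\epsilon\sum_k\|P^Nu^{S,N,\epsilon}_{\nu^{\epsilon},m}Q^{\frac12}e_k\|^2$, and, after the Burkholder--Davis--Gundy inequality, the $\sqrt\epsilon$-martingale---are each of order $\epsilon$, bounded by $C\epsilon\tau_m$ using \eqref{ldp_regu_1} and $Q^{\frac12}\in\mathcal{L}^2_2$. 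Taking expectations gives $\bbE[\|D^{\epsilon}_{m+1}\|^2]\leq(1+C\tau_m)\bbE[\|D^{\epsilon}_m\|^2]+C\epsilon\tau_m$, and the discrete Gr\"onwall inequality with $\sum_m\tau_m=T$ yields $\sup_m\bbE[\|D^{\epsilon}_m\|^2]\leq C\epsilon$; feeding this back into the per-interval estimate (now keeping the supremum and using Burkholder--Davis--Gundy) and summing over the finitely many subintervals ($M_T\leq T\tau_{min}^{-1}\delta^{-1}$ a.s., by Remark \ref{rmktau_min}) gives the displayed bound, hence convergence of the first summand to $0$ in $L^2(\Omega;\mathcal{C}([0,T];\bbH_N))$, and in particular in probability. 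Slutsky's theorem applied to the decomposition then yields $u^{S,N,\epsilon}_{\nu^{\epsilon},\underline{\cdot}}(\cdot)\xrightarrow[\epsilon\to0]{d}w^{S,N}_{\nu,\underline{\cdot}}(\cdot)$.

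The only place where the merely \emph{weak} convergence of $\nu^{\epsilon}$ could cause trouble is the treatment of the product $u^{S,N,\epsilon}_{\nu^{\epsilon},m}\nu^{\epsilon}$; the decomposition above circumvents this by isolating the weak-convergence effect in the term $w^{S,N}_{\nu^{\epsilon},\underline{\cdot}}-w^{S,N}_{\nu,\underline{\cdot}}$, where it is handled by the continuity of the skeleton solution map already proven in Proposition \ref{laplace1}, while in the remaining term $u^{S,N,\epsilon}_{\nu^{\epsilon},\underline{\cdot}}-w^{S,N}_{\nu^{\epsilon},\underline{\cdot}}$ the control enters only through $-\bfi P^N(\cdot)\nu^{\epsilon}$, which is annihilated by the $\bbH$-inner product. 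The residual work---the Gr\"onwall estimate across the alternating deterministic/stochastic substeps---is where the bookkeeping is heaviest, but it is comparatively routine here: because $\bbH_N$ is finite-dimensional, the a.s.\ mass bound \eqref{ldp_regu_1} combined with the inverse inequality yields a.s.\ uniform control of every $L^{\infty}(\OO)$, $\bbH^1$ and $\bbH$ norm that appears, so no exponential integrability (unlike in the strong-convergence analysis of Section \ref{section3}) is needed, and the only genuinely $\epsilon$-dependent contributions are the manifestly small $\tfrac{\epsilon}{2}F_Q$, $\epsilon\sum_k\|\cdot\|^2$ and $\sqrt\epsilon$-noise terms.
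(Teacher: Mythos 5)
Your argument is correct in substance but follows a genuinely different route from the paper. The paper proceeds in two steps: it first establishes weak relative compactness of $\{u^{S,N,\epsilon}_{\nu^{\epsilon},\underline{\cdot}}(\cdot)\}_{\epsilon\in(0,1)}$ in $\mathcal{C}([0,T];\bbH_N)$ via a conditional-increment tightness criterion (Ethier--Kurtz), and then passes to a subsequence, invokes the Skorohod representation theorem, identifies the limit as a solution of the skeleton scheme through the auxiliary functional $\Upsilon_t$ (whose expectation along the controlled scheme is shown to be $O(\epsilon_n)$), and concludes by uniqueness of the limit law and the subsequence principle. You instead insert the skeleton solution driven by the same control, prove the quantitative bound $\bbE\big[\sup_{t\in[0,T]}\|u^{S,N,\epsilon}_{\nu^{\epsilon},\underline t}(t)-w^{S,N}_{\nu^{\epsilon},\underline t}(t)\|^2\big]\leq C\epsilon$, and combine it with continuity of the skeleton solution map (which is indeed what the proof of Proposition \ref{laplace1} establishes; since $\mathcal{S}_M$ with the weak topology is a compact metrizable space, the continuous mapping theorem applies) and the converging-together (Slutsky) lemma. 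This avoids tightness and Skorohod representation altogether and is more quantitative; it works precisely because, as you note, $\bbH_N$ is finite-dimensional, so the a.s.\ mass bounds together with the inverse inequality control all norms that appear, the control term is annihilated by $\<v,-\bfi P^Nv\,\nu^{\epsilon}\>=0$ for real-valued $\nu^{\epsilon}$, and the only $\epsilon$-dependent contributions (the $\tfrac{\epsilon}{2}F_Q$ drift, the It\^o correction, and the $\sqrt{\epsilon}$-martingale, the latter absorbed after the Burkholder--Davis--Gundy and Young inequalities) are of order $\epsilon$. The paper's computation of $\bbE[\Upsilon_t]\leq C\epsilon_n$ is essentially the same cancellation in disguise, so your core estimate is consistent with what the paper verifies pointwise in $t$.

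Two points need tightening. First, the intermediate object $w^{S,N}_{\nu^{\epsilon},\underline{\cdot}}(\cdot)$ (the skeleton scheme driven by the \emph{random} control $\nu^{\epsilon}$) is covered neither by \eqref{ldp_regu_1}--\eqref{ldp_regu_2} nor, literally, by Proposition \ref{propwregu} or Assumptions \ref{refined1}--\ref{refined2}, which constrain $\tau_m$ only through $w^N_{\nu,m}$ (with the limiting deterministic $\nu$) and $u^{N,\epsilon}_{\nu^{\epsilon},m}$; to obtain the a.s.\ mass and $\bbH^1$ bounds for $w^{N}_{\nu^{\epsilon},m}$ that your Gr\"onwall argument uses, you must either read the adaptive-step assumptions as applying to this solution as well (consistent with how the paper itself applies Proposition \ref{propwregu} to every control appearing in the proof of Proposition \ref{laplace1}), or supply a short separate argument (feasible for fixed $N$ via the inverse inequality and a smallness condition on $\delta$). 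Second, your continuous-mapping step treats $w^{S,N}_{\nu^{\epsilon}}=\mathcal{G}^0\big(\int_0^{\cdot}\nu^{\epsilon}(s)\rmd s\big)$ as a measurable function of $\nu^{\epsilon}$ alone, whereas your difference estimate needs it computed on the same adaptive grid as $u^{S,N,\epsilon}_{\nu^{\epsilon}}$, and that grid may depend on the Wiener path through $u^{N,\epsilon}_{\nu^{\epsilon},m}$; if so, the two requirements conflict, and you should either fix the convention for the grid or add a stability-under-change-of-grid argument. The paper's definition of $\mathcal{G}^0$ suffers from the same ambiguity, so this is not a defect peculiar to your proof, but your route relies on it more explicitly and should state which convention is adopted.
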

\begin{proof}The proof is split into two steps. 

\textit{Step $1$: Show that $\{u^{S,N,\epsilon}_{\nu^{\epsilon},\underline{\cdot}}(\cdot)\}_{\epsilon\in(0,1)}$ is weakly relatively compact in $\mathcal{C}([0,T];\bbH_N)$.}

Following from  \cite[Theorem $8.6$, Chapter $3$]{Thomas86},  it suffices to prove that
\begin{itemize}
\item[(\romannumeral1)]  
$\{u^{S,N,\epsilon}_{\nu^{\epsilon},\underline{t}}(t)\}_{\epsilon\in(0,1)}$ is tight for every $t\in[0,T].$
\item[(\romannumeral2)]
There exists a family $\{\gamma_{\epsilon}(\theta,T):\theta,\epsilon\in(0,1)\}$ of nonnegative random variables satisfying
\begin{align*}
\mathbb{E}\Big[1\wedge\|u^{S,N,\epsilon}_{\nu^{\epsilon},\underline{t+\eta_1}}(t+\eta_1),u^{S,N,\epsilon}_{\nu^{\epsilon},\underline{t}}(t)\|^2\Big|\mathcal{F}_t\Big]\Big[1\wedge\|u^{S,N,\epsilon}_{\nu^{\epsilon},\underline{t}}(t),u^{S,N,\epsilon}_{\nu^{\epsilon},\underline{t-\eta_2}}(t-\eta_2)\|^2\Big]\leq \mathbb{E}[\gamma_{\epsilon}(\theta,T)|\mathcal{F}_t]
\end{align*}
for $0\leq t\leq T,0\leq \eta_1\leq \theta,$ and $0\leq \eta_2\leq \theta\wedge t$; In addition, 
\begin{align}\label{addition1}
\lim_{\theta\to 0}\sup_{\epsilon\in(0,1)}\mathbb{E}[\gamma_{\epsilon}(\theta,T)]=0
\end{align}
and 
\begin{align}\label{addition2}
\lim_{\theta\to 0}\sup_{\epsilon\in(0,1)}\mathbb{E}\big[\|u^{S,N,\epsilon}_{\nu^{\epsilon},\underline{\theta}}(\theta),u^{S,N,\epsilon}_{\nu^{\epsilon},\underline{0}}(0)\|^2\big]=0.
\end{align}
\end{itemize}

For the proof of (\romannumeral1), for arbitrary $\rho>0$ and $t\in [0,T],$ let $\Gamma_{\rho,t}:=\big\{x\in \bbH_N:\|x\|_{\bbH^1}\leq R(\rho)\big\}$ with $R(\rho)$ being determined later. The compact Sobolev embedding $\bbH^1\hookrightarrow \bbH$ implies that $\Gamma_{\rho,t}$ is compact in $\bbH.$ Since the Chebyshev inequality and \eqref{ldp_regu_2} give that
\begin{align*}
&\mathbb{P}\Big(u^{S,N,\epsilon}_{\nu^{\epsilon},\underline{t}}(t)\in \Gamma_{\rho,t}\Big)=\mathbb{P}\Big(\|u^{S,N,\epsilon}_{\nu^{\epsilon},\underline{t}}(t)\|_{\bbH^1}\leq R(\rho)\Big)\ge 1-\frac{\sup_{\epsilon\in(0,1)}\sup_{t\in[0,T]}\mathbb{E}[\|u^{S,N,\epsilon}_{\nu^{\epsilon},\underline{t}}(t)\|_{\bbH^1}]}{R(\rho)}\\
&\ge 1-\frac{C}{R(\rho)}=:1-\rho
\end{align*}
with $R(\rho)=\frac{C}{\rho},$ we obtain $\inf_{\epsilon\in(0,1)}\mathbb{P}\Big(u^{S,N,\epsilon}_{\nu^{\epsilon},\underline{t}}(t)\in \Gamma_{\rho,t}\Big)\ge 1-\rho.$ Hence, $\{u^{S,N,\epsilon}_{\nu^{\epsilon},\underline{t}}(t)\}_{\epsilon\in(0,1)}$ is tight.

For the proof of   (\romannumeral2),
 noting that $a_1a_2\leq a_1\mathbb{I}_{A}+ a_2\mathbb{I}_{A^c}$ for $0<a_1,a_2\leq 1$ and a measurable set $A$, where $\mathbb{I}$ is the indicator function,  we first prove the existence of  $\{\gamma_{\epsilon}(\theta,T)\}_{\epsilon\in(0,1)}$  such that
\begin{align*}
&\quad\;\mathbb{E}\Big[\|u^{S,N,\epsilon}_{\nu^{\epsilon},\underline{t}}(t+\eta_1)-u^{S,N,\epsilon}_{\nu,\underline{t}}(t)\|^2\mathbb{I}_{\{\bar{t}-t\ge \frac{1}{2}\tau_{min}\}}+\|u^{S,N,\epsilon}_{\nu^{\epsilon},\underline{t}}(t)-u^{S,N,\epsilon}_{\nu,\underline{t}}(t-\eta_2)\|^2\mathbb{I}_{\{\bar{t}-t<\frac{1}{2}\tau_{min}\}}\Big|\mathcal{F}_t\Big]\\
&\leq \mathbb{E}[\gamma_{\epsilon}(\theta,T)|\mathcal{F}_t]
\end{align*}
  for $0\leq t\leq T,\;0\leq \eta_1\leq \theta< (1\wedge \frac{1}{2}\tau_{min})$ and $0\leq \eta_2\leq t\wedge \theta$, where $\bar{t}:=\min\{t_m:t_m\ge t\}.$ Recall that $\tau_{min}$ is given in Remark \ref{rmktau_min}. 
  
Note that for $t\in\{t\in[0,T]:\bar{t}-t\ge \frac{1}{2}\tau_{min}\},$
  \begin{align*}
  &\quad \;\|u^{S,N,\epsilon}_{\nu^{\epsilon},\underline{t}}(t+\eta_1)-u^{S,N,\epsilon}_{\nu^{\epsilon},\underline{t}}(t)\|^2\\
  &=\Big\|\int_t^{t+\eta_1}-\frac{\epsilon}{2}P^NF_Qu^{S,N,\epsilon}_{\nu^{\epsilon},\underline{t}}(s)-\bfi P^Nu^{S,N,\epsilon}_{\nu^{\epsilon},\underline{t}}(s)\nu^{\epsilon}(s)\rmd s-\int_t^{t+\eta_1}\bfi\sqrt{\epsilon}P^Nu^{S,N,\epsilon}_{\nu^{\epsilon},\underline{t}}(s)\rmd W(s)\Big\|^2\\
  &\leq C\theta
  \int_t^{t+\theta}\|u^{S,N,\epsilon}_{\nu^{\epsilon},\underline{t}}(s)\|^2(\epsilon^2\|F_Q\|^2_{L^{\infty}(\OO)}+\|\nu^{\epsilon}(s)\|_{H^1}^2)\rmd s+C\epsilon\sup_{\eta_1\leq \theta}\Big\|\int_t^{t+\eta_1}P^Nu^{S,N,\epsilon}_{\nu^{\epsilon},\underline{t}}(s)\rmd W(s)\Big\|^2\\
  &=:\mathcal{I}_1,
  \end{align*}
  and for $t\in\{t\in[0,T]:\bar{t}-t<\frac{1}{2}\tau_{min}\},$
  \begin{align*}
  &\quad\; \|u^{S,N,\epsilon}_{\nu^{\epsilon},\underline{t}}(t)-u^{S,N,\epsilon}_{\nu^{\epsilon},\underline{t}}(t-\eta_2)\|^2\\
  &\leq C\theta \int_{t-\theta}^t\|u^{S,N,\epsilon}_{\nu^{\epsilon},\underline{t}}(s)\|^2(\epsilon^2\|F_Q\|_{L^{\infty}(\OO)}^2+\|\nu^{\epsilon}(s)\|_{H^1}^2)\rmd s+C\epsilon\sup_{\eta_2\leq t\wedge \theta}\Big\|\int_{t-\eta_2}^tP^N u^{S,N,\epsilon}_{\nu^{\epsilon},\underline{t}}(s)\rmd W(s)\Big\|^2\\
  &=:\mathcal{I}_2.
  \end{align*}
 The random variable $\gamma_{\epsilon}(\theta, T)$ is chosen as $\gamma_{\epsilon}(\theta,T)=\mathcal{I}_1\mathbb{I}_{\{\bar{t}-t\ge \frac{1}{2}\tau_{min}\}}+\mathcal{I}_2\mathbb{I}_{\{\bar{t}-t< \frac{1}{2}\tau_{min}\}}$ for each $\epsilon\in(0,1)$ and $\theta<(1\wedge \frac12\tau_{min}).$ And
we remark that if $\frac12\tau_{min}\leq\theta<1,$ then we let $\gamma_{\epsilon}(\theta,T)\equiv 1$.
 Then it follows from   the Burkholder--Davis--Gundy inequality and \eqref{ldp_regu_1} that  \begin{align*}
 &\quad\;\sup_{\epsilon\in(0,1)}\mathbb{E}[\gamma_{\epsilon}(\theta,T)]\\
 &\leq \sup_{\epsilon\in(0,1)}\Big\{C\theta^2\epsilon^2+C\theta+ C\epsilon  \mathbb{E}\Big[\int_t^{t+\theta}\|u^{S,N,\epsilon}_{\nu^{\epsilon},\underline{t}}(s)\|^2\|Q^{\frac{1}{2}}\|^2_{\mathcal{L}^1_2}\rmd s\Big]+C\epsilon \mathbb{E}\Big[\int_{t-\theta}^t\|u^{S,N,\epsilon}_{\nu^{\epsilon},\underline{t}}(s)\|^2\|Q^{\frac{1}{2}}\|^2_{\mathcal{L}^1_2}\rmd s\Big]\Big\}\\
 &\leq \sup_{\epsilon\in(0,1)}\Big[C\theta(\theta\epsilon ^2+1)+C\epsilon\theta\Big]\leq C\theta\to0\text{ as }\theta\to0,
  \end{align*}
 which proves  \eqref{addition1}.
   Finally,    it is deduced from 
\begin{align*}
\lim_{\theta\to0}\sup_{\epsilon\in(0,1)}
\mathbb{E}\Big[\|u^{S,N,\epsilon}_{\nu^{\epsilon},0}(\theta)-u^{S,N,\epsilon}_{\nu^{\epsilon},0}(0)\|^2\Big]\leq \lim_{\theta\to0}\sup_{\epsilon\in(0,1)}\Big[C\theta(\theta\epsilon ^2+1)+C\epsilon \theta\Big]=0
\end{align*}
that \eqref{addition2} 
is satisfied, which 
 finishes the proof that $\{u^{S,N,\epsilon}_{\nu^{\epsilon},\underline{\cdot}}(\cdot)\}_{\epsilon\in(0,1)}$ is weakly relatively compact in $\mathcal{C}([0,T];\bbH_N).$ 

\textit{Step $2$: Show that $u^{S,N,\epsilon}_{\nu^{\epsilon},\underline{\cdot}}(\cdot)\xrightarrow[\epsilon\to0]{d}w^{S,N}_{\nu,\underline{\cdot}}(\cdot)$ if $\nu^{\epsilon}\xrightarrow[\epsilon\to0]{d}\nu.$}

 Since $\{\nu^{\epsilon}\}$ is tight and $\mathcal{S}_M$ is a compact Polish space, $\{\nu^{\epsilon}\}$ is weakly relatively compact based on the Prohorov theorem (see e.g. \cite[Theorem $A.3.15$]{Weakbook}). Thus 
$\{(u^{S,N,\epsilon}_{\nu^{\epsilon},\underline{\cdot}}(\cdot),\nu^{\epsilon})\}_{\epsilon\in(0,1)}$ is weakly relatively compact in $\mathcal{C}([0,T];\bbH_N)\times \mathcal{S}_M$. Hence, there exists a subsequence $\epsilon_n\to0$ $ (\text{as }n\to\infty)$ such that $\{(u^{S,N,\epsilon_n}_{\nu^{\epsilon_n},\underline{\cdot}}(\cdot),\nu^{\epsilon_n})\}_{\epsilon_n\in(0,1)}$ converges in distribution to an element taking values in $\mathcal{C}([0,T];\bbH_N)\times \mathcal{S}_M.$ It follows from the Skorohod representation theorem (see e.g. \cite[Theorem $A.3.9$]{Weakbook}) that there exists a probability space $(\tilde{\Omega},\tilde{\mathcal{F}},\tilde{\mathbb{P}})$ on which a $\mathcal{C}([0,T];\bbH_N)\times \mathcal{S}_M$-valued random variable $(\widetilde{u^{S,N}_{\underline{\cdot}}}(\cdot),\tilde{\nu})$ is such that $\{(u^{S,N,\epsilon_n}_{\nu^{\epsilon_n},\underline{\cdot}}(\cdot),\nu^{\epsilon_n})\}_{\epsilon_n\in(0,1)}$ converges to $(\widetilde{u^{S,N}_{\underline{\cdot}}}(\cdot),\tilde{\nu})$ in distribution. Denote by $\mathbb{E}_{\tilde{\mathbb{P}}}$ the expectation with respect to $\tilde{\mathbb{P}}.$ We need to show that $\widetilde{u^{S,N}_{\underline{\cdot}}}(\cdot)$ satisfies that for $t\in T_m,$
\begin{equation}
\left\{
\begin{array}{ll}
\rmd \widetilde{u^{D,N}_{m}}(t)=\bfi \Delta \widetilde{u^{D,N}_{m}}(t)\rmd t+\bfi \lambda S^N(t-t_m)P^N|\widetilde{u^{N}_{m}}|^2\widetilde{u^{N}_{m}}\rmd t,\quad  \widetilde{u^{D,N}_{m}}(t_m)=\widetilde{u^{S,N}_{m-1}}(t_m),\vspace{1mm}\\
\rmd \widetilde{u^{S,N}_{m}}(t)=-\bfi P^N\widetilde{u^{S,N}_{m}}(t)\tilde{\nu}(t)\rmd t,\quad  \widetilde{u^{S,N}_{m}}(t_m)=\widetilde{u^{D,N}_{m}}(t_{m+1}).
\end{array}
\right.
\end{equation}

To this end, for $t\in T_m,$ define the map $\Upsilon_t:\mathcal{C}([0,T];\bbH_N)\times \mathcal{S}_M\to[0,1]$ by
\begin{align*}
\Upsilon_t(f,\phi)=1\wedge \Big\|f(t)-S^N(\tau_m)\big(f(t_m)+\bfi\lambda P^N|f(t_m)|^2f(t_m)\tau_m\big)+\int_{t_m}^t\bfi P^Nf(s)\phi(s)\rmd s\Big\|.
\end{align*}
We claim that $\Upsilon_t$ is continuous and bounded.
In fact, noting that $\mathcal{C}([0,T];\bbH^1)$ is dense in $\mathcal{C}([0,T];\bbH),$ we let $f_n\to f$ in $\mathcal{C}([0,T];\bbH)$ with $\sup_{n\in\mathbb{N}_+}\|f_n\|_{\bbH^1}\vee\|f\|_{\bbH^1}<\infty$ and let $\phi_n\to\phi$ in $\mathcal{S}_M$ with respect to the weak topology. 
By $\big|1\wedge \|x_1\|-1\wedge \|x_2\|\big|\leq 1\wedge \|x_1-x_2\|\leq \|x_1-x_2\|$, we arrive at
\begin{align}\label{conver}
&\quad|\Upsilon_t(f_n,\phi_n)-\Upsilon_t(f,\phi)|\notag\\&\leq C\|f_n-f\|_{\mathcal{C}([0,T];\bbH)}\Big(1+\int_{t_m}^t\|\phi_n(s)\|_{H^1}\rmd s\Big)+\Big\|\int_{t_m}^t\bfi P^Nf(s)(\phi_n(s)-\phi(s))\rmd s\Big\|.
\end{align}
Similar to the proof of the convergence of $\{\psi_{\epsilon}\}_{\epsilon\in(0,1)}$ in Proposition \ref{laplace1}, the last term in the right hand of \eqref{conver} converges to $0$ uniformly with respect to $t$.

Hence, 
\begin{align*}
\lim_{n\to\infty}\mathbb{E}\Big[\Upsilon_t(u^{S,N,\epsilon_n}_{\nu^{\epsilon_n},\underline{\cdot}}(\cdot),\nu^{\epsilon_n})\Big]=\mathbb{E}_{\tilde{\mathbb{P}}}[\Upsilon_t(\widetilde{u^{S,N}_{\underline{\cdot}}}(\cdot),\tilde{\nu})];
\end{align*}
see e.g. \cite[Page $375$, Appendix $A.3$]{Weakbook}.
Since for $t\in T_m,$
\begin{align*}
\mathbb{E}\Big[\Upsilon_t(u^{S,N,\epsilon_n}_{\nu^{\epsilon_n},\underline{\cdot}}(\cdot),\nu^{\epsilon_n})\Big]&=1\wedge \mathbb{E}\Big[\Big\|\epsilon_n\int_{t_m}^t\frac12P^NF_Qu^{S,N,\epsilon_n}_{\nu^{\epsilon_n},m}(s)\rmd s+\sqrt{\epsilon_n}\int_{t_m}^t\bfi P^Nu^{S,N,\epsilon_n}_{\nu^{\epsilon_n},m}(s)\rmd W(s)\Big\|\Big]\\
&\leq  \frac{\epsilon_n}{2}\int_{0}^T\|F_Q\|_{L^{\infty}(\OO)}\|u^{S,N,\epsilon_n}_{\nu^{\epsilon_n},m}(s)\|\rmd s+\sqrt{\epsilon_n}\Big(\mathbb{E}\Big\|\int_{t_m}^t P^Nu^{S,N,\epsilon_n}_{\nu^{\epsilon_n},m}(s)\rmd W(s)\Big\|^2\Big)^{\frac{1}{2}}\\
&\leq C\epsilon_n\to0\text{ as }n\to\infty,
\end{align*}
we obtain
$
\mathbb{E}_{\tilde{\mathbb{P}}}\Big[\Upsilon_t(\widetilde{u^{S,N}_{\underline{\cdot}}}(\cdot),\tilde{\nu})
\Big]=0.$
 It follows from  the definition of $\Upsilon_t$ that 
\begin{align*}
\widetilde{u^{S,N}_{\underline{\cdot}}}(\cdot)=\mathcal{G}^0\Big(\int_0^{\cdot}\tilde{\nu}(s)\rmd s\Big)\quad \tilde{\mathbb{P}}\text{-a.s.}
\end{align*}
Moreover, due to $(u^{S,N,\epsilon_n}_{\nu^{\epsilon_n},\underline{\cdot}}(\cdot),\nu^{\epsilon_n})\xrightarrow[\epsilon_n\to0]{d}(\widetilde{u^{S,N}_{\underline{\cdot}}}(\cdot),\tilde{\nu})$, we have
$\nu^{\epsilon_n}\xrightarrow[\epsilon_n\to0]{d}\tilde{\nu}$, which together with $\nu^{\epsilon}\xrightarrow[\epsilon\to0]{d}\nu$ yields that $\nu\overset{d}{=}\tilde{\nu}$ and consequently $w^{S,N}_{\nu,\underline{\cdot}}(\cdot)\overset{d}{=}\widetilde{u^{S,N}_{\underline{\cdot}}}(\cdot).$ Therefore, $$(u^{S,N,\epsilon_n}_{\nu^{\epsilon_n},\underline{\cdot}}(\cdot),\nu^{\epsilon_n})\xrightarrow[\epsilon_n\to0]{d}(w^{S,N}_{\nu,\underline{\cdot}}(\cdot),\nu).$$ Repeating the above procedure, we derive that for any subsequence $\vartheta_n\to0,$ there exists some subsubsequence $\vartheta_{n_k}\to0,$ such that  $(u^{S,N,\vartheta_{n_k}}_{\nu^{\vartheta_{n_k}},\underline{\cdot}}(\cdot),\nu^{\vartheta_{n_k}})\xrightarrow[\vartheta_{n_k}\to0]{d}(w^{S,N}_{\nu,\underline{\cdot}}(\cdot),\nu),$ which finally implies that
$(u^{S,N,\epsilon}_{\nu^{\epsilon},\underline{\cdot}}(\cdot),\nu^{\epsilon})\xrightarrow[\epsilon\to0]{d}(w^{S,N}_{\nu,\underline{\cdot}}(\cdot),\nu);$ see e.g. \cite[Theorem $2.6$]{convergence}. 

Combining \textit{Steps} $1$-$2$, we finish the proof.
\end{proof}

\begin{proof}[Proof of Theorem \ref{mainLDP}]
Following  \cite[Theorem $4.4$]{BD1} or \cite[Theorem $5$]{BD2}, it suffices to prove that
\begin{itemize}
\item[(\romannumeral 1)] for any fixed $M<\infty$, 
\begin{align*}
K_M=\Big\{\mathcal{G}^0\Big(\int_0^{\cdot}\nu (s)\mathrm{d}s\Big),\nu\in\mathcal{S}_M\Big\}
\end{align*}
is a compact subset of $\mathcal{C}([0,T];\bbH_N)$;
\item[(\romannumeral 2)] for $M<\infty$ and $\{\nu^{\epsilon}\}_{\epsilon\in(0,1)}\subset \mathcal{P}_M$ such that $\nu^{\epsilon}\xrightarrow[\epsilon\to0]{d}\nu$ as $\mathcal{S}_M$-valued random variables,
\begin{align*}
\mathcal{G}^{\epsilon}\Big(\sqrt{\epsilon}W+\int_0^{\cdot}\nu^{\epsilon}(s)\rmd s\Big)\xrightarrow[\epsilon\to0]{d} \mathcal{G}^0\Big(\int_0^{\cdot}\nu (s)\rmd s\Big),
\end{align*}
\end{itemize}
which are given in Propositions \ref{laplace1} and \ref{laplace2}, respectively.  
\end{proof}

 Recall that the mass conservation law $\|u(t)\|^2=\|u_0\|^2\;\forall \,t\in[0,T]$ holds for both the stochastic NLS equation  \eqref{schrod} and the split equation \eqref{u^D_tau_m(t)}.
Even though the mass can not be preserved exactly by the adaptive fully discrete scheme \eqref{Itoform},  the error of the masses between   solutions of \eqref{Itoform}   and \eqref{schrod} can be given by means of the LDP for the numerical solution.

\begin{coro}
 Under assumptions in Theorem \ref{mainLDP},  for any $\rho>0$, there is  some $\epsilon_0>0$ such that for $ \epsilon<\epsilon_0,$
\begin{align*}
&\quad\exp\Big\{-\frac{1}{\epsilon}\inf_{x\in G^1_{\rho}}I(x)\Big\}+\exp\Big\{-\frac{1}{\epsilon}\inf_{x\in G^2_{\rho}}I(x)\Big\}\\&\leq\mathbb{P}\Big(\Big|\sup_{t\in[0,T]}\|{u^{S,N,\epsilon}_{t,\underline{t}}}\|^2-\|u_0\|^2\Big|\ge\rho\Big)
\leq \exp\Big\{-\frac{1}{\epsilon}\inf_{x\in F^1_{\rho}}I(x)\Big\}+\exp\Big\{-\frac{1}{\epsilon}\inf_{x\in F^2_{\rho}}I(x)\Big\},
\end{align*}
where $G^1_{\rho}=\big\{x\in \mathcal{C}([0,T];\bbH_N):\sup_{t\in[0,T]}\|x(t)\|^2>\|u_0\|^2+\rho+\hat{\varepsilon}\big\},\;G^2_{\rho}=\big\{x\in \mathcal{C}([0,T];\bbH_N):\sup_{t\in[0,T]}\|x(t)\|^2<\|u_0\|^2-\rho-\hat{\varepsilon}\big\}$ with $\hat{\varepsilon}>0$ being a small number, $F^1_{\rho}=\big\{x\in \mathcal{C}([0,T];\bbH_N):\sup_{t\in[0,T]}\|x\|^2\ge \|u_0\|^2+\rho\big\},\; F^2_{\rho}=\big\{x\in \mathcal{C}([0,T];\bbH_N):\sup_{t\in[0,T]}\|x(t)\|^2\leq \|u_0\|^2-\rho\big\}$, and $I$ is given in Theorem \ref{mainLDP}.
\end{coro}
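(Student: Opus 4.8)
The plan is to deduce the mass-error estimate directly from the large deviation principle established in Theorem \ref{mainLDP}, applied to carefully chosen open and closed subsets of $\mathcal{C}([0,T];\bbH_N)$. The key observation is that the event $\big\{\big|\sup_{t\in[0,T]}\|u^{S,N,\epsilon}_{t,\underline{t}}\|^2-\|u_0\|^2\big|\ge\rho\big\}$ can be sandwiched between the event that $u^{S,N,\epsilon}_{\cdot,\underline{\cdot}}$ lies in a closed set and the event that it lies in an open set. Indeed, writing $F_\rho:=F^1_\rho\cup F^2_\rho$ and $G_\rho:=G^1_\rho\cup G^2_\rho$, one checks from the definitions that $G_\rho$ is open in $\mathcal{C}([0,T];\bbH_N)$ (since $x\mapsto\sup_{t\in[0,T]}\|x(t)\|^2$ is continuous and these are strict-inequality sublevel/superlevel sets), that $F_\rho$ is closed (non-strict inequalities), and that $G_\rho\subset\big\{\big|\sup_t\|u^{S,N,\epsilon}_{t,\underline t}\|^2-\|u_0\|^2\big|>\rho+\hat\varepsilon\big\}\subset\big\{\big|\sup_t\|u^{S,N,\epsilon}_{t,\underline t}\|^2-\|u_0\|^2\big|\ge\rho\big\}\subset F_\rho$.

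First I would apply the upper bound (\romannumeral1) of Theorem \ref{mainLDP} to the closed set $F_\rho$: this gives $\limsup_{\epsilon\to0}\epsilon\log\mathbb{P}(u^{S,N,\epsilon}_{\cdot,\underline\cdot}\in F_\rho)\le -\inf_{x\in F_\rho}I(x)=-\min\{\inf_{x\in F^1_\rho}I(x),\inf_{x\in F^2_\rho}I(x)\}$. Unwinding the definition of $\limsup$, for every $\rho>0$ there is $\epsilon_0>0$ so that for $\epsilon<\epsilon_0$ one has $\mathbb{P}(u^{S,N,\epsilon}_{\cdot,\underline\cdot}\in F_\rho)\le\exp\{-\tfrac1\epsilon\inf_{x\in F_\rho}I(x)\}$ up to the trivial bound $\exp\{-\tfrac1\epsilon\inf_{F^1_\rho}I\}+\exp\{-\tfrac1\epsilon\inf_{F^2_\rho}I\}\ge\exp\{-\tfrac1\epsilon\min(\cdots)\}$; since $\mathbb{P}\big(\big|\sup_t\|u^{S,N,\epsilon}_{t,\underline t}\|^2-\|u_0\|^2\big|\ge\rho\big)\le\mathbb{P}(u^{S,N,\epsilon}_{\cdot,\underline\cdot}\in F_\rho)$, the right-hand inequality follows. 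Symmetrically, I would apply the lower bound (\romannumeral2) to the open set $G_\rho$ to get $\liminf_{\epsilon\to0}\epsilon\log\mathbb{P}(u^{S,N,\epsilon}_{\cdot,\underline\cdot}\in G_\rho)\ge-\inf_{x\in G_\rho}I(x)$; combined with the inclusion $G_\rho\subset\big\{\big|\sup_t\|u^{S,N,\epsilon}_{t,\underline t}\|^2-\|u_0\|^2\big|\ge\rho\big\}$ and the elementary fact that $\mathbb{P}(A\cup B)\ge\tfrac12(\mathbb{P}(A)+\mathbb{P}(B))$ together with $\inf_{G_\rho}I=\min(\inf_{G^1_\rho}I,\inf_{G^2_\rho}I)$, this produces (after absorbing constants into $\epsilon_0$) the left-hand inequality $\exp\{-\tfrac1\epsilon\inf_{G^1_\rho}I\}+\exp\{-\tfrac1\epsilon\inf_{G^2_\rho}I\}\le\mathbb{P}(\cdots)$.

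The main technical point — and the step I expect to require the most care — is the translation between the asymptotic $\limsup$/$\liminf$ statements of the LDP and the non-asymptotic two-sided inequality claimed in the corollary, valid for all $\epsilon<\epsilon_0$. This is essentially a matter of choosing the auxiliary number $\hat\varepsilon>0$ and shrinking $\epsilon_0$: the role of $\hat\varepsilon$ is precisely to create a gap so that the open set $G_\rho$ (defined with the extra slack $\rho+\hat\varepsilon$) is strictly contained in the target event, which is what lets the open-set lower bound be transferred; and the constants relating $\epsilon\log\mathbb{P}$ to its limit are absorbed by passing from $\inf_{F_\rho}I$ (respectively $\inf_{G_\rho}I$) to the sum of the two exponentials, using $e^{-a}+e^{-b}\ge e^{-\min(a,b)}$ on one side and $e^{-a}+e^{-b}\le 2e^{-\min(a,b)}$ on the other, the factor $2$ being harmless at the exponential scale. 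I would also remark that the continuity of the functional $x\mapsto\sup_{t\in[0,T]}\|x(t)\|^2$ on $\mathcal{C}([0,T];\bbH_N)$, needed for the topological claims about $F^i_\rho$ and $G^i_\rho$, is immediate from the uniform norm, and that the sets are non-trivial (so the infima of $I$ over them are meaningful) because the deterministic skeleton solution with $\nu\equiv0$ conserves the mass while nearby controls can push $\sup_t\|\cdot\|^2$ across the thresholds. Assembling these pieces yields the stated chain of inequalities.
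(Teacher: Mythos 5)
Your upper-bound half is fine and is essentially the paper's argument in a slightly different packaging: the paper bounds $\mathbb{P}\big(\sup_t\|u^{S,N,\epsilon}_{t,\underline t}\|^2\ge\|u_0\|^2+\rho\big)$ and $\mathbb{P}\big(\sup_t\|u^{S,N,\epsilon}_{t,\underline t}\|^2\le\|u_0\|^2-\rho\big)$ separately by the LDP upper bound with the closed sets $F^1_\rho$, $F^2_\rho$, while you apply it once to the closed union $F^1_\rho\cup F^2_\rho$ and use $\inf_{F^1\cup F^2}I=\min(\inf_{F^1}I,\inf_{F^2}I)$ together with $e^{-\min(a,b)/\epsilon}\le e^{-a/\epsilon}+e^{-b/\epsilon}$; both versions translate the $\limsup$ statement into a finite-$\epsilon$ bound with the same level of informality as the paper itself, so I do not count that against you.

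The lower-bound half, however, has a genuine gap. The paper obtains the \emph{sum} of the two exponentials because the two events $\big\{\sup_t\|u^{S,N,\epsilon}_{t,\underline t}\|^2\ge\|u_0\|^2+\rho\big\}$ and $\big\{\sup_t\|u^{S,N,\epsilon}_{t,\underline t}\|^2\le\|u_0\|^2-\rho\big\}$ are disjoint, so the target probability is \emph{exactly} $\mathcal{II}_1+\mathcal{II}_2$, and then each term is bounded below separately via the LDP lower bound applied to the open set $G^1_\rho$, respectively $G^2_\rho$ (using the inclusions $\{u^{S,N,\epsilon}_{\cdot,\underline\cdot}\in G^j_\rho\}$ into the corresponding event). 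You instead apply the LDP lower bound only to the union $G_\rho=G^1_\rho\cup G^2_\rho$, which can only yield (up to slack) $\mathbb{P}(\cdot)\ge\exp\{-\tfrac1\epsilon\min(\inf_{G^1_\rho}I,\inf_{G^2_\rho}I)\}$; this cannot dominate the claimed left-hand side $\exp\{-\tfrac1\epsilon\inf_{G^1_\rho}I\}+\exp\{-\tfrac1\epsilon\inf_{G^2_\rho}I\}$, which may be as large as twice the minimal term (e.g.\ when the two infima coincide). The inequality $\mathbb{P}(A\cup B)\ge\tfrac12(\mathbb{P}(A)+\mathbb{P}(B))$ does not close this: it is only useful if you already have individual lower bounds on $\mathbb{P}(A)$ and $\mathbb{P}(B)$, which your argument never produces, and even then the factor $\tfrac12$ cannot be ``absorbed into $\epsilon_0$'' in an inequality stated without any multiplicative constant (a constant prefactor does not vanish as $\epsilon\to0$; it could only be traded against an additive slack in the exponent, which the statement does not allow). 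The repair is exactly the paper's route: use disjointness to write the probability as the exact sum of the two terms, then invoke the LDP lower bound for each open set $G^1_\rho$ and $G^2_\rho$ separately and add the two resulting bounds.
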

\begin{proof}
It is straightforward that 
\begin{align*}
&\quad\mathbb{P}\Big(\Big|\sup_{t\in[0,T]}\|{u^{S,N,\epsilon}_{t,\underline{t}}}\|^2-\|u_0\|^2\Big|\ge\rho\Big)\\
&= \mathbb{P}\Big(\sup_{t\in[0,T]}\|{u^{S,N,\epsilon}_{t,\underline{t}}}\|^2\ge \|u_0\|^2+\rho\Big)+\mathbb{P}\Big(\sup_{t\in[0,T]}\|{u^{S,N,\epsilon}_{t,\underline{t}}}\|^2\leq \|u_0\|^2-\rho\Big)\\
&=:\mathcal{I}\mathcal{I}_1+\mathcal{I}\mathcal{I}_2.
\end{align*}
Note that $\{\omega: u^{S,N,\epsilon}_{\cdot,\underline{\cdot}}(\omega)\in G^1_{\rho}\}\subset \big\{\omega:\sup_{t\in[0,T]}\|{u^{S,N,\epsilon}_{t,\underline{t}}}(\omega)\|^2\ge \|u_0\|^2+\rho\big\}$ and $\{\omega: u^{S,N,\epsilon}_{\cdot,\underline{\cdot}}(\omega)\in G^2_{\rho}\}\subset \big\{\omega:\sup_{t\in[0,T]}\|{u^{S,N,\epsilon}_{t,\underline{t}}}(\omega)\|^2\leq \|u_0\|^2-\rho\big\}.$ Terms $\mathcal{I}\mathcal{I}_j$ can be estimated by the LDP upper bound (resp. the LDP lower bound) with the closed subset $F^j_{\rho}$ (resp. the open subset $G^j_{\rho}$) for $j=1,2$. \end{proof}

\bibliographystyle{plain}
\bibliography{adaptive_schr.bib}

\end{document}